%% file: pdl.tex
\documentclass[11pt]{article}

\input{includes/01pkg.tex}

\input{includes/02cmd.tex}
\input{includes/03thm.tex}

\input{includes/04sym.tex}

\hypersetup{
    colorlinks=true,
    citecolor=black,
    linkcolor=black,
    filecolor=black,
    urlcolor=black,
}

\pgfplotsset{compat=1.18}
\tikzset{snake it/.style={decorate, decoration={snake, amplitude=.3mm, segment length=2mm}}}

\allowdisplaybreaks

\title{Distributional Lifting for Discrete Optimization}
\author{
 Ningji Wei
    \thanks{
    Department of Industrial, Manufacturing and Systems Engineering, Texas Tech University, \texttt{ningji.wei@ttu.edu}. This work was partially supported by AFOSR grant FA9550-26-1-B345 and NSF grant 2548574.}
    \and
    Jiaming Liang
    \thanks{
    Goergen Institute for Data Science and Artificial Intelligence (GIDS-AI) and Department of Computer Science, University of Rochester, \texttt{jiaming.liang@rochester.edu}. This work was partially supported AFOSR grant FA9550-25-1-0182.}
}
\date{}

\begin{document}

\maketitle

\subfile{sections/0_abstract.tex}
\subfile{sections/1_intro.tex}

\subfile{sections/2_framework.tex}
\subfile{sections/3_instance.tex}
\subfile{sections/4_design.tex}
\subfile{sections/5_algo.tex}

\subfile{sections/6_implement.tex}
\subfile{sections/7_expr.tex}
\subfile{sections/8_conclusion.tex}

\newpage
\bibliographystyle{plainnat}
\bibliography{bibi/refs}

\newpage
\appendix
\renewcommand{\theHsection}{appendix.\Alph{section}}
\renewcommand{\theHsubsection}{appendix.\Alph{section}.\arabic{subsection}}
\subfile{sections/9_proof.tex}
\subfile{sections/10_fr.tex}

\subfile{sections/11_ls.tex}

\end{document}

%% file: includes/01pkg.tex
\usepackage[margin=1in]{geometry}
\usepackage{amsmath} 
\usepackage{amssymb} 
\usepackage{amsbsy} 
\usepackage{amsthm} 
\usepackage{amsfonts}
\usepackage{thmtools}
\usepackage{thm-restate}
\usepackage{bbm}
\usepackage{url}
\usepackage{booktabs}
\usepackage{mathrsfs}
\usepackage{multirow}
\usepackage{chngpage}
\usepackage{subcaption}
\usepackage{float}
\usepackage{subfiles}
\usepackage[math]{cellspace}
\usepackage[font={small}]{caption}
\usepackage{tikz, pgfplots}
\usepackage{algpseudocode}
\usepackage{graphicx}
\usepackage{algorithm}
\usepackage{fullpage}
\usepackage{chngcntr}
\usepackage[square,sort,comma,numbers]{natbib}
\usepackage{soul}
\usepackage{mathtools}
\usepackage{array}
\usepackage{arydshln}
\usepackage[letterspace=-85]{microtype}
\usepackage{nicefrac}
\usepackage[hypertexnames=false]{hyperref}

\usetikzlibrary{shapes.geometric,arrows,positioning,calc}
\usetikzlibrary{patterns}
\usetikzlibrary{decorations.pathmorphing,backgrounds,fit,matrix,decorations.pathreplacing}

%% file: includes/02cmd.tex
\newcommand*{\pfstart}{\begin{proof}}
\newcommand*{\pfend}{\end{proof}}

\tikzstyle{vertex} = [circle, minimum size=0.1cm, inner sep=0pt, draw=black, fill=black]
\tikzstyle{circ} = [circle, minimum width=0.5mm, inner sep=0pt,draw,fill]
\tikzstyle{hcirc} = [circle,minimum width=5mm, inner sep=0pt,draw]
\tikzstyle{bcirc} = [circle, minimum width=1.5mm, inner sep=0pt,draw,fill]
\tikzstyle{bhcirc} = [circle, minimum width=1.5mm, inner sep=0pt,draw, dotted ]
\tikzstyle{ept} = [circle,minimum width=0mm, inner sep=0pt, white]
\tikzstyle{txt} = [text width=1.3cm,draw,rounded corners=3pt]
\tikzstyle{ncirc} = [circle,draw=black, inner sep=1pt]

\DeclarePairedDelimiter\iprod{\langle}{\rangle}

\algrenewcommand\algorithmicindent{1em}

\algrenewcommand\algorithmicrequire{\textbf{Input:}}
\algrenewcommand\algorithmicensure{\textbf{Output:}}

%% file: includes/03thm.tex
\theoremstyle{remark}

\usepackage{bm}
\theoremstyle{remark}
\newtheorem*{claim*}{Claim}
\theoremstyle{remark}
\newtheorem*{remark*}{Remark}
\theoremstyle{remark}
\newtheorem{remark}{Remark}

\theoremstyle{plain}
\newtheorem{proposition}{Proposition}

\theoremstyle{plain}
\newtheorem{lemma}{Lemma}

\theoremstyle{definition}

\theoremstyle{definition}
\newtheorem*{assumption*}{Assumption}

\theoremstyle{definition}
\newtheorem*{example*}{Example}

\renewcommand\thmcontinues[1]{Cont.}

\theoremstyle{definition}
\newtheorem{definition}{Definition}

\theoremstyle{plain}
\newtheorem{theorem}{Theorem}

\theoremstyle{plain}
\newtheorem{corollary}{Corollary}

%% file: includes/04sym.tex
\newcommand{\E}{\mathbb{E}}
\newcommand{\Prob}{\mathbb{P}}

\newcommand{\CE}{\mathrm{CE}}

\DeclareMathOperator{\argmin}{argmin}

\newcommand{\grad}{\operatorname{grad}}
\newcommand{\diag}{\operatorname{diag}}

\newcommand{\KL}{\mathrm{KL}}
\newcommand{\TV}{\mathrm{TV}}

%% file: sections/0_abstract.tex
\begin{abstract}
We introduce a framework that transforms discrete optimization into continuous optimization over parameterized probability distributions. The framework reveals a common distributional structure behind several methods that have largely developed independently, including continuous reformulations for multilinear binary optimization, Markov decision processes and reinforcement learning, the cross-entropy method, mean-field variational optimization, and the quantum approximate optimization algorithm. It enables flexible and modular algorithm design, for example, by using neural networks to incorporate contextual information or by enforcing convexity of the lifted problem. We establish two theoretical results: a link between the lifted optimality gap and sampling quality in the original discrete problem, and a convergence guarantee for the lifted optimization problem under suitable conditions. These results lead to a unified algorithmic template based on batched sampling and stochastic gradient updates that is naturally compatible with GPU computation. We demonstrate the framework through three case studies spanning multidimensional knapsack, covering-constrained three-dimensional assignment, and a stochastic mixed-integer location--allocation problem, showing that it can accommodate distinct combinatorial structures and scale to large discrete and mixed-integer settings.

\medskip

\noindent\textbf{Keywords:} mixed-integer programming, decision-dependent stochastic optimization, continuous reformulation
\end{abstract}

%% file: sections/1_intro.tex
\section{Introduction}

Discrete optimization plays a central role in both the theory and practice of modern decision-making. Its modeling expressiveness encompasses many classical NP-hard problems, including the traveling salesman problem, set covering, and scheduling, while supporting diverse applications in energy systems, logistics, healthcare, and manufacturing. A persistent challenge, however, is the rapidly growing scale of practical problem instances. In power systems, for example, grid optimization problems that were traditionally below one hundred thousand variables now include formulations with hundreds of millions of discrete variables~\citep{pandey2023large}. As this growth in problem scale compounds the inherent combinatorial complexity of discrete optimization, computational scalability becomes increasingly critical.

One strategy for improving scalability is to recast discrete problems as continuous optimization, thereby enabling the use of continuous optimization tools. This principle appears across several research areas. For unconstrained binary problems with multilinear objectives, such as dominating set, vertex cover, and max-cut, classical probabilistic arguments yield exact continuous nonlinear reformulations~\citep{butenko2026continuous,harant1999dominating,balasundaram2005constructing,goring2009independence}. When a problem admits a Markov decision process formulation~\citep{bertsekas2012dynamic,puterman2014markov}, reinforcement learning is one approach that optimizes continuous policy parameters, often represented by neural networks, to obtain high-quality stochastic policies~\citep{sutton1998reinforcement}. Beyond such structure-specific settings, several more general sampling-based approaches have also emerged. The cross-entropy method iteratively concentrates a parameterized sampling distribution on elite solutions~\citep{rubinstein1999cross,de2005tutorial}, while mean field variational methods optimize factorized distributions through continuous variational objectives~\citep{berrones2013mean}. 
More recently, the GFORS framework has combined continuous probabilistic parameterizations with gradient-based updates for binary optimization~\citep{wei2025gfors}. 
Quantum approaches likewise optimize continuous parameters of quantum circuits that induce probability distributions over discrete solutions~\citep{gambella2020multiblock,peruzzo2014variational,farhi2014quantum,harwood2021formulating}.

Despite their different origins and largely independent development, these approaches share a common principle: their continuous reformulations operate over probability distributions on candidate solutions. This paper formalizes this common structure as a unified framework, termed \emph{parametric distributional lifting} (PDL), built around three designable components: a parameterized distribution family that enables tractable continuous optimization, a reference distribution that progressively concentrates on optimal solutions, and a distributional discrepancy between the two that defines the lifted continuous optimization problem.

Beyond providing a unified framework for analysis, PDL offers several additional benefits. 
First, it enables techniques developed in separate areas to be transferred across methods.
For example, within PDL, neural network parameterizations commonly used in reinforcement learning can be applied to general discrete optimization problems to incorporate contextual information (Section~\ref{sec:param}). A design principle generalized from the cross-entropy method can lift discrete problems into convex continuous formulations (Theorem~\ref{thm:conv}).
Second, this modularity opens a flexible algorithmic design space: each valid combination of the three components defines a distinct PDL algorithm variant with different trade-offs in expressiveness, tractability, and sampling guarantees.
Table~\ref{tab:cs1_expt1}, for instance, compares nine variants formed by three parameterizations and three distributional discrepancy choices. Third, all these variants can be formulated as decision-dependent stochastic optimization problems over continuous variables, leading to a unified algorithmic template based on batched sampling and stochastic gradient updates (Algorithm~\ref{algo:pdl}). Because both operations are naturally suited to high-throughput computation, the resulting methods are amenable to GPU implementation~\citep{lu2023cupdlp,liu2025scalable,liu2026gpu}.

We further establish two theoretical guarantees linking the lifted and original problems. First, we translate the optimality gap for the lifted problem into a sampling guarantee for the original discrete problem (Theorem~\ref{thm:sampling}). Second, we establish convergence of the lifted optimization problem under suitable conditions (Theorem~\ref{thm:pdl-convex-convergence}). Finally, we demonstrate the flexibility and scalability of PDL across distinct problem settings through three case studies: multidimensional knapsack, covering-constrained three-dimensional assignment, and a stochastic mixed-integer program (MIP) involving both discrete and continuous variables.

The remainder of the paper is organized as follows. Section~\ref{sec:framework} introduces the PDL framework and establishes its sampling guarantees.
Section~\ref{sec:instances} reviews related methods and interprets them as instances of PDL, while Section~\ref{sec:design} develops new PDL design principles.
Section~\ref{sec:algo} presents a unified PDL algorithm based on stochastic gradient updates and batched sampling, and Section~\ref{sec:imp} discusses practical implementation techniques. Section~\ref{sec:expr} evaluates PDL through three computational case studies spanning discrete and mixed-integer optimization. Finally, Section~\ref{sec:conc} concludes the paper and outlines future directions.
To streamline the exposition, mathematical proofs are deferred to Appendix~\ref{sec:proofs}, while an extension based on Riemannian gradient updates is presented in Appendix~\ref{sec:frupdate}.

\paragraph{Notation.}
For any $\mathcal X \subseteq \mathbb Z^n$, \(\mathcal P(\mathcal X)\) denotes the space of probability distributions over \(\mathcal X\), and \(\delta_x\) denotes the point mass at \(x\). The indicator function is denoted by \(\mathbb I(\cdot)\), with value \(1\) when the stated condition holds and \(0\) otherwise. We write \([n]:=\{1,2,\ldots,n\}\) and \([l,u]:=\{l,l+1,\ldots,u\}\). Let $H(\pi)=-\sum_{x \in \mathcal{X}} \pi(x) \log \pi(x)$ denote the Shannon entropy of a probability distribution $\pi$, \(\overline{\mathbb R}:= \mathbb R \cup \{\pm \infty\}\) denote the extended real line, \(\odot\) denote entrywise multiplication, and \(\diag(v)\) denote the diagonal matrix with diagonal \(v\). Given a nonnegative function \(w:\mathcal X\to\mathbb R_+\) satisfying \(Z:=\sum_{x\in\mathcal X} w(x)\in(0,\infty)\), \(\mathbb P(x)\propto w(x)\) denotes the distribution \(\mathbb P(x):=w(x)/Z\). Finally, define \((a)_+:=\max\{a,0\}\) and \(\mathrm{clip}(a,l,u):=\min\{\max\{a,l\},u\}\), and let \(\mathrm{Unif}(A)\) denote the uniform distribution on a finite set \(A\).

%% file: sections/2_framework.tex
\section{Parametric Distributional Lifting}
\label{sec:framework}

We consider the following general discrete optimization problem:
\begin{equation}
\label{eq:ip}
\begin{aligned}
  \min_{x \in \mathcal X \subseteq \mathbb Z^n}
  &~ f(x) \\
  \mathrm{s.t.}
  &~ g(x) \leq 0,
\end{aligned}
\end{equation}
where \(g=(g_j)_{j\in[m]}\) denotes the constraint functions. We represent~\eqref{eq:ip} through an unconstrained objective
\(\tilde f\colon\mathcal X\to \overline{\mathbb R}\), referred to as a \emph{surrogate}. The surrogate is required to preserve the ordering of solutions, i.e., \(\tilde f(x)\leq \tilde f(x')\) if and only if \(x\) is no worse than
\(x'\) for~\eqref{eq:ip}. In particular, one may consider
\[
  \tilde f(x)
  := f(x) + \max_{\nu \geq 0}\iprod{\nu,g(x)},
\]
which equals \(f(x)\) when \(x\) is feasible and \(+\infty\) otherwise. Therefore, \eqref{eq:ip} admits an exact unconstrained reformulation $\min _{x \in \mathcal{X}} \tilde{f}(x)$.

\begin{remark}
  Throughout the paper, we assume without loss of generality that \eqref{eq:ip} has a unique optimal solution $x^\ast \in \mathcal X$. When multiple optimal solutions exist, all statements carry over with $x^\ast$ replaced with the optimal solution set $\mathcal X^\ast$ and with $\delta_{x^\ast}$ replaced with any distribution supported on $\mathcal X^\ast$.
\end{remark}

The central idea is to reformulate the discrete problem~\eqref{eq:ip} as a continuous optimization problem over a parameterized family of probability distributions on $\mathcal{X}$. To formalize this construction, we first introduce the following definitions.

\begin{definition}[Distributional Discrepancy]
Given a discrete space $\mathcal X$, a \emph{discrepancy} is any function
$D: \mathcal P(\mathcal X) \times \mathcal P(\mathcal X) \rightarrow \mathbb R_+$ 
that is differentiable with respect to its first argument.
Let
$$\mathrm{TV}(\mathbb P, \mathbb Q) := \max_{\mathcal X' \subseteq \mathcal X} |\mathbb P(\mathcal X') - \mathbb Q(\mathcal X')|$$
denote the total variation distance. A dicrepancy $D$ is called \emph{regular} if $D(\mathbb P, \mathbb Q) = 0$ if and only if $\mathbb P = \mathbb Q$;
  and is called \emph{TV-dominant} if there exists a non-decreasing function
  $h$ with $h(0) = 0$ such that
  $\mathrm{TV}(\mathbb P, \mathbb Q) \leq h\big(D(\mathbb P, \mathbb Q)\big)$
  for all $\mathbb P, \mathbb Q \in \mathcal P(\mathcal X)$.
\end{definition}

Some examples of discrepancies used throughout this paper include the cross-entropy (CE), forward and reverse Kullback-Leibler (KL) divergences, and the reverse \(\chi^2\) divergence.
For a parametric distribution \(\pi_\theta\) and reference measure \(\Prob_\tau\), these take the forms
\begin{equation}
\label{eq:common-divergences}
\begin{aligned}
\CE(\pi_\theta\|\Prob_\tau)
&=\E_{x\sim\pi_\theta}\!\left[-\log \Prob_\tau(x)\right], &
\KL(\pi_\theta\|\Prob_\tau)
&=\E_{x\sim\pi_\theta}\!\left[\log\frac{\pi_\theta(x)}{\Prob_\tau(x)}\right],\\
\KL(\Prob_\tau\|\pi_\theta)
&=\E_{x\sim\pi_\theta}\!\left[
\frac{\Prob_\tau(x)}{\pi_\theta(x)}
\log\frac{\Prob_\tau(x)}{\pi_\theta(x)}
\right], &
\chi^2(\Prob_\tau\|\pi_\theta)
&=\E_{x\sim\pi_\theta}\!\left[
\left(\frac{\Prob_\tau(x)}{\pi_\theta(x)}-1\right)^2
\right].
\end{aligned}
\end{equation}
Except for CE, the other three divergences are regular discrepancies. 



\begin{definition}[Reference Measure]
  \label{def:reference}
Given a surrogate \(\tilde f\) with optimal solution \(x^\ast\), a \emph{reference measure} is a family of probability measures
\(
\mathbb P_{\tau, \tilde f}\in\mathcal P(\mathcal X)\) indexed by $\tau\in\mathbb R^k$.
It is called \emph{concentrated} if there exists \(\tau^\ast\in\overline{\mathbb R}^k\) such that
\(
\mathbb P_{\tau, \tilde f}\to\delta_{x^\ast}\)
as
\(
\tau\to\tau^\ast
\).
When \(\tilde f\) is clear from context, we write \(\mathbb P_\tau\) for simplicity.
\end{definition}

\begin{definition}[PDL Scheme]
  \label{def:pdl}
  Given a surrogate $\tilde f$, a PDL scheme
  $(\pi_\theta, D, \mathbb P_\tau)$ consists of
  (i)~a parameterized distribution family $\{\pi_\theta \in \mathcal P(\mathcal X) \mid \theta \in \Theta \subseteq \mathbb R^d\}$,
  (ii)~a discrepancy $D$, and
  (iii)~a reference measure $\mathbb P_\tau$.
  The associated \emph{lifted problem} is
  \begin{equation}
    \label{eq:pdl}
    \min_{\theta \in \Theta}\; D(\pi_\theta, \mathbb P_\tau).
  \end{equation}
\end{definition}

Intuitively, every PDL scheme provides a two-layer mechanism for driving \(\pi_\theta\) toward \(\delta_{x^\ast}\): first, the reference measure \(\mathbb P_\tau\) concentrates toward \(\delta_{x^\ast}\) as \(\tau\) tightens; second, \(\pi_\theta\) is drawn toward \(\mathbb P_\tau\) through the discrepancy minimization in~\eqref{eq:pdl}. In this way, the original discrete problem~\eqref{eq:ip} is transformed into the continuous optimization problem~\eqref{eq:pdl} over \(\Theta\), where each \(\theta\in\Theta\) induces a distribution \(\pi_\theta\) over \(\mathcal X\) from which candidate solutions can be sampled in batch.
This structure naturally leads to an algorithmic template that alternates between two steps:
\begin{itemize}
\item \textbf{Sampling}: draw samples from the current $\pi_\theta$ to
    update the incumbent solution;
\item \textbf{Optimization}: update \(\theta\) using samples to minimize \(D(\pi_\theta,\mathbb P_\tau)\) under a prescribed \(\tau\)-schedule.
\end{itemize}
If $\pi_\theta\left(x^*\right)>0$, repeated independent sampling from $\pi_\theta$ will eventually reach a global optimum $x^*$ almost surely. 
The optimization step therefore aims to increase the probability mass at $x^*$, ideally concentrating $\pi_\theta$ toward $\delta_{x^*}$, thus making the sampling process more efficient.



\subsection{PDL Sampling Guarantee}
The following theorem decomposes the sampling guarantee into two parts.
\begin{restatable}{theorem}{samplingtheorem}
  \label{thm:sampling}
  Suppose $D$ is TV-dominant with a non-decreasing $h$. If $D(\pi_\theta, \mathbb P_\tau) \leq \varepsilon$ at some $\theta \in \Theta$, then
  \[
    \pi_\theta(\mathcal X') \geq
    \mathbb P_\tau(\mathcal X') - h(\varepsilon)
  \]
  for every $\mathcal X' \subseteq \mathcal X$.
  In particular, taking $\mathcal X' = \mathcal X_\delta := \{x \in \mathcal X \mid \tilde f(x) - \tilde f(x^\ast) \leq \delta\}$ yields a lower bound on the probability of sampling a $\delta$-optimal solution from $\pi_\theta$.
\end{restatable}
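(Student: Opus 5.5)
The argument is a direct chaining of the definition of total variation with TV-dominance; there is no real construction involved. Fix $\theta\in\Theta$ with $D(\pi_\theta,\mathbb P_\tau)\le\varepsilon$ and an arbitrary $\mathcal X'\subseteq\mathcal X$.

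First, since $\mathrm{TV}$ is defined as a maximum over all subsets of $\mathcal X$, it dominates the discrepancy on the particular set $\mathcal X'$:
\[
\mathbb P_\tau(\mathcal X')-\pi_\theta(\mathcal X')\le |\pi_\theta(\mathcal X')-\mathbb P_\tau(\mathcal X')|\le \mathrm{TV}(\pi_\theta,\mathbb P_\tau).
\]
Second, TV-dominance gives $\mathrm{TV}(\pi_\theta,\mathbb P_\tau)\le h\big(D(\pi_\theta,\mathbb P_\tau)\big)$. Because $h$ is non-decreasing and $D(\pi_\theta,\mathbb P_\tau)\le\varepsilon$, this is at most $h(\varepsilon)$. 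Combining the two steps and rearranging yields $\pi_\theta(\mathcal X')\ge \mathbb P_\tau(\mathcal X')-h(\varepsilon)$.

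For the ``in particular'' clause, specialize to $\mathcal X'=\mathcal X_\delta$. Every element of $\mathcal X_\delta$ is by definition $\delta$-optimal for the surrogate $\tilde f$. Since $\tilde f$ preserves the ordering of solutions, these are exactly the near-optimal points one wants to sample. Hence $\pi_\theta(\mathcal X_\delta)\ge \mathbb P_\tau(\mathcal X_\delta)-h(\varepsilon)$ is a lower bound on the per-draw probability of obtaining a $\delta$-optimal solution. Two consequences are worth recording:
\begin{itemize}
\item If $\mathbb P_\tau$ is concentrated, then $\mathbb P_\tau(\mathcal X_\delta)\ge\mathbb P_\tau(\{x^\ast\})\to1$ as $\tau\to\tau^\ast$, so this bound approaches $1-h(\varepsilon)$.
\item Over $N$ independent draws, the probability of seeing at least one $\delta$-optimal solution is at least $1-(1-\mathbb P_\tau(\mathcal X_\delta)+h(\varepsilon))^N$, whenever the per-draw bound is positive.
\end{itemize}

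No step is a genuine obstacle. The only points needing care are that the inequality $|\pi_\theta(\mathcal X')-\mathbb P_\tau(\mathcal X')|\le\mathrm{TV}$ uses the subset-maximum form of $\mathrm{TV}$ given in the definition, and that monotonicity of $h$ is what allows replacing $D$ by its upper bound $\varepsilon$. The conclusion may be vacuous, i.e.\ the right-hand side negative, but that does not affect validity.
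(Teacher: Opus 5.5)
Your proof is correct and follows the paper's argument exactly: you bound $\mathbb P_\tau(\mathcal X')-\pi_\theta(\mathcal X')$ by $\mathrm{TV}(\pi_\theta,\mathbb P_\tau)$ through the subset-maximum definition, then apply TV-dominance and the monotonicity of $h$, then rearrange. Your two extra consequences (the limit under a concentrated reference and the $N$-draw bound) go beyond what the statement asks, but both are valid.
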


We call \(\mathbb P_\tau(\mathcal X_\delta)\) the \emph{concentration} term and \(h(\varepsilon)\) the \emph{mismatch} term. Thus, Theorem~\ref{thm:sampling} bridges the lifted continuous problem and the original discrete optimization by decomposing the sampling guarantee into these two components. The concentration term is governed by the choice of \(\mathbb P_\tau\), while the mismatch term is governed by the quality of solving~\eqref{eq:pdl} and the tightness of the TV-dominance function \(h\) associated with \(D\).
In particular, if \(\mathbb P_\tau\) concentrates on \(x^\ast\) and the lifted objective can be optimized to zero, then the probability of sampling a \(\delta\)-optimal solution converges to one for every \(\delta>0\).
We conclude this section with two propositions that establish bounds on the concentration and mismatch terms.


  \begin{restatable}{proposition}{boltzmannconcentration}
\label{prop:bolz}
Let $\mathbb P_\tau \propto e^{-\tilde f(x)/\tau}$ be the Boltzmann reference of some surrogate $\tilde f$, the associated concentration satisfies
\[
\mathbb P_\tau(\mathcal X_\delta)
=
\mathbb E_{\pi_\theta}
\left[
\frac{\mathbb P_\tau(x)}{\pi_\theta(x)}
\mathbb I(x\in\mathcal X_\delta)
\right]
\geq
\frac{1}{1+|\mathcal X|e^{-\delta/\tau}}.
\]
\end{restatable}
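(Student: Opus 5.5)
The statement has two parts: an identity that rewrites the concentration as an expectation under $\pi_\theta$, and a lower bound on that concentration. I will handle them separately.

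\textbf{The identity.} This is a change of measure. Assume $\pi_\theta(x)>0$ on the support of $\mathbb P_\tau$; this holds, for instance, when $\pi_\theta$ has full support, which is the implicit setting of the expectation formulas in~\eqref{eq:common-divergences}. Then
\[
\mathbb E_{\pi_\theta}\!\left[\frac{\mathbb P_\tau(x)}{\pi_\theta(x)}\mathbb I(x\in\mathcal X_\delta)\right]
=\sum_{x:\pi_\theta(x)>0}\mathbb P_\tau(x)\,\mathbb I(x\in\mathcal X_\delta)
=\mathbb P_\tau(\mathcal X_\delta).
\]
I will state this support assumption explicitly, since without it the equality may fail.

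\textbf{The lower bound.} Write $\mathbb P_\tau(x)=e^{-\tilde f(x)/\tau}/Z$ with $Z=\sum_{x}e^{-\tilde f(x)/\tau}$, and set $f^\ast:=\tilde f(x^\ast)$. Since $x^\ast\in\mathcal X_\delta$,
\[
\mathbb P_\tau(\mathcal X_\delta)=\frac{A}{A+B},
\qquad A:=\sum_{x\in\mathcal X_\delta}e^{-\tilde f(x)/\tau},\quad B:=\sum_{x\notin\mathcal X_\delta}e^{-\tilde f(x)/\tau}.
\]
First, $A\ge e^{-f^\ast/\tau}$, because the $x^\ast$ term alone contributes this much. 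Second, every $x\notin\mathcal X_\delta$ satisfies $\tilde f(x)>f^\ast+\delta$, so each term of $B$ is at most $e^{-f^\ast/\tau}e^{-\delta/\tau}$. Hence $B\le |\mathcal X\setminus\mathcal X_\delta|\,e^{-f^\ast/\tau}e^{-\delta/\tau}\le |\mathcal X|\,e^{-f^\ast/\tau}e^{-\delta/\tau}$.

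The map $t\mapsto t/(t+B)$ is increasing in $t$ and decreasing in $B$. Applying both bounds gives
\[
\mathbb P_\tau(\mathcal X_\delta)\ge\frac{e^{-f^\ast/\tau}}{e^{-f^\ast/\tau}+|\mathcal X|e^{-f^\ast/\tau}e^{-\delta/\tau}}=\frac{1}{1+|\mathcal X|e^{-\delta/\tau}}.
\]

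\textbf{Edge cases.} Infeasible points have $\tilde f=+\infty$, so they contribute $e^{-\infty/\tau}=0$ to $B$ and the bound on $B$ still holds. We need $f^\ast<\infty$, which holds because~\eqref{eq:ip} has an optimum. The argument also requires $\tau>0$ and $\mathcal X$ finite; both are implicit in the statement, since the bound involves $|\mathcal X|$.
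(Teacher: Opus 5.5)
Your proposal is correct and follows the paper's proof essentially step for step: the identity is the importance-sampling change of measure, and the bound comes from writing $\mathbb P_\tau(\mathcal X_\delta)$ as $A/(A+B)$, keeping only the $x^\ast$ term in $A$, and bounding each term of $B$ by $e^{-(\tilde f(x^\ast)+\delta)/\tau}$. Your explicit support assumption on $\pi_\theta$ and your edge-case remarks (infinite surrogate values, finite $\mathcal X$, $\tau>0$) are correct refinements that the paper leaves implicit; for the identity, positivity of $\pi_\theta$ on $\mathcal X_\delta$ alone already suffices.
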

 
The expectation identity provides an importance-sampling estimator of the concentration, while the inequality gives an explicit analytical lower bound.

\begin{restatable}{proposition}{tvdominance}
\label{prop:tvdominance}
The following discrepancies are TV-dominant: (i) forward KL,
\(D(\pi_\theta,\mathbb P_\tau)=\mathrm{KL}(\pi_\theta\|\mathbb P_\tau)\), with
\(h(\varepsilon)=\min\{\sqrt{\varepsilon/2},\sqrt{1-e^{-\varepsilon}}\}\);
(ii) forward $\chi^2$,
\(D(\pi_\theta,\mathbb P_\tau)=\chi^2(\pi_\theta\|\mathbb P_\tau)\), with
\(h(\varepsilon)=\sqrt{\varepsilon}/2\); and
(iii) CE,
\(D(\pi_\theta,\mathbb P_\tau)=\mathrm{CE}(\pi_\theta\|\mathbb P_\tau)\), with
\(h(\varepsilon)=1-e^{-\varepsilon}\).
\end{restatable}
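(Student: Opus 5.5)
We will treat the three discrepancies separately. In each case we bound $\mathrm{TV}(\pi_\theta,\mathbb P_\tau)$ by a known $f$-divergence inequality and then check that the resulting $h$ is non-decreasing with $h(0)=0$. Write $P=\pi_\theta$ and $Q=\mathbb P_\tau$. On a finite $\mathcal X$ we use $\mathrm{TV}(P,Q)=\tfrac12\sum_x|P(x)-Q(x)|$.

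\textbf{Forward KL.} For (i), Pinsker's inequality gives $\mathrm{TV}\le\sqrt{\KL(P\|Q)/2}$. The Bretagnolle--Huber inequality gives $\mathrm{TV}\le\sqrt{1-e^{-\KL(P\|Q)}}$. Both right-hand sides are non-decreasing in $\varepsilon$ and vanish at $0$, so their minimum is as well. This gives $h(\varepsilon)=\min\{\sqrt{\varepsilon/2},\sqrt{1-e^{-\varepsilon}}\}$.

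If a self-contained argument is wanted, Bretagnolle--Huber follows in two steps. First, Cauchy--Schwarz gives
\[
\Bigl(\sum_x\sqrt{P(x)Q(x)}\Bigr)^2\le\sum_x\min(P,Q)\sum_x\max(P,Q)=(1-\mathrm{TV})(1+\mathrm{TV})=1-\mathrm{TV}^2.
\]
Second, Jensen's inequality gives $\sum_x\sqrt{PQ}=\E_P\bigl[\sqrt{Q/P}\bigr]\ge e^{-\KL/2}$.

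\textbf{Forward $\chi^2$.} For (ii), note that $\chi^2(P\|Q)=\sum_x (P-Q)^2/Q$, since $\E_{x\sim Q}$ of the squared density ratio is the standard form. Cauchy--Schwarz then gives
\[
2\,\mathrm{TV}=\sum_x\frac{|P-Q|}{\sqrt Q}\sqrt Q\le\sqrt{\chi^2(P\|Q)}.
\]
Hence $h(\varepsilon)=\sqrt\varepsilon/2$. If $Q(x)=0$ at a point with $P(x)>0$, then $\chi^2=\infty$ and the bound is trivial.

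\textbf{Cross-entropy.} Part (iii) is the main obstacle, because CE is not regular and can be positive even when $P=Q$. The inequality must therefore hold in a way that exploits that CE small forces both distributions to be nearly point masses. Suppose $\CE(P\|Q)=\E_P[-\log Q]\le\varepsilon$. Jensen's inequality applied to the concave $\log$ gives
\[
\sum_x P(x)Q(x)=\E_P[Q]\ge \exp(\E_P\log Q)\ge e^{-\varepsilon}.
\]
It then suffices to show $\mathrm{TV}(P,Q)\le 1-\sum_x P(x)Q(x)$. Using $\min(P,Q)\ge PQ$ pointwise, since both lie in $[0,1]$, we get
\[
\mathrm{TV}=1-\sum_x\min(P(x),Q(x))\le 1-\sum_x P(x)Q(x)\le 1-e^{-\varepsilon}.
\]
Finally, $h(\varepsilon)=1-e^{-\varepsilon}$ is non-decreasing with $h(0)=0$. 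The only delicate point is to confirm the pointwise inequality $\min(a,b)\ge ab$ for $a,b\in[0,1]$, which holds because $ab\le a$ and $ab\le b$.
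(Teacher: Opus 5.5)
Your proposal is correct and follows essentially the same route as the paper: Pinsker and Bretagnolle--Huber for forward KL (you also supply a self-contained proof of the latter), Cauchy--Schwarz against $\sum_x Q(x)=1$ for forward $\chi^2$, and for CE the same two ingredients, namely Jensen's inequality for $\log$ and the pointwise bound $\min\{P(x),Q(x)\}\ge P(x)Q(x)$, which is equivalent to the paper's $\min\{1,Q(x)/P(x)\}\ge Q(x)$. The only difference is the order of these two steps in part (iii): you apply the pointwise bound first and then Jensen to $\mathbb E_P[Q]$, whereas the paper applies Jensen to $\mathbb E_P[\min\{1,Q/P\}]$ and then uses the pointwise bound inside the logarithm.
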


 Since \(\mathrm{TV}(\mathbb P,\mathbb Q)\) is symmetric by definition, the same functions \(h\) derived above also apply to the corresponding reverse divergences, such as
\(
D(\pi_\theta,\mathbb P_\tau)
=
\mathrm{KL}(\mathbb P_\tau\|\pi_\theta)
\)
and
\(
D(\pi_\theta,\mathbb P_\tau)
=
\chi^2(\mathbb P_\tau\|\pi_\theta).
\)


%% file: sections/3_instance.tex
\section{Existing Methods of PDL}
\label{sec:instances}

This section shows that several existing approaches, such as multilinear polynomial reformulations (MPR) for combinatorial optimization~\citep{butenko2026continuous,harant1999dominating,balasundaram2005constructing,goring2009independence}, cross-entropy method (CEM)~\citep{rubinstein1999cross,de2005tutorial}, mean-field variational (MFV) method~\citep{berrones2013mean,martin2001statistical}, Markov decision process (MDP) and reinforcement learning (RL)~\citep{todorov2006linearly,haarnoja2018soft}, the quantum approximate optimization algorithm (QAOA)~\citep{farhi2014quantum}, and the recently developed GFORS algorithm~\citep{wei2025gfors}, can all be viewed as instances of the PDL framework, with their respective schemes summarized in Table~\ref{tab:pdl}.
This perspective also yields new development for existing methods, including sampling guarantees and potential algorithmic modifications.

\begin{table}[!b]
\centering
\footnotesize
\renewcommand{\arraystretch}{1.3}
\begin{tabular}{@{}lcccc@{}}
\toprule
Method & Decisions & $\pi_\theta$ & $\mathbb{P}$ & $D$ \\
\midrule
MPR & Binary & Product-Bernoulli & Boltzmann & $\mathrm{CE}(\pi_\theta \| \mathbb{P})$ \\
CEM & Integer & General & Tail measure & $\mathrm{KL}(\mathbb{P} \| \pi_\theta)$ \\
MFV & Integer & Product-simplex & Boltzmann & $\mathrm{KL}(\pi_\theta \| \mathbb{P})$ \\
MDP/RL & Integer & Neural net & Boltzmann & $\mathrm{CE}(\pi_\theta \| \mathbb{P})$ \\
QAOA & Binary & Quantum circuit & Boltzmann & $\mathrm{CE}(\pi_\theta \| \mathbb{P})$ \\
GFORS$^{*}$ & Binary & Product-Bernoulli & Boltzmann & $\mathrm{CE}(\pi_\theta \| \mathbb{P}) + \rho' \operatorname{tr}(\operatorname{Cov}(\pi_\theta))$ \\
\bottomrule
\end{tabular}
\caption{\footnotesize PDL schemes of existing methods. 
Tail measure denotes $\mathbb{P}_{\tau,\theta'}(x) \propto \mathbb{I}(\tilde{f}(x) \leq \tau) \cdot \pi_{\theta'}(x)$.
$\mathrm{tr}(\mathrm{Cov}(\cdot))$
denotes the trace of the covariance matrix.
$^{*}$GFORS solves a lower approximation of the PDL objective;
see Section~\ref{sec:gfors}.}
\label{tab:pdl}
\end{table}

\subsection{Multilinear Polynomial Reformulation}
Several classical graph optimization problems (e.g., dominating set, vertex cover, independent set, max-cut) can be modeled as the following unconstrained binary multilinear programs~\citep{butenko2026continuous}
\begin{equation}
  \label{eq:mpr}
  \min_{x \in \{0,1\}^n}\; \tilde f(x)
  \;:=\; \min_{x \in \{0,1\}^n}\;\sum_{I \subseteq [n]}
        c_I \prod_{i \in I} x_i.
\end{equation}
The MPR method equivalently reformulates this problem as the following continuous optimization problem over the unit hypercube through a probabilistic argument~\citep{butenko2026continuous,harant1999dominating,balasundaram2005constructing,goring2009independence}
\begin{equation}
  \label{eq:mprlift}
  \min_{\theta \in [0,1]^n}\;
    \sum_{I \subseteq [n]} c_I \prod_{i \in I} \theta_i.
\end{equation}
MPR then applies continuous optimization methods directly to this reformulation.
The following proposition shows that MPR falls into the PDL framework.
\begin{restatable}{proposition}{mprpdlproposition}
\label{prop:mpr-pdl}
MPR reformulation is equivalent to the following PDL scheme
\begin{itemize}
  \item 
  \(\pi_\theta(x) := \prod_{i \in [n]} \theta_i^{x_i}(1-\theta_i)^{1-x_i}\) (product-Bernoulli parameterization);
  \item 
  \(D(\pi_\theta, \mathbb P):=\mathrm{CE}(\pi_\theta \| \mathbb P)\) (cross-entropy);
  \item 
  \(\mathbb P_\tau(x) := e^{-f(x)/\tau}/Z_\tau\) for some $\tau > 0$ (Boltzmann reference).
\end{itemize}
\end{restatable}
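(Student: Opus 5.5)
We prove the claim by computing the lifted objective $\mathrm{CE}(\pi_\theta\|\mathbb P_\tau)$ in closed form for the product-Bernoulli family and the Boltzmann reference. The goal is to show it equals $\tfrac1\tau$ times the MPR objective in \eqref{eq:mprlift}, plus a constant that does not depend on $\theta$. Minimizing the lifted problem \eqref{eq:pdl} over $\Theta=[0,1]^n$ then has the same minimizers as \eqref{eq:mprlift}, and its optimal value is an affine image of the MPR value. That is the sense of ``equivalent'' we intend. Here $f=\tilde f$ is the multilinear objective of \eqref{eq:mpr}.

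\textbf{Expanding the cross-entropy.} Since $-\log\mathbb P_\tau(x)=f(x)/\tau+\log Z_\tau$, the definition in \eqref{eq:common-divergences} gives
\[
\mathrm{CE}(\pi_\theta\|\mathbb P_\tau)=\tfrac1\tau\,\mathbb E_{x\sim\pi_\theta}[f(x)]+\log Z_\tau .
\]
The term $Z_\tau=\sum_{x\in\{0,1\}^n}e^{-f(x)/\tau}$ is finite and positive, because $\mathcal X=\{0,1\}^n$ is finite and $f$ is real-valued. It is also independent of $\theta$.

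\textbf{Evaluating the expectation.} By linearity,
\[
\mathbb E_{\pi_\theta}[f(x)]=\sum_I c_I\,\mathbb E_{\pi_\theta}\Big[\prod_{i\in I}x_i\Big].
\]
Under $\pi_\theta$ the coordinates $x_i$ are independent Bernoulli$(\theta_i)$. Hence $\mathbb E\big[\prod_{i\in I}x_i\big]=\prod_{i\in I}\mathbb E[x_i]=\prod_{i\in I}\theta_i$, where the empty product is $1$ and matches the constant term $c_\emptyset$. This yields $\mathbb E_{\pi_\theta}[f]=\sum_I c_I\prod_{i\in I}\theta_i$, which is exactly the objective of \eqref{eq:mprlift}.

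\textbf{Concluding.} The lifted objective is therefore a positive scaling ($\tau>0$) of the MPR objective shifted by the constant $\log Z_\tau$. So $\arg\min_{\theta\in[0,1]^n}\mathrm{CE}(\pi_\theta\|\mathbb P_\tau)$ coincides with the argmin of \eqref{eq:mprlift} for every $\tau>0$, and the two optimal values correspond. We should also check that the parameterization covers the intended domain. As $\theta$ ranges over $[0,1]^n$, $\pi_\theta$ ranges over all product-Bernoulli laws, including every point mass $\delta_x$ (take $\theta=x$). Consequently the lifted minimum equals $\tfrac1\tau\min_x f(x)+\log Z_\tau$, which recovers the exactness of the MPR reformulation.

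There is no real obstacle. The only points needing care are the conventions at $\theta_i\in\{0,1\}$ (take $0^0=1$ in $\pi_\theta$) and stating precisely what ``equivalent'' means, namely identical minimizers and optimal values related by the fixed affine map $v\mapsto v/\tau+\log Z_\tau$.
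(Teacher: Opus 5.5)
Your proof is correct and follows the paper's argument: expand $\mathrm{CE}(\pi_\theta\|\mathbb P_\tau)=\tfrac1\tau\mathbb E_{\pi_\theta}[f(x)]+\log Z_\tau$, then use coordinatewise independence and $\mathbb E_{\pi_\theta}[x_i]=\theta_i$ to identify the expectation with the multilinear objective in \eqref{eq:mprlift}, so the two problems agree up to the positive scaling $1/\tau$ and an additive constant. Your extra remarks are correct details that the paper leaves implicit: the $0^0=1$ convention at the boundary, and the fact that setting $\theta=x$ recovers every point mass $\delta_x$, so the lifted minimum equals $\tfrac1\tau\min_x f(x)+\log Z_\tau$.
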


A fundamental distinction between MPR and PDL lies in how the continuous reformulation is used. MPR treats~\eqref{eq:mprlift} as an exact reformulation of the original binary problem and seeks a continuous optimization method from which an optimal binary solution can be recovered. The distribution \(\pi_\theta\) therefore serves primarily as a probabilistic device for deriving the reformulation, rather than as an explicit sampling mechanism during the algorithm. In practice, the nonlinear problem~\eqref{eq:mprlift}  itself may be difficult to solve globally.

The PDL interpretation adds two useful capabilities. First, it uses \(\pi_\theta\) directly as a sampling distribution for incumbent discovery and improvement throughout the optimization process. Second, given an incumbent \(\theta^\ast\) for~\eqref{eq:mprlift} together with an estimate of the normalizing constant \(Z_\tau\), Theorem~\ref{thm:sampling} yields a probabilistic guarantee on the quality of solutions sampled from \(\pi_{\theta^\ast}\), as stated in the following corollary.

\begin{restatable}{corollary}{mprsamplingcorollary}
  \label{coro:mpr}
  Let $\theta\in[0,1]^n$ with objective value $z$ in \eqref{eq:mprlift}, define $\varepsilon:= z/\tau + \log Z_\tau$, then
  \[
    \pi_\theta(\mathcal{X}_\delta) \geq
    \mathbb E_{\pi_\theta}\!\left[
      \frac{\mathbb P(x)}{\pi_\theta(x)}
      \mathbb I(x \in \mathcal X_\delta)
    \right]
    - \min\left\{ 1- e^{- \varepsilon}, \sqrt{\frac{\varepsilon - H(\pi_\theta)}{2}}, \sqrt{1- e^{-(\varepsilon - H(\pi_\theta))}}\right\}.
  \]
\end{restatable}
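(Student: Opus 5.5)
The proof is a direct combination of Theorem~\ref{thm:sampling}, Proposition~\ref{prop:bolz}, and Proposition~\ref{prop:tvdominance}. The one thing to work out is how the MPR objective value $z$ relates to the relevant discrepancies.

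\textbf{Cross-entropy.} By Proposition~\ref{prop:mpr-pdl}, with $\pi_\theta$ product-Bernoulli and $\mathbb P=\mathbb P_\tau\propto e^{-\tilde f/\tau}$, we have
\[
\mathrm{CE}(\pi_\theta\|\mathbb P_\tau)=\mathbb E_{\pi_\theta}\big[\tilde f(x)/\tau+\log Z_\tau\big].
\]
Multilinearity and independence of the coordinates give $\mathbb E_{\pi_\theta}[\tilde f(x)]=\sum_I c_I\prod_{i\in I}\theta_i=z$, so $\mathrm{CE}(\pi_\theta\|\mathbb P_\tau)=z/\tau+\log Z_\tau=\varepsilon$.

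\textbf{Forward KL.} The decomposition $\mathrm{KL}(\pi_\theta\|\mathbb P_\tau)=\mathrm{CE}(\pi_\theta\|\mathbb P_\tau)-H(\pi_\theta)$ gives $\mathrm{KL}(\pi_\theta\|\mathbb P_\tau)=\varepsilon-H(\pi_\theta)$. This quantity is nonnegative, so the square roots in the statement are well defined.

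\textbf{Lower bound on $\pi_\theta(\mathcal X_\delta)$.} Apply Theorem~\ref{thm:sampling} twice with $\mathcal X'=\mathcal X_\delta$:
\begin{itemize}
\item with $D=\mathrm{CE}$, level $\varepsilon$, and $h(\varepsilon)=1-e^{-\varepsilon}$ from Proposition~\ref{prop:tvdominance}(iii);
\item with $D=$ forward KL, level $\varepsilon-H(\pi_\theta)$, and $h=\min\{\sqrt{\cdot/2},\sqrt{1-e^{-\cdot}}\}$ from Proposition~\ref{prop:tvdominance}(i).
\end{itemize}
Each application yields $\pi_\theta(\mathcal X_\delta)\ge \mathbb P_\tau(\mathcal X_\delta)-h$. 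Since both bounds hold, we may subtract the minimum of the three mismatch terms.

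\textbf{Concentration term.} Replace $\mathbb P_\tau(\mathcal X_\delta)$ by its importance-sampling form using the identity in Proposition~\ref{prop:bolz}. That identity is valid because the product-Bernoulli $\pi_\theta$ has full support whenever $\theta\in(0,1)^n$. When some $\theta_i\in\{0,1\}$, we either interpret the expectation with the ratio defined over the support of $\pi_\theta$, or argue by continuity.

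\textbf{Expected obstacle.} The main subtlety is the CE case. CE is not regular, yet Proposition~\ref{prop:tvdominance}(iii) is stated for it, so I would simply invoke that result. The other delicate points are the boundary-support issue above and the fact that $\tilde f$ may equal $+\infty$; I would handle both by restricting attention to points where $\mathbb P_\tau$ and $\pi_\theta$ are positive.
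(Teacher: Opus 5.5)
Your proposal is correct and follows the paper's proof. You identify $\mathrm{CE}(\pi_\theta\|\mathbb P_\tau)=\varepsilon$ via the multilinear computation of Proposition~\ref{prop:mpr-pdl} and use $\mathrm{KL}(\pi_\theta\|\mathbb P_\tau)=\varepsilon-H(\pi_\theta)$. You then apply Theorem~\ref{thm:sampling} with both TV-dominance functions from Proposition~\ref{prop:tvdominance}, keep the smaller mismatch, and rewrite the concentration term with the importance-sampling identity of Proposition~\ref{prop:bolz}.

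Your boundary remark is a small addition the paper skips, and it is harmless. When $\pi_\theta$ lacks full support, the expectation taken over the support of $\pi_\theta$ equals $\mathbb P_\tau(\mathcal X_\delta\cap\{\pi_\theta>0\})\le\mathbb P_\tau(\mathcal X_\delta)$, so the stated lower bound only weakens and still holds.
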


Note that the concentration term and the normalization constant \(Z_\tau\) can be estimated by importance sampling from \(\pi_\theta\), while the entropy \(H(\pi_\theta)\) is available in closed form under the product-Bernoulli parameterization
\(
H(\pi_\theta)
=
-\sum_{i=1}^n
\left[
\theta_i\log\theta_i
+
(1-\theta_i)\log(1-\theta_i)
\right],
\)
with the convention $0\log0=0$.

\subsection{Cross-Entropy Method}
\label{sec:cem}
CEM, originally developed for rare-event simulation~\citep{rubinstein1997optimization}, is a general sampling-based optimization approach. It begins with a parametric family
\(
\pi:\Theta\rightarrow\mathcal P(\mathcal X).
\)
Given a current parameter \(\theta'\in\Theta\) and a threshold \(\tau\), CEM defines the corresponding elite distribution by
\begin{equation}
  \label{eq:cemref}
\mathbb P_{\tau,\theta'}(x)
=
\frac{1}{Z_{\tau, \theta'}} \mathbb I\!\left(\tilde f(x)\leq\tau\right)\pi_{\theta'}(x),
\end{equation}
where $Z_{\tau, \theta'}$ is the normalization constant. 
In particular, let \(z^\ast\) denote the optimal value of \(\tilde f\), and suppose \(x^\ast\) is the unique optimizer with \(\pi_\theta(x^\ast)>0\). Then
\(
(\tau,\theta') \to (z^\ast,\theta)
\)
implies
\(
\mathbb P_{\tau,\theta'} \to \delta_{x^\ast},
\)
so \(\mathbb P_{\tau,\theta'}\) defines a valid concentrated reference measure.
The objective of CEM is then to minimize the reverse KL divergence
\begin{equation}
\label{eq:cem_kl}
\min_{\theta\in\Theta}
\mathrm{KL}\!\left(\mathbb P_{\tau,\theta'}\|\pi_\theta\right)
= 
\min_{\theta\in\Theta}
\frac{1}{Z_{\tau, \theta'}}\mathbb E_{\pi_{\theta'}}
\left[
-\mathbb I\!\left(\tilde f(x)\leq\tau\right)
\log\pi_\theta(x)
\right] - H(\mathbb P_{\tau, \theta'}),
\end{equation}
where the equality follows by expanding the KL divergence.

One particularly appealing feature is that~\eqref{eq:cem_kl} is convex in \(\theta\) whenever \(\pi_\theta(x)\) is log-concave in \(\theta\) for every \(x\). This brings the usual advantages of convex optimization, including global optimality of local minima, reliable numerical solution, and, in some cases, closed-form updates. For instance, define the \emph{product-simplex} parameterization as
\begin{equation}
  \label{eq:prodsimp}
\pi_\theta(x)
:=
\prod_{i\in[n]}\theta_{i,x_i},
\qquad
\theta_i\in\Delta_i
:=
\left\{
\theta_i\in\mathbb R_+^{[l_i, u_i]}
\;\middle|\;
\sum_{j \in [l_i, u_i]}\theta_{ij}=1
\right\},
\end{equation}
where $\theta_{i,x_i}$ is the probability of the $i$-th coordinate of $x$ being $x_i\in[l_i,u_i]$ under parameterization $\theta$.
Problem \eqref{eq:cem_kl} then admits the following closed-form solution~\citep{rubinstein1999cross,de2005tutorial}
\begin{equation}
  \label{eq:cemsol}
\theta^\ast_{ij}
=
\frac{A_{ij}}
{\sum_{j' \in [l_i,u_i]} A_{ij'}},
\qquad
A_{ij}
:=
\mathbb E_{\pi_{\theta'}}
\left[
\mathbb I\!\left(\tilde f(x)\leq\tau\right)
\mathbb I(x_i=j)
\right].
\end{equation}
In this setting, CEM initializes from \(\theta_0\) and iteratively samples from \(\pi_{\theta_k}\), sets \(\tau\) according to an empirical quantile of the sampled values, then updates \(\theta_{k+1}\) using the closed-form update~\eqref{eq:cemsol}. 

Thus, CEM can be viewed as a special instance of the PDL framework, with its components specified in the following proposition.  The proof is immediate from the preceding discussion and is therefore omitted.

\begin{proposition}
 CEM is equivalent to the following PDL scheme  
\begin{itemize}
  \item 
  A general parameterization \(\pi_\theta(x)\);
  \item 
  \(D(\pi_\theta, \mathbb P):=\mathrm{KL}( \mathbb P \| \pi_\theta)
  \) (reverse KL divergence);
  \item 
    \(\mathbb P_{\tau, \theta'}(x) := \mathbb I\!\left(\tilde f(x)\leq\tau\right) \pi_{\theta'}(x)/ Z_{\tau, \theta'}\) (Tail measure reference).
\end{itemize}
\end{proposition}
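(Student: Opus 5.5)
The plan is to check the three components of a PDL scheme in turn and then show that the CEM iteration is exactly the alternating template of Section~\ref{sec:framework}, applied to the lifted problem \eqref{eq:pdl} with these choices.

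\textbf{Reference measure.} Under the standing assumption $\pi_{\theta'}(x^\ast)>0$, the normalizer $Z_{\tau,\theta'}\geq \pi_{\theta'}(x^\ast)>0$ whenever $\tau\geq z^\ast$. Hence \eqref{eq:cemref} is a well-defined probability measure indexed by $(\tau,\theta')$. Concentration follows from finiteness of $\mathcal X$ and uniqueness of $x^\ast$. For $z^\ast\le\tau<\min_{x\neq x^\ast}\tilde f(x)$, the indicator keeps only $x^\ast$, so $\mathbb P_{\tau,\theta'}=\delta_{x^\ast}$ exactly, for every $\theta'$ with $\pi_{\theta'}(x^\ast)>0$. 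The remaining part of the limit then only requires continuity of $\theta'\mapsto\pi_{\theta'}(x^\ast)$, or no continuity at all, since the measure is already equal to $\delta_{x^\ast}$ on a neighborhood of $z^\ast$ in $\tau$. So Definition~\ref{def:reference} is satisfied with $\tau^\ast=(z^\ast,\theta)$.

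\textbf{Discrepancy.} $\mathrm{KL}(\mathbb P\|\pi_\theta)$ is nonnegative, and it is differentiable in $\pi_\theta$ wherever it is finite, namely when $\pi_\theta>0$ on the support of $\mathbb P$. Therefore it qualifies as a discrepancy; as already noted after \eqref{eq:common-divergences}, it is even regular.

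\textbf{Objective and update.} Expanding $\mathrm{KL}(\mathbb P_{\tau,\theta'}\|\pi_\theta)=\sum_x \mathbb P_{\tau,\theta'}(x)\log\mathbb P_{\tau,\theta'}(x)-\sum_x\mathbb P_{\tau,\theta'}(x)\log\pi_\theta(x)$ and substituting \eqref{eq:cemref} gives \eqref{eq:cem_kl}. The entropy term and $1/Z_{\tau,\theta'}$ do not depend on $\theta$, so the minimizers coincide with those of CEM's weighted log-likelihood $\mathbb E_{\pi_{\theta'}}[\mathbb I(\tilde f(x)\le\tau)\log\pi_\theta(x)]$. In the product-simplex case, a Lagrangian computation on each simplex $\Delta_i$ recovers \eqref{eq:cemsol}. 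The CEM loop has three steps: sample from $\pi_{\theta_k}$, set $\tau$ to an empirical quantile, and update. Setting $\theta'=\theta_k$, these are respectively the sampling step, the $\tau$-schedule, and the optimization step of the PDL template.

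\textbf{Main obstacle.} The delicate point is that the reference measure depends on the previous iterate $\theta'$, whereas Definition~\ref{def:reference} indexes only by $\tau$. I would handle this by treating $(\tau,\theta')$ jointly as the index, which is allowed since the index is an arbitrary $\tau\in\mathbb R^k$. I would also note the edge case $Z_{\tau,\theta'}=0$, which occurs when $\tau<z^\ast$ or when no sampled point is elite. This case is excluded by requiring $\tau\ge z^\ast$, which the empirical-quantile rule guarantees because $\tau$ is the value of a sampled point.
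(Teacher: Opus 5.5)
Your proposal is correct and follows essentially the same route as the paper, which gives no formal proof and relies on the preceding discussion. That discussion shows the tail measure \eqref{eq:cemref} concentrates as $(\tau,\theta')\to(z^\ast,\theta)$, expanding the reverse KL gives \eqref{eq:cem_kl}, the product-simplex case yields \eqref{eq:cemsol}, and the CEM loop matches the sampling / $\tau$-schedule / optimization template. Your additions (positivity of $Z_{\tau,\theta'}$, the explicit argument that $\mathbb P_{\tau,\theta'}=\delta_{x^\ast}$ for $\tau$ just above $z^\ast$, and indexing jointly by $(\tau,\theta')$) make precise what the paper only asserts, with one minor caveat: that argument needs $\mathcal X$ finite, whereas for infinite $\mathcal X$ you would instead use continuity of $\pi_{\theta'}$ from above as $\tau\downarrow z^\ast$.
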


%
%
%


Applying Theorem~\ref{thm:sampling} yields the following sampling guarantee for CEM.

\begin{restatable}{corollary}{cemsamplingcorollary}
\label{coro:cem}
Let \(\theta\) be any solution of~\eqref{eq:cem_kl} with objective value
\(\varepsilon\). Then
\[
\pi_\theta(\mathcal X_\delta)
\geq
\mathbb E_{\pi_{\theta'}}
\left[
\frac{\mathbb I\!\left(\tilde f(x)\leq\tau\right)}
{Z_{\tau,\theta'}}
\mathbb I(x\in\mathcal X_\delta)
\right]
-
\min\left\{
\sqrt{\frac{\varepsilon}{2}},
\sqrt{1-e^{-\varepsilon}}
\right\}.
\]
\end{restatable}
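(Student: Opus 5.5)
The plan is to apply Theorem~\ref{thm:sampling} with the reverse KL discrepancy $D(\pi_\theta,\mathbb P)=\mathrm{KL}(\mathbb P_{\tau,\theta'}\|\pi_\theta)$, evaluated at $\mathcal X'=\mathcal X_\delta$ and reference $\mathbb P_{\tau,\theta'}$ from~\eqref{eq:cemref}. The steps are: establish TV-dominance with an explicit $h$, rewrite the concentration term as an expectation under $\pi_{\theta'}$, and combine.

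\textbf{Step 1 (TV-dominance).} By Proposition~\ref{prop:tvdominance}(i), forward KL is TV-dominant with $h(\varepsilon)=\min\{\sqrt{\varepsilon/2},\sqrt{1-e^{-\varepsilon}}\}$. This $h$ is non-decreasing with $h(0)=0$. As remarked after that proposition, TV is symmetric, so
\[
\mathrm{TV}(\pi_\theta,\mathbb P_{\tau,\theta'})=\mathrm{TV}(\mathbb P_{\tau,\theta'},\pi_\theta)\le h\big(\mathrm{KL}(\mathbb P_{\tau,\theta'}\|\pi_\theta)\big).
\]
Hence the reverse KL is TV-dominant with the same $h$. The inequality only needs to hold at the pair $(\pi_\theta,\mathbb P_{\tau,\theta'})$, so the fact that the reference depends on $\theta'$ is harmless: $\theta'$ is fixed throughout, and the proof of Theorem~\ref{thm:sampling} is pointwise.

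\textbf{Step 2 (Theorem~\ref{thm:sampling}).} Since $\theta$ has objective value $\varepsilon$, we have $D(\pi_\theta,\mathbb P_{\tau,\theta'})=\varepsilon\le\varepsilon$. Theorem~\ref{thm:sampling} then gives
\[
\pi_\theta(\mathcal X_\delta)\ge \mathbb P_{\tau,\theta'}(\mathcal X_\delta)-h(\varepsilon).
\]
Equivalently, this follows directly from $|\pi_\theta(\mathcal X_\delta)-\mathbb P_{\tau,\theta'}(\mathcal X_\delta)|\le \mathrm{TV}$.

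\textbf{Step 3 (concentration term).} Using the definition~\eqref{eq:cemref},
\[
\mathbb P_{\tau,\theta'}(\mathcal X_\delta)=\sum_{x}\frac{\mathbb I(\tilde f(x)\le\tau)\,\pi_{\theta'}(x)}{Z_{\tau,\theta'}}\,\mathbb I(x\in\mathcal X_\delta)
=\mathbb E_{\pi_{\theta'}}\!\left[\frac{\mathbb I(\tilde f(x)\le\tau)}{Z_{\tau,\theta'}}\mathbb I(x\in\mathcal X_\delta)\right].
\]
Substituting this into Step 2 yields the claim.

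The only point needing care is Step 1, where the reversed argument order must be justified by symmetry of TV. One also needs $Z_{\tau,\theta'}>0$ so that the reference is well defined; this holds whenever the reference is defined at all, and is implicit in the statement. The rest is routine.
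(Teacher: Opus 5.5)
Your proposal is correct and follows the paper's proof. The mismatch term comes from the KL TV-dominance function of Proposition~\ref{prop:tvdominance}(i), carried over to the reverse argument order by the symmetry of TV and then plugged into Theorem~\ref{thm:sampling}; the concentration term \(\mathbb P_{\tau,\theta'}(\mathcal X_\delta)\) is rewritten as an expectation under \(\pi_{\theta'}\). Your direct summation in Step~3 is, if anything, slightly cleaner than the paper's importance-sampling identity, since it never divides by \(\pi_{\theta'}(x)\).
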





\subsection{Mean-Field Variational Method}
The mean-field approximation originated in statistical physics and was first applied to combinatorial optimization via mean-field annealing~\citep{peterson1989new,bilbro1988optimization}. The variational inference literature~\citep{jordan1999introduction,wainwright2008graphical} later formalized it as KL minimization over factorized distributions for approximate inference. To the best of our knowledge, \citet{berrones2013mean} connected these perspectives by casting integer optimization as an MFV problem.

Given a surrogate \(\tilde f\) and temperature \(\tau>0\), MFV minimizes the forward KL divergence between a product-simplex distribution \(\pi_\theta\) and the Boltzmann reference \(\mathbb P_\tau\)
\begin{equation}
\label{eq:mfv}
\min_{\theta\in\Theta}
\mathrm{KL}(\pi_\theta\|\mathbb P_\tau)
=
\min_{\theta\in\Theta}
\frac{1}{\tau}\mathbb E_{\pi_\theta}[\tilde f(x)]
-
H(\pi_\theta)
+
\log Z_\tau \Longleftrightarrow \min_{\theta\in\Theta}
\mathbb E_{\pi_\theta}[\tilde f(x)]
- \tau H(\pi_\theta),
\end{equation}
where the equivalence follows because \(\log Z_\tau\) is independent of \(\theta\). 
To optimize~\eqref{eq:mfv}, let
\(\phi(\theta):=\mathbb E_{\pi_\theta}[\tilde f(x)]-\tau H(\pi_\theta)\).
MFV applies the first-order stationarity condition
\(\nabla\phi(\theta)=0\) to derive fixed-point equations for iterative updates. Since~\eqref{eq:mfv} is generally nonconvex, this procedure targets stationary points and does not, in general, guarantee global optimality.

MFV also falls within the PDL framework, with its components summarized in the following proposition. The proof follows directly from the preceding discussion and is omitted.
\begin{proposition}
 MFV is equivalent to the following PDL scheme  
\begin{itemize}
  \item 
    \(\pi_\theta(x)\): product-simplex parameterization~\eqref{eq:prodsimp};
  \item 
  \(D(\pi_\theta, \mathbb P):=\mathrm{KL}(\pi_\theta  \| \mathbb P)
  \) (forward KL divergence);
  \item 
  \(\mathbb P_\tau(x)\): Boltzmann reference.
\end{itemize}
\end{proposition}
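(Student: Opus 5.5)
The proposition says that MFV is the PDL scheme with product-simplex parameterization~\eqref{eq:prodsimp}, forward KL discrepancy, and Boltzmann reference. The plan is to check two things. First, that each component is admissible in the sense of Definitions~\ref{def:reference} and~\ref{def:pdl}. Second, that the resulting lifted problem~\eqref{eq:pdl} coincides with the MFV objective~\eqref{eq:mfv} up to an additive constant and a positive scaling, so the two share minimizers and stationary points.

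\textbf{Admissibility.} The product-simplex family $\{\pi_\theta\}$ with $\Theta=\prod_i\Delta_i$ is a parameterized family in $\mathcal P(\mathcal X)$ for box-constrained $\mathcal X=\prod_i[l_i,u_i]$. Each $\pi_\theta(x)=\prod_i\theta_{i,x_i}$ is nonnegative and factorizes, so $\sum_x\pi_\theta(x)=\prod_i\sum_j\theta_{ij}=1$.

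Forward KL, $\KL(\pi_\theta\|\mathbb P_\tau)$, is a discrepancy: it is nonnegative by Gibbs' inequality, and it is differentiable in its first argument on the relative interior of the simplex (with the convention $0\log0=0$ at the boundary).

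For the Boltzmann reference, I verify it is concentrated with $\tau^\ast=0$. Write
\[
\mathbb P_\tau(x)=\Bigl(1+\sum_{x'\neq x^\ast}e^{-(\tilde f(x')-\tilde f(x^\ast))/\tau}\Bigr)^{-1}
\]
at $x=x^\ast$. Every gap $\tilde f(x')-\tilde f(x^\ast)$ is positive by uniqueness of $x^\ast$, so this tends to $1$ as $\tau\downarrow0$. Infeasible points contribute $e^{-\infty}=0$. If $\mathcal X$ is infinite, I would assume the partition function is finite; that is implicit in the definition.

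\textbf{Equivalence of objectives.} Substitute $\log\mathbb P_\tau(x)=-\tilde f(x)/\tau-\log Z_\tau$ into the KL expression from~\eqref{eq:common-divergences}. This gives
\[
\KL(\pi_\theta\|\mathbb P_\tau)=\tfrac1\tau\E_{\pi_\theta}[\tilde f]-H(\pi_\theta)+\log Z_\tau .
\]
Since $\log Z_\tau$ does not depend on $\theta$ and $\tau>0$, multiplying by $\tau$ yields $\phi(\theta)$ plus a constant. Hence the two problems share minimizers, and $\nabla_\theta$ of one is $\tau^{-1}$ times $\nabla_\theta$ of the other, so the MFV fixed-point equations $\nabla\phi=0$ are exactly the stationarity conditions of the lifted problem. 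For the product-simplex family one can also note that $H(\pi_\theta)=\sum_i H(\theta_i)$, which is the form MFV actually uses.

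\textbf{Main obstacle.} The only delicate point is infinite objective values. If $\pi_\theta$ puts mass on infeasible $x$, then $\tilde f=+\infty$ there, so both sides equal $+\infty$ and the identity holds in $\overline{\mathbb R}$. Stationarity then has to be read on the subset of $\Theta$ where the objective is finite. Apart from this, the argument is a direct computation.
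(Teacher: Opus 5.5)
Your proposal is correct and follows the paper's own (omitted) justification. That justification is exactly the expansion $\KL(\pi_\theta\|\mathbb P_\tau)=\tfrac{1}{\tau}\E_{\pi_\theta}[\tilde f(x)]-H(\pi_\theta)+\log Z_\tau$ in~\eqref{eq:mfv}, followed by dropping the $\theta$-independent constant $\log Z_\tau$ and rescaling by $\tau>0$; your extra admissibility checks (normalization of the product-simplex family, concentration of the Boltzmann reference as $\tau\downarrow 0$) and the remark on $+\infty$ values under the exact surrogate go beyond what the paper states but are consistent with it.
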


Similarly, the PDL perspective yields the following sampling guarantee for MFV. The proof is omitted, as it follows the same argument as Corollaries~\ref{coro:mpr} and~\ref{coro:cem}.

\begin{corollary}
\label{coro:mfv}
Let \(\theta\) be any solution of~\eqref{eq:mfv} with objective value
\(z_\tau\), define
\(
\varepsilon
:=
z_\tau/\tau
+
\log Z_\tau,
\)
then
\[
\pi_\theta(\mathcal X_\delta)
\geq
\mathbb E_{\pi_\theta}
\left[
\frac{\mathbb P_\tau(x)}{\pi_\theta(x)}
\mathbb I(x\in\mathcal X_\delta)
\right]
-
\min\left\{
1-e^{-(\varepsilon+H(\pi_\theta))},
\sqrt{\frac{\varepsilon}{2}},
\sqrt{1-e^{-\varepsilon}}
\right\}.
\]
\end{corollary}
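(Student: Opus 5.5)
The plan is to follow the template of Corollaries~\ref{coro:mpr} and~\ref{coro:cem}. We identify the exact value of the relevant discrepancies at $\theta$, apply Theorem~\ref{thm:sampling} once for each of two TV-dominant discrepancies (forward KL and CE), rewrite the concentration term via Proposition~\ref{prop:bolz}, and take the better of the mismatch bounds.

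\emph{First step: compute the discrepancy values.} I read $z_\tau$ as the value of the right-hand form of~\eqref{eq:mfv}, i.e.\ $z_\tau=\mathbb E_{\pi_\theta}[\tilde f(x)]-\tau H(\pi_\theta)$. This is the reading under which the stated $\varepsilon$ is correct. Since $\log\mathbb P_\tau(x)=-\tilde f(x)/\tau-\log Z_\tau$, expanding the definitions in~\eqref{eq:common-divergences} gives
\begin{equation*}
\mathrm{KL}(\pi_\theta\|\mathbb P_\tau)=\tfrac{1}{\tau}\mathbb E_{\pi_\theta}[\tilde f(x)]-H(\pi_\theta)+\log Z_\tau=\tfrac{z_\tau}{\tau}+\log Z_\tau=\varepsilon .
\end{equation*}
Because $\mathrm{CE}(\pi_\theta\|\mathbb P_\tau)=\mathrm{KL}(\pi_\theta\|\mathbb P_\tau)+H(\pi_\theta)$, we also get $\mathrm{CE}(\pi_\theta\|\mathbb P_\tau)=\varepsilon+H(\pi_\theta)$.

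\emph{Second step: apply Theorem~\ref{thm:sampling} to each discrepancy.} Take $\mathcal X'=\mathcal X_\delta$.
\begin{itemize}
\item With $D=\mathrm{KL}(\pi_\theta\|\mathbb P_\tau)$ and Proposition~\ref{prop:tvdominance}(i), we get $\pi_\theta(\mathcal X_\delta)\ge\mathbb P_\tau(\mathcal X_\delta)-\min\{\sqrt{\varepsilon/2},\sqrt{1-e^{-\varepsilon}}\}$.
\item With $D=\mathrm{CE}$ and Proposition~\ref{prop:tvdominance}(iii), we get $\pi_\theta(\mathcal X_\delta)\ge\mathbb P_\tau(\mathcal X_\delta)-(1-e^{-(\varepsilon+H(\pi_\theta))})$.
\end{itemize}
Theorem~\ref{thm:sampling} needs only TV-dominance, not regularity, so the CE bound is legitimate even though CE is not regular. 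Each of these is a valid lower bound, so the largest of them also holds. That largest bound is $\mathbb P_\tau(\mathcal X_\delta)$ minus the minimum of the three mismatch terms.

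\emph{Third step: rewrite the concentration term.} Use the expectation identity of Proposition~\ref{prop:bolz}, $\mathbb P_\tau(\mathcal X_\delta)=\mathbb E_{\pi_\theta}[\tfrac{\mathbb P_\tau(x)}{\pi_\theta(x)}\mathbb I(x\in\mathcal X_\delta)]$. This is just importance reweighting and requires $\pi_\theta$ to have full support on $\mathcal X_\delta$. The product-simplex family can assign zero mass to some points, so in general the identity should be read as an inequality: the expectation is at most $\mathbb P_\tau(\mathcal X_\delta)$. That direction is the harmless one for a lower bound on $\pi_\theta(\mathcal X_\delta)$. Moreover, if $\varepsilon<\infty$, then finite KL forces $\pi_\theta\ll\mathbb P_\tau$. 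The only genuine subtlety I expect is this bookkeeping: pinning down which form of the objective $z_\tau$ refers to, and handling the support issue. The rest is direct substitution.
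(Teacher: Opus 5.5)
Your proposal is correct and follows the same argument the paper intends (its proof is omitted with a pointer to Corollaries~\ref{coro:mpr} and~\ref{coro:cem}): identify $\mathrm{KL}(\pi_\theta\|\mathbb P_\tau)=\varepsilon$ and $\mathrm{CE}(\pi_\theta\|\mathbb P_\tau)=\varepsilon+H(\pi_\theta)$, apply Theorem~\ref{thm:sampling} with both TV-dominance functions from Proposition~\ref{prop:tvdominance}, keep the smaller mismatch term, and rewrite the concentration term via the importance-sampling identity of Proposition~\ref{prop:bolz}. Your remark that the identity becomes an inequality when $\pi_\theta$ lacks full support on $\mathcal X_\delta$, and that this inequality points in the harmless direction, is a small point of care the paper glosses over.
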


Note that
\(
\mathrm{CE}(\pi_\theta\|\mathbb P_\tau)
=
\varepsilon+H(\pi_\theta),
\)
so the cross-entropy TV bound gives the first term inside the minimum, which may tighten the bound.




\subsection{MDP and Reinforcement Learning}
We begin with a finite-horizon MDP with finite state space \(S\), finite action set \(A\), and horizon \(T\). Let
\(
\mathcal X:=\prod_{t=0}^{T-1}A^S
\)
denote the space of time-dependent deterministic policies
\(x=(x_t)_{t=0}^{T-1}\). Given an initial state distribution \(\mu_0\), the MDP can be written as the following discrete optimization problem
\begin{equation}
\label{eq:mdp}
\min_{x\in\mathcal X}
\mathbb E
\left[
\sum_{t=0}^{T-1}
\gamma^t c(s_t,x_t(s_t),s_{t+1})
\right],
\end{equation}
where $a_t=x_t(s_t)\in A$ is the action taken at state $s_t$, $c:S\times A\times S\to\mathbb R$ is the cost function, and $P(\cdot\mid s,a)$ is the transition probability. The expectation is taken over trajectories generated by $s_0\sim\mu_0$ and $s_{t+1}\sim P(\cdot\mid s_t,a_t)$, with discount factor $\gamma\in(0,1]$.

To solve~\eqref{eq:mdp}, classical methods exploit the Bellman equation through backward dynamic programming~\citep{bertsekas2012dynamic,puterman2014markov}.
When the state or policy space becomes large, RL commonly introduces a parameterized stochastic policy through a neural network \(z_\theta\). Given a feature vector \(\phi(t,i)\) associated with time \(t\) and state \(i\), the network outputs scores over the action set \(A\), which are converted into action probabilities by a softmax map. Specifically, for every \(x\in\mathcal X\),
\begin{equation}
  \label{eq:neuralpi}
\pi_\theta(x_{t,i}\mid\phi(t,i))
:=
\frac{
\exp\!\left([z_\theta(\phi(t,i))]_{x_{t,i}}\right)
}{
\sum_{a\in A}
\exp\!\left([z_\theta(\phi(t,i))]_a\right)
}
\qquad
\pi_\theta(x)
:=
\prod_{t=0}^{T-1}
\prod_{i\in S}
\pi_\theta(x_{t,i}\mid\phi(t,i)).
\end{equation}
Hence, \(\pi_\theta\) defines a neural distribution over time-dependent deterministic policies, parameterized by the network weights \(\theta\).
Together with the transition dynamics and initial distribution, the policy $\pi_\theta$ induces a distribution \(\mathbb P_\theta\) over trajectories
\(
\xi=(s_0,a_0,s_1,a_1,\ldots,a_{T-1},s_T),
\)
and RL optimizes the expected cost
\begin{equation}
\label{eq:rl}
\min_{\theta \in \Theta}\mathbb E_{\xi\sim\mathbb P_\theta}
\left[
\sum_{t=0}^{T-1}
\gamma^t c(s_t,a_t,s_{t+1})
\right].
\end{equation}
Various gradient-based methods, including policy gradient and actor-critic methods~\citep{sutton1999policy,konda1999actor}, have been developed to solve this continuous stochastic optimization problem. We refer to~\citet{sutton1998reinforcement} for a comprehensive treatment.

The following proposition summarizes the finite-horizon RL formulation~\eqref{eq:rl} as a PDL scheme for~\eqref{eq:mdp}. The proof follows directly from the preceding formulation and is omitted.

\begin{proposition}
  The RL objective~\eqref{eq:rl} is equivalent to the following PDL scheme for~\eqref{eq:mdp}.
\begin{itemize}
  \item $\pi_\theta$: neural network parameterization~\eqref{eq:neuralpi}; 
  \item $D(\pi_\theta, \mathbb P)$: cross-entropy;
  \item $\mathbb{P}_\tau(x)$: Boltzmann reference.
\end{itemize}
\end{proposition}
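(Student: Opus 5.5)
We will show that the RL problem~\eqref{eq:rl} and the cross-entropy lifted problem $\min_{\theta}\CE(\pi_\theta\|\mathbb P_\tau)$ have the same objective up to a positive scaling and an additive constant independent of $\theta$. The natural surrogate is the MDP objective itself. For $x\in\mathcal X$ define
\[
\tilde f(x):=\mathbb E\Big[\textstyle\sum_{t=0}^{T-1}\gamma^t c(s_t,x_t(s_t),s_{t+1})\Big],
\]
so that~\eqref{eq:mdp} reads $\min_{x\in\mathcal X}\tilde f(x)$, and take the Boltzmann reference $\mathbb P_\tau(x)=e^{-\tilde f(x)/\tau}/Z_\tau$. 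The cross-entropy then expands as
\[
\CE(\pi_\theta\|\mathbb P_\tau)=\mathbb E_{x\sim\pi_\theta}\big[\tilde f(x)/\tau+\log Z_\tau\big]=\tfrac1\tau\,\mathbb E_{x\sim\pi_\theta}[\tilde f(x)]+\log Z_\tau .
\]
Minimizing this over $\theta$ is therefore equivalent, for any $\tau>0$, to minimizing $\mathbb E_{x\sim\pi_\theta}[\tilde f(x)]$. This is the same computation that underlies Proposition~\ref{prop:mpr-pdl}.

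The main step is to show that $\mathbb E_{x\sim\pi_\theta}[\tilde f(x)]$ equals the trajectory expectation in~\eqref{eq:rl}. We couple the two sampling procedures.
\begin{itemize}
\item \emph{Procedure A:} draw a whole deterministic policy $x\sim\pi_\theta$, i.e., independent actions $x_{t,i}\sim\pi_\theta(\cdot\mid\phi(t,i))$ for every pair $(t,i)$. Then run the MDP under $x$, with the transition randomness independent of $x$.
\item \emph{Procedure B:} run the MDP and, at time $t$ in state $s_t$, draw $a_t\sim\pi_\theta(\cdot\mid\phi(t,s_t))$.
\end{itemize}
By the tower property, $\mathbb E_{x\sim\pi_\theta}[\tilde f(x)]$ is the expected discounted cost of the trajectory produced by Procedure A. Under A, the action actually used at time $t$ is $a_t=x_{t,s_t}$.

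We then argue by induction on $t$ that A and B induce the same joint law of $(s_0,a_0,\dots,s_T)$. Fix a history $(s_0,a_0,\dots,s_t)$. Its event depends on $x$ only through the coordinates $x_{u,s_u}$ for $u<t$, which all have time index $u<t$. The coordinate $x_{t,s_t}$ has time index $t$, so it is independent of this history, and its conditional law is $\pi_\theta(\cdot\mid\phi(t,s_t))$. This matches the action rule in B. The transition step $s_{t+1}\sim P(\cdot\mid s_t,a_t)$ is identical in both procedures.

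The time-indexing is what makes this work: since each $(t,i)$ pair has its own independent coordinate, revisiting a state never reuses an action coordinate. The conditional-independence bookkeeping in this induction is the only delicate step; everything else is direct.

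Since the trajectory laws agree, the RL objective equals $\mathbb E_{x\sim\pi_\theta}[\tilde f(x)]$. Hence
\[
\mathbb E_{\xi\sim\mathbb P_\theta}\Big[\textstyle\sum_{t}\gamma^t c(s_t,a_t,s_{t+1})\Big]=\tau\,\CE(\pi_\theta\|\mathbb P_\tau)-\tau\log Z_\tau ,
\]
and~\eqref{eq:rl} and the PDL scheme $(\pi_\theta,\CE,\mathbb P_\tau)$ have the same minimizers over $\Theta$.
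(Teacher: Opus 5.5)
Your proof is correct and follows the argument the paper has in mind; the paper omits its proof as immediate. Expand $\CE(\pi_\theta\|\mathbb P_\tau)=\tfrac1\tau\E_{\pi_\theta}[\tilde f(x)]+\log Z_\tau$ as in Proposition~\ref{prop:mpr-pdl}, then identify $\E_{\pi_\theta}[\tilde f(x)]$ with the trajectory expectation in~\eqref{eq:rl}. Your coupling argument makes that identification rigorous: the time-indexed product structure of~\eqref{eq:neuralpi} means no action coordinate is ever reused, which is the detail the paper leaves implicit.
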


The formulation also extends naturally to infinite-horizon discounted MDPs through finite-horizon truncation, as commonly used in simulation-based RL. For \(\gamma\in(0,1)\), truncating the discounted objective at horizon \(T\) yields a finite-horizon problem of the form~\eqref{eq:mdp}, with the remaining tail either omitted or approximated by a terminal value function. 

Although Theorem~\ref{thm:sampling} still yields a sampling guarantee in this setting, such a guarantee is often less central in RL applications. Many RL methods aim not merely to sample a deterministic policy from \(\pi_\theta\), but to optimize and deploy the stochastic policy represented by the parameterization itself.

\subsection{QAOA}
Several quantum algorithms have been developed for solving binary integer programs (BIPs)~\citep{gambella2020multiblock,peruzzo2014variational,farhi2014quantum,harwood2021formulating}. Among them, QAOA is a widely studied approach~\citep{farhi2014quantum}. In quantum methods, each binary vector \(x\in\mathcal X:=\{0,1\}^n\) is encoded as a computational basis state \(\lvert x\rangle\in\mathbb C^{2^n}\). 
A quantum state can be expressed as a linear combination \(
\lvert\psi\rangle
=
\sum_{x\in\mathcal X}\alpha_x\lvert x\rangle,
\)
with complex coefficients \(\alpha_x\in\mathbb C\) satisfying the normalization condition
\(
\sum_{x\in\mathcal X}|\alpha_x|^2=1.
\)
Hence, each quantum state $\lvert\psi\rangle$ induces a probability distribution over \(\mathcal X\) through
\(
\mathbb P(x\mid \lvert \psi \rangle)
=
|\alpha_x|^2
=
|\langle x\mid\psi\rangle|^2.
\)

Given an initial state \(\lvert\psi_0\rangle\), QAOA parameterizes quantum states as
\[
\lvert\psi(\theta)\rangle
:=
\prod_{\ell\in[p]}
e^{-i\beta_\ell H_M}
e^{-i\gamma_\ell H_C}
\lvert\psi_0\rangle,
\]
where
\(\theta=(\beta_\ell,\gamma_\ell)_{\ell\in[p]}\in\mathbb R^{2p}\),
and \(H_C\) and \(H_M\) are two predefined Hermitian matrices. Their Hermitian structure ensures that the corresponding matrix exponentials are unitary and therefore preserve the normalization of the quantum state. From the distributional perspective, this circuit defines a parameterized family of probability distributions
\(
\pi_\theta(x)
:=
|\langle x\mid\psi(\theta)\rangle|^2.
\)
QAOA then optimizes the expected objective value
\begin{equation}
\label{eq:qaoa}
\min_{\theta\in\Theta}
\sum_{x\in\mathcal X}
\tilde f(x)|\langle x\mid\psi(\theta)\rangle|^2
=
\min_{\theta\in\Theta}
\mathbb E_{\pi_\theta}[\tilde f(x)].
\end{equation}
Various gradient-free and gradient-based methods have been proposed for optimizing~\eqref{eq:qaoa}; see~\citet{blekos2024review} for a comprehensive survey.


The following proposition summarizes this discussion by identifying QAOA as a PDL scheme. The proof follows directly from the preceding formulation and is omitted.

\begin{proposition}
QAOA is equivalent to the following PDL scheme
\begin{itemize}
  \item 
  $\pi_\theta(x) := \mathbb P(x \mid \lvert \psi(\theta) \rangle) = |\iprod{x \mid \psi(\theta)}|^2$ (quantum circuit parameterization);
  \item 
  \(D(\pi_\theta, \mathbb P)\): cross-entropy;
  \item 
    \(\mathbb P_\tau(x)\): Boltzmann reference.
\end{itemize}
\end{proposition}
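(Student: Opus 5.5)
The proposition asserts that QAOA is the PDL scheme with quantum-circuit parameterization, cross-entropy discrepancy and Boltzmann reference. The plan is to show that the cross-entropy lifted problem~\eqref{eq:pdl} and the QAOA objective~\eqref{eq:qaoa} differ only by a positive scaling and an additive constant independent of $\theta$. Hence they have the same minimizers over $\Theta$, and the same iterates under any monotone-invariant or rescaled-step procedure. This mirrors the argument already used for Proposition~\ref{prop:mpr-pdl}, whose objective~\eqref{eq:mprlift} is also an expectation $\mathbb E_{\pi_\theta}[\tilde f(x)]$.

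\textbf{Step 1: $\pi_\theta$ is a valid family.} I first check that $\pi_\theta(x)=|\langle x\mid\psi(\theta)\rangle|^2$ lies in $\mathcal P(\mathcal X)$ for every $\theta\in\mathbb R^{2p}$. Each factor $e^{-i\beta_\ell H_M}$ and $e^{-i\gamma_\ell H_C}$ is unitary because $H_M,H_C$ are Hermitian. Therefore $\lvert\psi(\theta)\rangle$ has unit norm, and since $\{\lvert x\rangle\}$ is an orthonormal basis, $\sum_x\pi_\theta(x)=\|\psi(\theta)\|^2=1$. Differentiability in $\theta$ follows because matrix exponentials are smooth.

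\textbf{Step 2: compute the cross-entropy.} Take $\mathbb P_\tau(x)=e^{-\tilde f(x)/\tau}/Z_\tau$ with $\tau>0$. Then $-\log\mathbb P_\tau(x)=\tilde f(x)/\tau+\log Z_\tau$, and by~\eqref{eq:common-divergences}
\[
\CE(\pi_\theta\|\mathbb P_\tau)=\tfrac1\tau\,\mathbb E_{\pi_\theta}[\tilde f(x)]+\log Z_\tau .
\]
The factor $1/\tau$ is positive and $\log Z_\tau$ does not depend on $\theta$, so $\min_\theta \CE(\pi_\theta\|\mathbb P_\tau)$ is equivalent to $\min_\theta\mathbb E_{\pi_\theta}[\tilde f(x)]$, which is exactly~\eqref{eq:qaoa}. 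The equivalence holds for every $\tau>0$. Moreover, $\mathbb P_\tau\to\delta_{x^\ast}$ as $\tau\to0^+$ under the uniqueness assumption, so the Boltzmann reference is concentrated in the sense of Definition~\ref{def:reference}.

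\textbf{Main obstacle: infinite surrogate values.} The only delicate point is that $\tilde f$ may equal $+\infty$ on infeasible points. Then $\mathbb P_\tau(x)=0$ there, and both sides of the identity become $+\infty$ whenever $\pi_\theta$ charges such points. I would handle this with the convention $e^{-\infty}=0$, noting that the identity holds in $\overline{\mathbb R}$. The equivalence of minimizers is unaffected, provided $Z_\tau\in(0,\infty)$, which holds because $x^\ast$ is feasible.
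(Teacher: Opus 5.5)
Your proposal is correct and follows the paper's own argument. The paper omits this proof as immediate from the identity $\CE(\pi_\theta\|\mathbb P_\tau)=\frac{1}{\tau}\mathbb E_{\pi_\theta}[\tilde f(x)]+\log Z_\tau$, the same computation used for Proposition~\ref{prop:mpr-pdl}, which shows that the lifted problem coincides with \eqref{eq:qaoa} up to a positive scaling and a $\theta$-independent constant; your additional checks (unitarity giving a valid distribution, concentration as $\tau\to0^+$, and the extended-real convention for infeasible points) are sound and only make explicit what the paper leaves implicit.
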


The same choice of discrepancy and reference measure yields a sampling guarantee for QAOA analogous to Corollary~\ref{coro:mpr}. Its practical evaluation, however, is more challenging because the importance-sampling bound requires the point probabilities
\(\pi_\theta(x)=|\langle x\mid\psi(\theta)\rangle|^2\), which generally lack an efficient exact evaluation.

\subsection{GFORS for Binary Integer Programs}
\label{sec:gfors}
GFORS, a GPU-accelerated algorithm that combines first-order updates with randomized sampling, was introduced in~\citep{wei2025gfors} for structured BIPs of the form
\begin{equation}
  \label{eq:bip}
\begin{aligned}
  \min_{x \in \mathcal X \subseteq \{0,1\}^n}  &~ \iprod{x, Qx} + \iprod{c, x}\\
  \mathrm{s.t.} &~ Kx + r \leq 0.
\end{aligned}
\end{equation}
GFORS lifts this discrete problem into continuous parameter space by solving the saddle-point formulation
\begin{equation}
  \label{eq:gfors-saddle}
  \min_{\theta \in [0,1]^n} \max_{u \geq 0} \iprod{\theta, Q\theta} + \iprod{c, \theta} + \iprod{u, K\theta + r} + \rho \iprod{\theta, 1-\theta},
\end{equation}
corresponding to the Lagrangian of the linear relaxation of~\eqref{eq:bip}, augmented with a fractional penalization term weighted by $\rho > 0$ that drives iterates toward integral solutions. The continuous problem is solved via first-order primal-dual methods (e.g., PDHG~\citep{chambolle2011first}) on GPU, while feasible binary solutions are extracted through a randomized sampling subroutine that interprets each iterate $\theta$ as the parameter of a product Bernoulli distribution.
The following proposition interprets GFORS as an approximation to a PDL scheme for~\eqref{eq:bip}.

\begin{restatable}{proposition}{gforspdlproposition}
\label{prop:gfors-pdl}
Formulation~\eqref{eq:gfors-saddle} is a lower approximation of the following PDL scheme for~\eqref{eq:bip}:
\begin{itemize}
  \item \(\pi_\theta\): product-Bernoulli parameterization;
  \item \(D(\pi_\theta,\mathbb P)
  :=\mathrm{CE}(\pi_\theta\|\mathbb P)
  +\rho'\operatorname{tr}(\operatorname{Cov}(\pi_\theta))\);
  \item \(\mathbb P_\tau(x)\): Boltzmann reference.
\end{itemize}
\end{restatable}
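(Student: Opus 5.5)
The plan is to compute the PDL objective in closed form under the product-Bernoulli parameterization and compare it term by term with the saddle objective~\eqref{eq:gfors-saddle}. Here ``lower approximation'' means that, after a positive rescaling and an additive constant independent of \(\theta\), the saddle objective is pointwise no larger than the PDL objective.

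\emph{Step 1: expand the cross-entropy.} Take the surrogate \(\tilde f(x)=\iprod{x,Qx}+\iprod{c,x}+\max_{u\ge 0}\iprod{u,Kx+r}\) and the Boltzmann reference \(\mathbb P_\tau\propto e^{-\tilde f/\tau}\). Then
\[
\CE(\pi_\theta\|\mathbb P_\tau)=\tfrac1\tau\,\E_{\pi_\theta}[\tilde f(x)]+\log Z_\tau .
\]
Multiply the PDL objective by \(\tau\) and drop the constant \(\tau\log Z_\tau\). This leaves
\[
\E_{\pi_\theta}[\iprod{x,Qx}]+\iprod{c,\theta}+\E_{\pi_\theta}\big[\max_{u\ge0}\iprod{u,Kx+r}\big]+\tau\rho'\operatorname{tr}(\operatorname{Cov}(\pi_\theta)).
\]
The linear term uses \(\E_{\pi_\theta}[x]=\theta\).

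\emph{Step 2: evaluate the moments of the product-Bernoulli law.} The coordinates are independent and satisfy \(x_i^2=x_i\), so \(\operatorname{Cov}(\pi_\theta)=\diag(\theta\odot(1-\theta))\). Hence \(\operatorname{tr}(\operatorname{Cov}(\pi_\theta))=\iprod{\theta,1-\theta}\). For the same reason,
\[
\E_{\pi_\theta}[\iprod{x,Qx}]=\iprod{\theta,Q\theta}+\sum_i Q_{ii}\,\theta_i(1-\theta_i).
\]

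\emph{Step 3: bound the constraint term.} This is the only inequality in the argument. An expectation of a pointwise maximum dominates the maximum of expectations, and \(\iprod{u,Kx+r}\) is linear in \(x\). Therefore
\[
\E_{\pi_\theta}\big[\max_{u\ge0}\iprod{u,Kx+r}\big]\;\ge\;\max_{u\ge0}\iprod{u,K\theta+r}.
\]
This is where I expect the main care to be needed, because the left side may equal \(+\infty\) whenever \(\pi_\theta\) charges an infeasible point. In that case the inequality still holds in \(\overline{\mathbb R}\), so I would state the argument in extended reals rather than try to avoid the case.

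\emph{Step 4: match the penalty weights.} Combining Steps 1--3, the rescaled PDL objective is bounded below by
\[
\iprod{\theta,Q\theta}+\iprod{c,\theta}+\max_{u\ge0}\iprod{u,K\theta+r}+\sum_i\big(Q_{ii}+\tau\rho'\big)\theta_i(1-\theta_i).
\]
Each \(\theta_i(1-\theta_i)\ge0\) on \([0,1]^n\), so the last sum is at least \((\tau\rho'+\min_i Q_{ii})\iprod{\theta,1-\theta}\). I would choose \(\rho':=(\rho-\min_iQ_{ii})_+/\tau\). Then \(\tau\rho'+\min_i Q_{ii}\ge\rho\), and the bound is at least the inner objective of~\eqref{eq:gfors-saddle} for every \(\theta\in[0,1]^n\). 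This yields
\[
\tau\big(D(\pi_\theta,\mathbb P_\tau)-\log Z_\tau\big)\;\ge\;\max_{u\ge0}\Big\{\iprod{\theta,Q\theta}+\iprod{c,\theta}+\iprod{u,K\theta+r}+\rho\iprod{\theta,1-\theta}\Big\},
\]
which is the claimed lower approximation. The gap consists of the Jensen gap from Step 3 and the slack from replacing each \(Q_{ii}\) by \(\min_i Q_{ii}\); it vanishes at integral \(\theta\).
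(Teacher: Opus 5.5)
Your proof is correct and follows essentially the same route as the paper's. Both arguments expand the cross-entropy under the Boltzmann reference, use the product-Bernoulli moments $\operatorname{tr}(\operatorname{Cov}(\pi_\theta))=\iprod{\theta,1-\theta}$ and $\E_{\pi_\theta}[xx^\top]=\theta\theta^\top+\diag(\theta\odot(1-\theta))$, apply the expectation-of-max inequality to the constraint term, and lower-bound the diagonal penalties by $\min_i Q_{ii}$. The only differences are minor refinements: you keep a general $\tau$ and rescale instead of fixing $\tau=1$, and you choose $\rho'=(\rho-\min_i Q_{ii})_+/\tau$ from the given $\rho$, which keeps $\rho'\ge 0$, rather than defining $\rho:=\rho'+Q_{\min}$.
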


Since~\eqref{eq:gfors-saddle} provides only a lower bound on the PDL objective, its solution does not control \(D(\pi_\theta,\mathbb P_\tau)\) from above and therefore does not directly yield a sampling guarantee through Theorem~\ref{thm:sampling}.

%% file: sections/4_design.tex
\section{Design Space of PDL}
\label{sec:design}
As illustrated by the existing methods, the PDL framework provides a principled design space for developing new algorithms. A suitable combination of a reference measure \(\mathbb P_\tau\) (e.g., tail or Boltzmann), a discrepancy \(D\) (e.g., KL divergence), and a parameterization \(\pi_\theta\) (e.g., product-simplex or neural network) can yield (i) a continuous lifted problem that supports batched sampling and (ii) sampling guarantees through Theorem~\ref{thm:sampling}. Rather than exhaust all possible combinations, this section develops several representative design patterns along flexible  \(\mathbb P_\tau\), \(D\), and \(\pi_\theta\) that lead to structurally distinct PDL schemes. In particular, Theorem~\ref{thm:conv} provides a joint design principle for the discrepancy and parameterization that ensures convexity of the lifted problem.

\subsection{General Reference Measure via Weight Function}
A general class of reference measures can be induced through the following weight function construction.
\begin{definition}[Weight Function]
A weight function is any non-increasing function
\(w:\overline{\mathbb R}\to\mathbb R_+\)
such that
\(
Z:=\sum_{x\in\mathcal X} w(\tilde f(x))\in(0,+\infty).
\)
The induced reference measure is
\(
\mathbb P(x):=w(\tilde f(x))/Z.
\)
\end{definition}

In particular, the following stretched exponential family provides a flexible class of weight functions.

\begin{definition}[Stretched Exponential Weight]
\label{def:stretched-weight}
For \(p \geq 1\) and \(\tau>0\), the stretched exponential weight function is defined by
\(
w_{\tau,p}(\alpha)
:= e^{-(\alpha/\tau)^p}\)
with $\alpha \geq 0$.
\end{definition}

\begin{figure}[!t]
\centering
\input{figures/stretchexp.tex}
\caption{Stretched exponential weight function $w_{\tau,p}(\alpha) = e^{-(\alpha/\tau)^p}$
  for varying~$p \in \{1, 2, 4, 8, \infty\}$, with $\tau$ fixed.
As $p$ increases from~$1$ (Boltzmann) toward infinity (tail indicator),
the weight transitions from exponential decay
to a sharp cutoff at $\alpha = \tau$.
All curves coincide at $w_{\tau,p}(0) = 1$ and $w_{\tau,p}(\tau) = e^{-1}$.} 
\label{fig:stretched}
\end{figure}

Suppose the surrogate \(\tilde f\) has been normalized so that \(\tilde f(x)\geq0\) for all \(x\in\mathcal X\). The stretched exponential family then provides a smooth transition between Boltzmann weighting and hard truncation. 
As illustrated in Figure~\ref{fig:stretched}, \(p=1\) recovers the Boltzmann weight, while as \(p\to\infty\), \(w_{\tau,p}\) approaches hard truncation at \(\tau\), except at the boundary \(w_{\tau,p}(\tau)=e^{-1}\).
Moreover, this class of reference measures admits an explicit lower bound on the concentration term, as established in the following proposition.

\begin{restatable}{proposition}{stretchedconcentration}\label{prop:weibull}
  Suppose $\tilde{f}(x) \geq 0$ for all $x \in \mathcal X$. Let $\mathbb{P}_\tau$ be the reference induced by the stretched exponential weight $w_{\tau,p}(\alpha) = e^{-(\alpha/\tau)^p}$ for some $p \geq 1$. Then
  \[
    \mathbb{P}_\tau(\mathcal{X}_\delta)
    = \mathbb{E}_{\pi_{\theta}}\!\left[
      \frac{e^{-(\tilde{f}(x)/\tau)^p}}{Z_\tau \cdot \pi_{\theta}(x)}
      \,\mathbb{I}(x \in \mathcal{X}_\delta)
    \right]
    \geq \frac{1}{1 + |\mathcal{X}|\, e^{-(\delta/\tau)^p}}.
  \]
\end{restatable}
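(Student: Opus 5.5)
The plan is to prove the identity first and then the inequality by a direct ratio argument, with superadditivity of \(t\mapsto t^p\) playing the role that shift-invariance plays in the Boltzmann case (Proposition~\ref{prop:bolz}).

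\textbf{The identity.} This is the standard importance-sampling rewrite. Write
\[
\mathbb P_\tau(\mathcal X_\delta)=\sum_{x\in\mathcal X_\delta} e^{-(\tilde f(x)/\tau)^p}/Z_\tau .
\]
Multiply and divide each term by \(\pi_\theta(x)\) and recognize the sum as an expectation under \(\pi_\theta\). For this I will assume, as is implicit in the statement, that \(\pi_\theta(x)>0\) wherever \(e^{-(\tilde f(x)/\tau)^p}>0\) on \(\mathcal X_\delta\). Otherwise the identity should be read with the integrand \(0/0:=0\) and becomes an inequality. I will state this support condition explicitly.

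\textbf{The inequality: reduction to a ratio.} Set \(w(\alpha)=e^{-(\alpha/\tau)^p}\) and \(f^\ast:=\tilde f(x^\ast)\). I assume \(f^\ast<\infty\), i.e. the problem is feasible, so that \(x^\ast\in\mathcal X_\delta\). Split \(Z_\tau=A+B\), where \(A\) is the weight on \(\mathcal X_\delta\) and \(B\) the weight on its complement. Then \(\mathbb P_\tau(\mathcal X_\delta)=1/(1+B/A)\), so it suffices to show \(B/A\le |\mathcal X|e^{-(\delta/\tau)^p}\).
\begin{itemize}
\item \emph{Lower bound on \(A\):} \(A\ge w(f^\ast)=e^{-(f^\ast/\tau)^p}\).
\item \emph{Upper bound on \(B\):} every \(x\notin\mathcal X_\delta\) has \(\tilde f(x)>f^\ast+\delta\). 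Since \(w\) is non-increasing on \([0,\infty]\) (with \(w(+\infty)=0\), covering infeasible points), \(B\le|\mathcal X|\,e^{-((f^\ast+\delta)/\tau)^p}\).
\end{itemize}
Combining,
\[
\frac{B}{A}\le |\mathcal X|\exp\!\Big(-\tau^{-p}\big[(f^\ast+\delta)^p-(f^\ast)^p\big]\Big).
\]

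\textbf{Main step: superadditivity.} The only nontrivial point is \((f^\ast+\delta)^p-(f^\ast)^p\ge\delta^p\) for \(f^\ast,\delta\ge0\) and \(p\ge1\). This is where the hypothesis \(\tilde f\ge0\) is used. It follows because \(\phi(t)=t^p\) is convex with \(\phi(0)=0\), hence superadditive: \(\phi(a+b)\ge\phi(a)+\phi(b)\). One way to see this is that \(\phi(t)/t\) is non-decreasing, so
\[
\phi(a)\le \tfrac{a}{a+b}\,\phi(a+b),\qquad \phi(b)\le\tfrac{b}{a+b}\,\phi(a+b),
\]
and adding the two gives the claim. Substituting into the bound on \(B/A\) gives \(B/A\le|\mathcal X|e^{-(\delta/\tau)^p}\), and therefore
\[
\mathbb P_\tau(\mathcal X_\delta)\ge \frac{1}{1+|\mathcal X|e^{-(\delta/\tau)^p}}.
\]

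The count \(|\mathcal X|\) could be sharpened to \(|\mathcal X\setminus\mathcal X_\delta|\), which is consistent with the stated bound.
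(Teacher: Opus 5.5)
Your proposal is correct and follows essentially the same route as the paper: an importance-sampling rewrite for the identity, then bounding the weight on $\mathcal X_\delta$ below by $w_{\tau,p}(\tilde f(x^\ast))$ and the weight off $\mathcal X_\delta$ above by $|\mathcal X|\,w_{\tau,p}(\tilde f(x^\ast)+\delta)$, and closing with $(\alpha+\delta)^p-\alpha^p\ge\delta^p$. Your explicit support condition on $\pi_\theta$ for the identity and your justification of superadditivity via monotonicity of $t^{p-1}$ are small clarifications that the paper leaves implicit.
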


This design flexibility yields a distinct PDL variant for each choice of weight function \(w_{\tau,p}\). In particular, the convexity argument and closed-form characterization of CEM in~\eqref{eq:cemsol} remain unchanged across weight functions. Thus, replacing the CEM reference measure~\eqref{eq:cemref} with a stretched-exponential reference yields the following closed-form update:
\[
\theta^\ast_{ij}
=
\frac{A_{ij}}
{\sum_{j'\in[l_i,u_i]} A_{ij'}},
\qquad
A_{ij}
:=
\mathbb E_{\pi_{\theta'}}
\left[
\frac{
e^{-(\tilde f(x)/\tau)^p}
}{
\pi_{\theta'}(x)
}
\mathbb I(x_i=j)
\right].
\]
Compared with the classical CEM update~\eqref{eq:cemsol}, this variant differs in two ways. First, the importance ratio \(e^{-(\tilde f(x)/\tau)^p}/\pi_{\theta'}(x)\) corrects for mismatch between the sampling distribution and the target reference: samples underrepresented by \(\pi_{\theta'}\) relative to their reference weight are amplified, while overrepresented samples are downweighted. Second, for finite \(p\), every sample receives positive weight in the parameter update, whereas the hard truncation used in the classic CEM discards all samples with \(\tilde f(x)>\tau\).

\subsection{Choice of Discrepancies and Convexity Condition}
The second dimension of PDL design is the choice of discrepancy. 
We illustrate with the reverse $\chi^2$ divergence, then derive a general convexity condition for the lifted objective function~\eqref{eq:pdl}.

\subsubsection[Reverse chi-squared divergence]{Reverse $\chi^2$ divergence}
We consider the following PDL design under the reverse $\chi^2$ divergence
\begin{itemize}
  \item $\pi_\theta$: product-simplex parameterization~\eqref{eq:prodsimp};
  \item $D(\pi_\theta, \mathbb{P}) := \chi^2(\mathbb{P} \| \pi_\theta)$ (reverse $\chi^2$ divergence);
  \item $\mathbb{P}_\tau$: the reference induced by a weight function $w_\tau$ for some $\tau > 0$.
\end{itemize}
A direct computation shows
\[
\chi^2(\mathbb P_\tau \| \pi_\theta) = \mathbb E_{\pi_\theta}
\left[
\left(
\frac{\mathbb P_\tau}{\pi_\theta}-1
\right)^2
\right]
=
\sum_{x\in\mathcal X}
\frac{\mathbb P_\tau(x)^2}{\pi_\theta(x)}
-1
=
\mathbb E_{\mathbb P_\tau}
\left[
\frac{\mathbb P_\tau}{\pi_\theta}
\right]-1.
\]
Hence, the associated lifted problem is equivalent to
\begin{equation}
  \label{eq:chicem}
  \min_{\theta \in \Theta}   \mathbb E_{\mathbb P_\tau}\left[\frac{\mathbb P_\tau}{\pi_\theta}\right] = \min_{\theta \in \Theta}\sum_{x \in \mathcal X} \mathbb P_\tau(x)^2 \prod_{i \in [n]} \theta^{-1}_{i, x_i} = \min_{\theta \in \Theta}\sum_{x \in \mathcal X} \mathbb P_\tau(x)^2 \exp\left(-\sum_{i \in [n]} \log \theta_{i, x_i}\right),
\end{equation}
which is a convex optimization problem over the product simplex \(\Theta=\prod_{i\in[n]}\Delta_i\). The following proposition characterizes its KKT conditions through a fixed-point equation.

\begin{restatable}{proposition}{reversechifixedpoint}
\label{prop:chicem}
Suppose \(\mathbb P_\tau\) has full support.
A solution
\(\theta\in\Theta\) is optimal for~\eqref{eq:chicem} if and only if
\[
\theta_{ij}
=
\frac{\sqrt{C_{ij}}}
{\sum_{j'\in[l_i,u_i]}\sqrt{C_{ij'}}},
\text{ where }
C_{ij}
:=
\sum_{x:x_i=j}
\frac{\mathbb P_\tau(x)^2}
{\prod_{k\neq i}\theta_{k,x_k}}
=
\theta_{ij}
\mathbb E_{\pi_\theta}
\left[
\left(
\frac{\mathbb P_\tau(x)}{\pi_\theta(x)}
\right)^2
\mathbb I(x_i=j)
\right].
\]
\end{restatable}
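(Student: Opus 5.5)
The plan is to write down the KKT conditions of \eqref{eq:chicem} over the product of simplices and solve them coordinate block by coordinate block. Define $F(\theta):=\sum_{x}\mathbb P_\tau(x)^2\prod_{i}\theta_{i,x_i}^{-1}$. It is a positive combination of functions $\exp(-\sum_i\log\theta_{i,x_i})$. Each of these is a convex nondecreasing function composed with a convex function, so $F$ is convex on the interior of $\Theta$ and equals $+\infty$ whenever some $\theta_{ij}=0$. Since $\mathbb P_\tau$ has full support, every $x$ carries positive weight. Hence every pair $(i,j)$ appears in some term with positive coefficient, and $F(\theta)<\infty$ forces $\theta_{ij}>0$ for all $i,j$. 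Any optimal $\theta$ therefore lies in the relative interior of $\Theta$. At such points the nonnegativity constraints are inactive, so optimality is characterized by stationarity with respect to the equality constraints alone. Convexity makes this stationarity both necessary and sufficient.

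The next step is the derivative. Differentiating in $\theta_{ij}$ gives
\[
\partial_{\theta_{ij}}F=-\sum_{x:x_i=j}\frac{\mathbb P_\tau(x)^2}{\theta_{ij}^2\prod_{k\neq i}\theta_{k,x_k}}=-\frac{C_{ij}}{\theta_{ij}^2}.
\]
The Lagrange conditions with multipliers $\lambda_i$ for $\sum_j\theta_{ij}=1$ read $C_{ij}/\theta_{ij}^2=\lambda_i$ for all $j\in[l_i,u_i]$. Because $C_{ij}>0$, this gives $\lambda_i>0$ and $\theta_{ij}=\sqrt{C_{ij}/\lambda_i}$. Summing over $j$ and using the simplex constraint yields $\sqrt{\lambda_i}=\sum_{j'}\sqrt{C_{ij'}}$, which is exactly the stated fixed point.

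For the converse, suppose $\theta$ satisfies the fixed-point equation. Then every $\theta_{ij}>0$, because $C_{ij}>0$. Setting $\lambda_i:=(\sum_{j'}\sqrt{C_{ij'}})^2$ gives $C_{ij}/\theta_{ij}^2=\lambda_i$, so stationarity holds at an interior point. By convexity of $F$ on the relatively open convex set where it is finite, $\theta$ is a global minimizer.

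The last step is the identity for $C_{ij}$, which I verify by direct rewriting. We have $\pi_\theta(x)=\theta_{ij}\prod_{k\neq i}\theta_{k,x_k}$ on $\{x_i=j\}$. Therefore
\[
\theta_{ij}\,\mathbb E_{\pi_\theta}\!\left[\left(\frac{\mathbb P_\tau(x)}{\pi_\theta(x)}\right)^2\mathbb I(x_i=j)\right]=\theta_{ij}\sum_{x:x_i=j}\frac{\mathbb P_\tau(x)^2}{\pi_\theta(x)}=\sum_{x:x_i=j}\frac{\mathbb P_\tau(x)^2}{\prod_{k\neq i}\theta_{k,x_k}}.
\]

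The main point needing care is the boundary. I must justify that KKT stationarity on the interior fully characterizes optimality, even though $F$ is extended-valued at the boundary. The full-support argument above handles this: no boundary point has finite value, so the minimizer, which exists by lower semicontinuity on the compact set $\Theta$, is interior.
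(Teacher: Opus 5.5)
Your proof is correct and follows the same route as the paper. You use full support to force every optimal $\theta_{ij}>0$, derive the per-block stationarity condition $C_{ij}/\theta_{ij}^2=\lambda_i$ (the paper gets it from the block rewriting $\sum_j C_{ij}/\theta_{ij}$, you from the partial derivative, which is equivalent), and solve for $\lambda_i$ through the simplex constraint. Beyond this, you spell out the converse direction via convexity, the existence of a minimizer, and the expectation form of $C_{ij}$, all of which the paper leaves implicit.
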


This fixed-point characterization provides an iterative algorithm for solving~\eqref{eq:chicem}: at each iteration, the quantities $C_{ij}$ are estimated via importance sampling under the current distribution $\pi_\theta$, and $\theta$ is updated according to the fixed-point equation. The following corollary gives the sampling guarantee for this method, which follows directly from Proposition~\ref{prop:tvdominance} and Proposition~\ref{prop:weibull}.

\begin{corollary}
  \label{coro:chicem}
  If $\theta$ is a solution of~\eqref{eq:chicem} with objective value
  $z_\tau$, then
  \[
    \pi_\theta(\mathcal{X}_\delta) \geq
    \mathbb{E}_{\pi_{\theta'}}\!\left[
      \frac{w_\tau(\tilde{f}(x))}{Z_\tau \cdot \pi_{\theta'}(x)}
      \,\mathbb{I}(x \in \mathcal{X}_\delta)
    \right]
    - \frac{\sqrt{z_\tau - 1}}{2},
  \]
  for any fully supported $\pi_{\theta'}$.
\end{corollary}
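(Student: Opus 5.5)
Corollary~\ref{coro:chicem} is Theorem~\ref{thm:sampling} applied with the reverse $\chi^2$ discrepancy, followed by rewriting the two resulting terms. I would first identify the discrepancy value at $\theta$. By the direct computation preceding~\eqref{eq:chicem},
\[
\chi^2(\mathbb P_\tau\|\pi_\theta)=\mathbb E_{\mathbb P_\tau}\!\left[\frac{\mathbb P_\tau}{\pi_\theta}\right]-1 .
\]
The objective of~\eqref{eq:chicem} is exactly $\mathbb E_{\mathbb P_\tau}[\mathbb P_\tau/\pi_\theta]$, so if $\theta$ attains value $z_\tau$ then $D(\pi_\theta,\mathbb P_\tau)=z_\tau-1=:\varepsilon$. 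This quantity is nonnegative, so the square root is well defined.

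Next I would invoke TV-dominance. Proposition~\ref{prop:tvdominance}(ii) gives $h(\varepsilon)=\sqrt{\varepsilon}/2$ for the forward $\chi^2$ divergence. The remark after it notes that TV is symmetric, so the same $h$ serves for the reverse divergence $\chi^2(\mathbb P_\tau\|\pi_\theta)$. Concretely, $\mathrm{TV}(\pi_\theta,\mathbb P_\tau)=\mathrm{TV}(\mathbb P_\tau,\pi_\theta)\le \sqrt{\chi^2(\mathbb P_\tau\|\pi_\theta)}/2$. Theorem~\ref{thm:sampling} with $\mathcal X'=\mathcal X_\delta$ then yields
\[
\pi_\theta(\mathcal X_\delta)\ \ge\ \mathbb P_\tau(\mathcal X_\delta)-\tfrac{\sqrt{z_\tau-1}}{2}.
\]

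Finally I would rewrite the concentration term as an importance-sampling expectation. The reference is $\mathbb P_\tau(x)=w_\tau(\tilde f(x))/Z_\tau$, and $\pi_{\theta'}$ is any fully supported distribution. Since $\pi_{\theta'}(x)>0$ for every $x$, the change of measure is exact:
\[
\mathbb P_\tau(\mathcal X_\delta)=\sum_{x\in\mathcal X_\delta}\pi_{\theta'}(x)\,\frac{w_\tau(\tilde f(x))}{Z_\tau\,\pi_{\theta'}(x)}=\mathbb E_{\pi_{\theta'}}\!\left[\frac{w_\tau(\tilde f(x))}{Z_\tau\,\pi_{\theta'}(x)}\,\mathbb I(x\in\mathcal X_\delta)\right].
\]
This is the same identity as in Proposition~\ref{prop:weibull}, but stated for a general weight $w_\tau$ and a general proposal $\pi_{\theta'}$. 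Substituting it into the previous display gives the claim.

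None of the steps is a real obstacle. The one point needing care is justifying the use of the forward-$\chi^2$ function $h$ for the reverse divergence. I would make this explicit through the symmetry of TV, as sketched above, rather than appeal only to the remark.
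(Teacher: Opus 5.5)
Your proposal is correct and follows the paper's route. The paper says the corollary follows directly from Theorem~\ref{thm:sampling}, the $\chi^2$ TV-dominance bound of Proposition~\ref{prop:tvdominance} (transferred to the reverse divergence by symmetry of TV), and the importance-sampling identity of Proposition~\ref{prop:weibull}, with $D(\pi_\theta,\mathbb P_\tau)=z_\tau-1$. You also make explicit two points the paper leaves implicit: the Cauchy--Schwarz bound applies verbatim to $\chi^2(\mathbb P_\tau\|\pi_\theta)$, and the change-of-measure identity holds for any weight $w_\tau$ and any fully supported $\pi_{\theta'}$.
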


Compared with the reverse KL divergence used in classical CEM, the reverse \(\chi^2\) divergence penalizes under-coverage of \(\mathbb P_\tau\) more strongly and admits a tighter TV-dominance bound, at the cost of losing the closed-form global update while preserving convexity.

\subsubsection{Convexity Condition of the Lifted Problem}
Both the CEM objective~\eqref{eq:cem_kl} and the reverse \(\chi^2\) divergence objective~\eqref{eq:chicem} are convex, providing a favorable optimization structure. This motivates identifying discrepancy and parameterization choices that preserve convexity. 
The following theorem provides a general sufficient condition for convexity based on the concept of \(\phi\)-divergence~\citep{renyi1961measures,ben2013robust}, defined by
\(
D_{\phi}(\mathbb P\|\mathbb Q)
:=
\mathbb E_{\mathbb Q}
\left[
\phi\left(\mathbb P/\mathbb Q\right)
\right],
\)
where \(\phi:\mathbb R_{+}\to\mathbb R\) is convex with \(\phi(1)=0\).

\begin{restatable}{theorem}{convexitytheorem}
\label{thm:conv}
Let 
  \(
    D(\pi_\theta,\mathbb P_\tau)
    := D_{\phi}(\mathbb P_\tau \| \pi_\theta),
  \)
where \(\phi:\mathbb R_{++}\to\mathbb R\) is twice continuously differentiable and convex.
Suppose \(\mathbb P_\tau\) and \(\pi_\theta\) are fully supported,
\(\pi_\theta(x)\) is log-concave in \(\theta\) for every \(x\in\mathcal X\),
and
\begin{equation}
\label{eq:conv}
\phi''(r)
\geq
\frac{1}{r^2}
\int_0^r t\phi''(t)\,\mathrm{d}t,
\qquad
\forall r>0.
\end{equation}
Then \(D(\pi_\theta,\mathbb P_\tau)\) is convex in \(\theta\).
In particular, \eqref{eq:conv} holds whenever
\(r\phi''(r)\) is non-decreasing in $r$.
\end{restatable}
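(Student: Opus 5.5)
The plan is to reduce convexity of the lifted objective to a one-dimensional statement about each summand. Expanding the $\phi$-divergence gives
\[
D_\phi(\mathbb P_\tau\|\pi_\theta)=\sum_{x\in\mathcal X}\pi_\theta(x)\,\phi\!\left(\frac{\mathbb P_\tau(x)}{\pi_\theta(x)}\right).
\]
Since $\pi_\theta(x)$ is positive and log-concave in $\theta$, the function $u_x(\theta):=-\log\pi_\theta(x)$ is convex. Fixing $x$ and writing $p:=\mathbb P_\tau(x)>0$, the summand equals $g_p(u_x(\theta))$, where
\[
g_p(u):=e^{-u}\phi(pe^{u}).
\]
By the standard composition rule, a convex non-decreasing function of a convex function is convex. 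So it suffices to show that each $g_p$ is convex and non-decreasing on $\mathbb R$, possibly after a harmless modification of $\phi$.

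\textbf{Normalizing $\phi$.} For any constant $a$, replacing $\phi$ by $\psi(r):=\phi(r)+a(r-1)$ leaves $D$ unchanged, because
\[
\sum_x\pi_\theta(x)\,a\!\left(\frac{\mathbb P_\tau(x)}{\pi_\theta(x)}-1\right)=a(1-1)=0.
\]
It also leaves $\psi''=\phi''$ unchanged, so condition~\eqref{eq:conv} is unaffected. Set $k(r):=r\phi'(r)-\phi(r)$. Then $k'(r)=r\phi''(r)\ge0$, so $k$ is non-decreasing and $k(r)=c+\int_0^r t\phi''(t)\,\mathrm dt$ with $c:=\lim_{r\downarrow0}k(r)$.

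Condition~\eqref{eq:conv} forces $\int_0^r t\phi''(t)\,\mathrm dt\le r^2\phi''(r)<\infty$, so $c$ is finite. For example, $\phi=-\log$ violates~\eqref{eq:conv} precisely because this integral diverges. Choosing $a=-c$ gives, for $\psi$,
\[
r\psi'(r)-\psi(r)=\int_0^r t\phi''(t)\,\mathrm dt\ \ge 0 .
\]
Establishing this finiteness and the shift is the step I expect to need the most care.

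\textbf{Derivatives of $g_p$.} Work with $\psi$ in place of $\phi$, and write $r=pe^{u}$, so that $\mathrm dr/\mathrm du=r$. A direct computation gives
\[
g_p'(u)=-e^{-u}\psi(r)+e^{-u}\psi'(r)\,r=e^{-u}\bigl(r\psi'(r)-\psi(r)\bigr)\ge0,
\]
so $g_p$ is non-decreasing. Differentiating once more,
\[
g_p''(u)=-e^{-u}\bigl(r\psi'-\psi\bigr)+e^{-u}\,r\cdot r\psi''(r)=e^{-u}\Bigl(r^2\phi''(r)-\int_0^r t\phi''(t)\,\mathrm dt\Bigr)\ge0,
\]
where the inequality is exactly~\eqref{eq:conv}. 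Hence each summand is convex in $\theta$, and so is their finite sum $D$.

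\textbf{The sufficient condition.} If $r\phi''(r)$ is non-decreasing, then for $t\le r$ we have $t\phi''(t)\le r\phi''(r)$. Therefore
\[
\int_0^r t\phi''(t)\,\mathrm dt\le r\cdot r\phi''(r)=r^2\phi''(r),
\]
which is~\eqref{eq:conv}.
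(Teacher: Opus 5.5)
Your proposal is correct and is essentially the paper's proof: the paper also uses \eqref{eq:conv} at $r=1$ to show that $B_0:=\lim_{r\downarrow 0}\bigl(\phi(r)-r\phi'(r)\bigr)$ (your $-c$) is finite, then writes $D_\phi(\Prob_\tau\|\pi_\theta)=\sum_x \Prob_\tau(x)\,G\bigl(\log\Prob_\tau(x)-\log\pi_\theta(x)\bigr)+B_0$ with $G(s)=e^{-s}[\phi(e^s)-B_0]$. It then checks $G'\ge 0$ and $G''\ge 0$ by the same computation and concludes by the composition rule. Your $g_p$ equals $\Prob_\tau(x)\,G$ up to a shift of variable and an additive constant, so your $a(r-1)$ normalization and the paper's subtraction of $B_0$ are two bookkeeping choices for the same step.
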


This theorem provides separate design principles for the discrepancy $D$ and the parameterization $\pi_\theta$ that ensure convexity of the lifted problem. For example, the product-simplex parameterization is log-concave in $\theta$ for every $x\in\mathcal X$ and therefore satisfies the parameterization condition. For the reverse KL and reverse $\chi^2$ divergences, the corresponding $\phi$ functions are
\[
  \phi_{\mathrm{KL}}(r)=r\log r-r+1,
  \qquad
  \phi_{\chi^2}(r)=(r-1)^2.
\]
A direct calculation gives
$r\phi_{\mathrm{KL}}''(r)=1$ and $r\phi_{\chi^2}''(r)=2r$,
both of which are non-decreasing on $\mathbb R_{++}$. Hence, both discrepancies satisfy \eqref{eq:conv}. 
Finally, we note that the reverse discrepancy used in Theorem \ref{thm:conv} is not restrictive, since any forward discrepancy
\(
  D_{\phi}(\pi_\theta\|\mathbb P_\tau)
\)
can be expressed as the reverse discrepancy
\(
  D_{\phi^\ast}(\mathbb P_\tau\|\pi_\theta)
\)
through the transformation defined as 
\(
  \phi^\ast(r):=r\phi(1/r).
\)

\subsection{Choice of Parameterizations}
\label{sec:param}
The third design dimension is the parameterization $\pi_\theta: \Theta \rightarrow \mathcal P(\mathcal X)$.
This subsection presents several alternatives to the product-simplex
representation, highlighting their trade-offs among expressiveness,
parameter dimension, and log-concavity.

\subsubsection{Discrete Gaussian and Gaussian Mixtures}
Given $\theta := (\mu, \lambda) \in \Theta := \mathbb{R}^n \times \mathbb{R}^n_{++}$, the discrete Gaussian parameterization is defined as
\begin{equation}
  \label{eq:normparam}
  \pi_\theta(x) = \prod_{i \in [n]}
  \frac{e^{-\frac{\lambda_i}{2}(x_i - \mu_i)^2}}
  {\sum_{k \in [l_i, u_i]} e^{-\frac{\lambda_i}{2}(k - \mu_i)^2}} \propto \exp\left(-\frac{1}{2}(x-\mu)^\top \Lambda (x - \mu)\right),
\end{equation}
where $\Lambda:=\mathrm{diag}(\lambda)$ is the precision matrix.
Compared with the product-simplex parameterization, the optimization domain $\Theta = \mathbb{R}^n \times \mathbb{R}_{++}^n$ imposes only positivity constraints on $\lambda$ and has $2n$ parameters versus $\sum_{i \in [n]}(u_i - l_i + 1)$ in the product-simplex case, so $\{\pi_\theta \mid \theta \in \Theta\}$ is a lower-dimensional parametric family within the product-simplex family, restricted to Gaussian marginals. The tradeoff is expressiveness versus parsimony: the product-simplex family can represent any factored distribution over $\mathcal{X}$, while the discrete Gaussian family cannot, but the latter requires estimating far fewer parameters from samples and provides a built-in concentration mechanism through $\lambda$, where increasing $\lambda_i$ sharpens the $i$-th marginal. 

This parameterization extends naturally to a discrete Gaussian mixture by allowing multiple components per coordinate:
\begin{equation}
  \label{eq:gmmparam}
  \pi_\theta(x) = \prod_{i \in [n]} \sum_{j \in [m]}
  \gamma_{ij}\frac{\exp\left(-\frac{\lambda_{ij}}{2}(x_i - \mu_{ij})^2\right)}
  {\sum_{k \in [l_i, u_i]} \exp\left(-\frac{\lambda_{ij}}{2}(k - \mu_{ij})^2\right)},
\end{equation}
where $\theta = (\mu, \lambda, \gamma)$ with $\mu, \lambda \in \mathbb{R}^{n \times m}$, $\lambda > 0$ entrywise, and $\gamma \in \mathbb{R}^{n \times m}_+$ with $\sum_{j \in [m]} \gamma_{ij} = 1$ for every $i \in [n]$. 
The mixture relaxes the unimodality restriction of the single-component case, enabling each marginal to capture multimodal structure in the objective landscape, while its $3mn$ parameters can remain substantially fewer than the $\sum_{i\in[n]}(u_i-l_i+1)$ parameters of the product-simplex family when $m$ is moderate relative to the marginal support sizes.

\subsubsection{Product and Feature-Based Logits}

Another parameterization is given by the following \emph{product-logit} parameterization:
\begin{equation}
  \label{eq:logit}
  \pi_\theta(x)
  =
  \prod_{i\in[n]}
  \frac{\exp(\theta_{i x_i})}
  {\sum_{k\in[l_i,u_i]}\exp(\theta_{ik})}.
\end{equation}
This parameterization assigns an independent categorical distribution to each coordinate $x_i$, allowing each marginal to take an arbitrary shape. Over the set of fully supported factored distributions on $\mathcal X$, it has the same expressive power as the product-simplex parameterization. It is also pointwise log-concave in $\theta$ and therefore satisfies the parameterization condition in Theorem~\ref{thm:conv}. Furthermore, the logits can be expressed through a lower-dimensional collection of features to reduce the number of parameters.

As an example, although the standard discrete Gaussian parameterization in~\eqref{eq:normparam} is generally not log-concave in its mean and precision parameters, the same distribution family admits an equivalent feature-based logit parameterization, termed the \emph{quadratic-logit} parameterization. Since the constant terms in its quadratic exponent cancel through normalization, it can be written as
\begin{equation}
  \label{eq:gaussianlogit}
  \pi_\theta(x)
  =
  \prod_{i\in[n]}
  \frac{
    \exp\!\left(\eta_i x_i-\lambda_i x_i^2\right)
  }{
    \sum_{k\in[l_i,u_i]}
    \exp\!\left(\eta_i k-\lambda_i k^2\right)
  },
  \qquad
  \theta=(\eta,\lambda)\in\mathbb R^n\times\mathbb R_{++}^n.
\end{equation}
For every fixed $x\in\mathcal X$, a direct computation shows that $\log \pi_\theta(x)$ is concave in $\theta$. Therefore, by Theorem~\ref{thm:conv}, the lifted problem is convex for suitable discrepancies, including the reverse KL and reverse $\chi^2$ divergences.

\subsubsection{Neural-Logit Parameterization}
Without loss of generality, suppose all entries of \(x\) share common bounds \(l\) and \(u\), and let \(z_\theta\) be a shared neural network that maps the feature vector \(\phi(i)\) associated with each coordinate \(i\in[n]\) to a score vector over \([l,u]\). The resulting neural-logit parameterization is
\begin{equation}
  \label{eq:neural}
  \pi_\theta(x)
  =
  \prod_{i\in[n]}
  \frac{\exp\!\left([z_\theta(\phi(i))]_{x_i}\right)}
  {\sum_{k\in[l,u]}\exp\!\left([z_\theta(\phi(i))]_k\right)}.
\end{equation}
Unlike the parameterizations described above, the feature vector $\phi(i)$ enables the model to incorporate problem-specific contextual information and exploit latent structure shared across coordinates.


%% file: figures/stretchexp.tex
\begin{tikzpicture}
\begin{axis}[
  width=10cm,
  height=5.5cm,
  xlabel={$\alpha/\tau$},
  ylabel={$w_\tau(\alpha)$},
  xmin=0, xmax=3,
  ymin=-0.06, ymax=1.18,
  domain=0:3,
  samples=400,
  axis lines=left,
  every axis x label/.style={at={(ticklabel* cs:1.0)}, anchor=north west},
  every axis y label/.style={at={(ticklabel* cs:1.0)}, anchor=south east},
  tick label style={font=\small},
  ytick={0, 0.5, 1},
  xtick={0, 1, 2, 3},
  clip=false,
]

\addplot[line width=1.1pt, loosely dashed, gray!55!black, forget plot]
  coordinates {(0,1) (1,1)};
\addplot[line width=1.1pt, loosely dashed, gray!55!black, forget plot]
  coordinates {(1,1) (1,0)};
\addplot[line width=1.1pt, loosely dashed, gray!55!black, forget plot]
  coordinates {(1,0) (3,0)};

\addplot[line width=1.1pt, solid, black, forget plot] {exp(-x)};

\addplot[line width=1.1pt, densely dashed, black, forget plot] {exp(-x^2)};

\addplot[line width=1.1pt, densely dotted, black, forget plot] {exp(-x^4)};

\addplot[line width=1.1pt, dashdotted, black, forget plot] {exp(-x^8)};

\node[font=\footnotesize, anchor=south west, black] at (axis cs:2.55,0.06) {$p=1$ (Boltzmann)};
\node[font=\footnotesize, anchor=south west, black] at (axis cs:1.05,1.02)
  {$p \to \infty$ (tail indicator)};


\end{axis}
\end{tikzpicture}

%% file: sections/5_algo.tex
\section{The PDL Algorithm}
\label{sec:algo}

Existing methods such as CEM and MFV in Section~\ref{sec:instances} rely on closed-form solutions or fixed-point characterizations tailored to their specific PDL schemes. 
General PDL schemes, such as those using neural-network parameterizations, often lack such structure.
This section introduces a unified solution approach by viewing the lifted problem as a decision-dependent stochastic optimization problem over continuous variables, with a first-order method designed for high-throughput architectures such as GPUs. This perspective also accommodates more general gradient updates, such as the Riemannian gradient approach illustrated in Appendix~\ref{sec:frupdate}. Throughout this section, we assume the parameter space $\Theta$ is nonempty, closed, and convex.

\subsection{Main Algorithm: Decision-Dependent Projected Stochastic Gradient}
Our main algorithm is based on the following gradient representation theorem for the lifted problem~\eqref{eq:pdl}.

\begin{restatable}{theorem}{pdlgradienttheorem}
\label{thm:pdl-gradient}
Consider a PDL scheme \((\pi_\theta,D,\Prob_\tau)\). For every fully
supported \(\pi_\theta\in\mathcal P(\mathcal X)\),
\begin{equation}
\label{eq:pdlgrad}
\nabla_\theta D(\pi_\theta,\Prob_\tau)
=
\E_{x\sim\pi_\theta}
\left[
\partial_\pi D(\pi_\theta,\Prob_\tau)(x)
\nabla_\theta\log\pi_\theta(x)
\right],
\end{equation}
where \(\partial_\pi D(\pi_\theta,\Prob_\tau)(x)\) denotes the
derivative of \(D\) with respect to its first distributional argument,
evaluated at \(\pi_\theta\) and coordinate \(x\in\mathcal X\).
\end{restatable}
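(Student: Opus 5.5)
The plan is a direct chain-rule argument, treating \(D(\cdot,\Prob_\tau)\) as a function on the finite-dimensional simplex \(\mathcal P(\mathcal X)\subset\mathbb R^{\mathcal X}\) and composing it with the map \(\theta\mapsto\pi_\theta\). Since \(\mathcal X\) is finite, \(\pi_\theta\) is simply the vector \((\pi_\theta(x))_{x\in\mathcal X}\). I interpret \(\partial_\pi D(\pi_\theta,\Prob_\tau)(x)\) as the partial derivative of \(D\) with respect to the coordinate \(\pi(x)\), evaluated at \(\pi_\theta\). Differentiability of \(D\) in its first argument is part of the definition of a discrepancy. I assume, as is implicit in the statement, that \(\theta\mapsto\pi_\theta(x)\) is differentiable for each \(x\).

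\textbf{Step 1: chain rule.} The multivariate chain rule gives
\[
\nabla_\theta D(\pi_\theta,\Prob_\tau)=\sum_{x\in\mathcal X}\partial_\pi D(\pi_\theta,\Prob_\tau)(x)\,\nabla_\theta\pi_\theta(x).
\]

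\textbf{Step 2: log-derivative trick.} Full support means \(\pi_\theta(x)>0\) for every \(x\). Hence \(\log\pi_\theta(x)\) is differentiable, and
\[
\nabla_\theta\pi_\theta(x)=\pi_\theta(x)\nabla_\theta\log\pi_\theta(x).
\]
Substituting this into Step 1 turns the sum into \(\sum_x\pi_\theta(x)\,[\cdots]\), which is exactly the expectation in \eqref{eq:pdlgrad}. This is the only place full support is used.

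\textbf{Main obstacle: the domain of \(D\).} The domain of \(D\) is the simplex, so the partial derivative \(\partial_\pi D\) is only defined up to an additive constant \(c\). For example, one could extend \(D\) off the simplex in different ways, or use a directional derivative along the tangent space \(\{v:\sum_x v(x)=0\}\).

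I would show this ambiguity does not affect the result. Because \(\sum_x\pi_\theta(x)=1\) for all \(\theta\), we have
\[
\E_{\pi_\theta}[\nabla_\theta\log\pi_\theta(x)]=\nabla_\theta\sum_x\pi_\theta(x)=0.
\]
So adding any constant \(c\) to \(\partial_\pi D\) leaves the right-hand side of \eqref{eq:pdlgrad} unchanged, and the formula is well defined for any choice of representative.

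Equivalently, \(\nabla_\theta\pi_\theta\) lies in the tangent space of the simplex, so the chain rule only needs the derivative of \(D\) along tangent directions. I would state this as a short remark after the main computation.
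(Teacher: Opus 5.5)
Your proposal is correct and matches the paper's proof: apply the chain rule over the finitely many coordinates \(\pi_\theta(x)\), then use full support to write \(\nabla_\theta\pi_\theta(x)=\pi_\theta(x)\nabla_\theta\log\pi_\theta(x)\). Your extra remark goes beyond the paper. It notes that \(\partial_\pi D\) is determined only up to an additive constant on the simplex, and that this is harmless because the score \(\nabla_\theta\log\pi_\theta\) has zero mean under \(\pi_\theta\); this refinement is correct.
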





\begin{algorithm}[!t]
\caption{Decision-Dependent Projected Stochastic Gradient (DD-PSG)}
\label{algo:pdl}
\begin{algorithmic}[1]
\Require PDL scheme \((\pi_\theta,D,\mathbb P_\tau)\);
initial \(\theta_0\in\Theta\); batch size \(K\);
stepsizes \(\{\eta_t\}_{t\ge0}\); projection \(\Pi_\Theta\)
\State \(z_{\mathrm{best}}\gets\infty\);
\(x_{\mathrm{best}}\gets\varnothing\);
\(\text{halt}\gets\text{false}\);
\(t\gets0\)
\While{not halt}
    \State \(\tau_t\gets\textsc{SetTemperature}(t)\)
    \Comment{temperature scheduling}
    \State \(\{x_k\}_{k=1}^K
    \gets\textsc{Sample}(\pi_{\theta_t})\)
    \Comment{batch sampling}
    \State \((z_{\mathrm{best}},x_{\mathrm{best}})
    \gets
    \textsc{UpdateIncumbent}
    (\{x_k\},z_{\mathrm{best}},x_{\mathrm{best}})\)
    \State \(\omega_k\gets
    \partial_\pi D(\pi_{\theta_t},\mathbb P_{\tau_t})(x_k)/
    \pi_{\theta_t}(x_k)\)
    for all \(k\in[K]\)
    \State \(\hat g_t\gets
    \frac1K\sum_{k=1}^K
    \omega_k\nabla_\theta\pi_{\theta_t}(x_k)\)
    \Comment{stochastic gradient estimate}
    \State \(\theta_{t+1}\gets
    \Pi_\Theta(\theta_t-\eta_t\hat g_t)\)
    \Comment{projected stochastic gradient step}
    \State \(\text{halt}\gets
    \textsc{CheckHalt}(t,z_{\mathrm{best}},\tau_t)\)
    \State \(t\gets t+1\)
\EndWhile
\State \Return \(x_{\mathrm{best}}\)
\end{algorithmic}
\end{algorithm}

Since \(D\) is assumed differentiable with respect to its first argument, the stochastic gradient admits a sampling-based estimator whenever \(\nabla_\theta \log \pi_\theta(x)\) can be evaluated efficiently. Based on this result, Algorithm~\ref{algo:pdl} summarizes the \emph{decision-dependent projected stochastic gradient} (DD-PSG) framework. At each iteration \(t\), the algorithm sets \(\tau\), draws a batch of \(K\) samples from \(\pi_{\theta_t}\), updates the incumbent solution, and performs the projected stochastic gradient update \(\theta_{t+1}=\Pi_{\Theta}(\theta_t-\eta_t\hat g_t)\), where
\begin{equation}
\label{eq:saagrad}
\hat g_t
=
\frac{1}{K}\sum_{k=1}^K
\partial_\pi D(\pi_{\theta_t},\Prob_\tau)(x_k)
\nabla_\theta\log\pi_{\theta_t}(x_k),
\qquad
x_k\sim\pi_{\theta_t}.
\end{equation}
The updates continue until the stopping criterion is satisfied.


Because the stochastic gradient update is the core component of DD-PSG, the remainder of this section focuses on the gradient step and establishes convergence guarantees. Implementation details for the temperature scheduler \textsc{SetTemperature} and the stopping criterion \textsc{CheckHalt} are deferred to Section~\ref{sec:imp}.

\subsection{Convergence Analysis for DD-PSG}
\label{sec:ddsgd-convergence}

Although the PDL problem \eqref{eq:pdl}
is written as a deterministic discrepancy minimization problem, its sampling process is dynamic rather than static, as the sampling distribution $\pi_\theta$ evolves with the decision variable $\theta$.
Indeed, for each of the four discrepancies in~\eqref{eq:common-divergences}, the PDL problem~\eqref{eq:pdl} can be written as a decision-dependent stochastic optimization problem
\[
    \min_{\theta \in \Theta} \E_{x\sim\pi_\theta} [\ell_\tau(\theta,x)]
\]
for some $\ell_\tau(\theta,x)$.
In view of Theorem~\ref{thm:pdl-gradient}, the associated decision-dependent stochastic gradient estimator  
\[
G_\tau(\theta,x) = \partial_\pi
    D(\pi_\theta,\Prob_\tau)(x)
    \nabla_\theta\log\pi_\theta(x) = \nabla_\theta\ell_\tau(\theta,x)
        +
        \ell_\tau(\theta,x)
        \nabla_\theta\log\pi_\theta(x)
\]
is unbiased, that is, $\E_{x\sim\pi_\theta}[G_\tau(\theta,x)] = \nabla_\theta D(\pi_\theta,\Prob_\tau)$.

Let $d_{\mathcal X}$ be a metric on $\mathcal X$, and let $W_1$
denote the associated $1$-Wasserstein distance
\[
    W_1(\mu,\nu) = \inf_{\Gamma\in\Pi(\mu,\nu)} \E_{(x,x')\sim\Gamma} [d_{\mathcal X}(x,x')],
\]
where $\Pi(\mu,\nu)$ denotes the set of couplings of $\mu$ and $\nu$.
We impose the following regularity conditions. There exist constants
$\gamma,\beta,L_G > 0$ such that
\begin{subequations}
\begin{align}
    W_1(\pi_\theta,\pi_{\theta'}) &\le \gamma\|\theta-\theta'\|, \label{eq:distribution-wasserstein-lip} \\
    \|G_\tau(\theta,x) - G_\tau(\theta,x') \| & \le \beta d_{\mathcal X}(x,x'), \label{eq:oracle-data-lip} \\
    \|G_\tau(\theta,x) - G_\tau(\theta',x)\| & \le L_G\|\theta-\theta'\|. \label{eq:oracle-theta-lip}
\end{align}
\end{subequations}

\begin{remark}
\label{rem:finite-space-dd}
The regularity conditions above admit simple sufficient conditions when
$\mathcal X$ is finite. Consider the discrete metric $d_{\mathcal X}(x,x')= \mathbb I(x\neq x')$, which equals $0$ if $x = x'$ and $1$ otherwise.
Then the associated $1$-Wasserstein distance coincides with the total
variation distance
\[
    W_1(\mu,\nu) = \TV(\mu,\nu) = \frac12\|\mu-\nu\|_1.
\]
If the distribution map $\pi_\theta$ is $L_\pi$-Lipschitz continuous, i.e., $\|\pi_\theta-\pi_{\theta'}\|_1 \le L_\pi\|\theta-\theta'\|$,
then \eqref{eq:distribution-wasserstein-lip} holds with $\gamma=L_\pi/2$.
Similarly, if the stochastic gradient $G_\tau(\theta,x)$ is uniformly bounded
\[
    \sup_{\theta\in\Theta,\,x\in\mathcal X}
    \|G_\tau(\theta,x)\|
    \le M_G,
\]
then 
\(
    \|G_\tau(\theta,x)-G_\tau(\theta,x')\|
    \le
    2M_G\,d_{\mathcal X}(x,x')
\) holds under the discrete metric,
so that \eqref{eq:oracle-data-lip} holds with $\beta=2M_G$.
Finally, if $G_\tau(\cdot,x)$ is continuously differentiable for every
$x\in\mathcal X$ and
\[
    \sup_{\theta\in\Theta,\,x\in\mathcal X}
    \|\nabla_\theta G_\tau(\theta,x)\|_{\mathrm{op}}
    \le L_G,
\]
then the mean-value theorem yields \eqref{eq:oracle-theta-lip}.
For discrepancies involving probability ratios, the uniform bounds above
hold on an interior parameter region satisfying
\(
    \inf_{\theta\in\Theta,\,x\in\mathcal X}\pi_\theta(x)>0,
\)
provided the remaining terms and their derivatives are bounded.
\end{remark}

Using an argument similar to \cite[Lemma 2.1]{drusvyatskiy2023stochastic}, we obtain the following result.

\begin{lemma}
\label{lem:pdl-gradient-deviation}
Assume \eqref{eq:oracle-data-lip} holds. Then, for every fixed
$\theta$ and all probability measures $\mu,\nu\in\mathcal P(\mathcal X)$,
\[
    \left\|
        \E_{x\sim\mu}[G_\tau(\theta,x)]
        -
        \E_{x\sim\nu}[G_\tau(\theta,x)]
    \right\|
    \le
    \beta W_1(\mu,\nu).
\]
\end{lemma}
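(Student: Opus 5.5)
The plan is a coupling argument: use any coupling of $\mu$ and $\nu$ to write the difference of expectations as a single expectation, bound it pointwise by \eqref{eq:oracle-data-lip}, and then optimize over couplings.

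Fix $\theta$ and let $\Gamma\in\Pi(\mu,\nu)$ be an arbitrary coupling. Because $\Gamma$ has marginals $\mu$ and $\nu$,
\[
\E_{x\sim\mu}[G_\tau(\theta,x)]-\E_{x'\sim\nu}[G_\tau(\theta,x')]
=\E_{(x,x')\sim\Gamma}\bigl[G_\tau(\theta,x)-G_\tau(\theta,x')\bigr].
\]
Here $\mathcal X$ is countable (a subset of $\mathbb Z^n$), so these are sums over $\mathcal X\times\mathcal X$. The two marginal expectations are finite, being of the form $\E_{\mu}[\|G_\tau(\theta,x)\|]$ and $\E_\nu[\|G_\tau(\theta,x')\|]$. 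In the finite case of Remark~\ref{rem:finite-space-dd} this is automatic. In general, if $W_1(\mu,\nu)=\infty$ the claim is trivial, and I would otherwise assume integrability of $G_\tau(\theta,\cdot)$ under $\mu$ and $\nu$ so that both expectations in the statement are well defined.

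Next, apply Jensen's inequality (the triangle inequality for a vector-valued expectation) to move the norm inside the expectation, and then apply \eqref{eq:oracle-data-lip} pointwise:
\[
\bigl\|\E_\mu G_\tau(\theta,\cdot)-\E_\nu G_\tau(\theta,\cdot)\bigr\|
\le \E_{\Gamma}\bigl\|G_\tau(\theta,x)-G_\tau(\theta,x')\bigr\|
\le \beta\,\E_\Gamma[d_{\mathcal X}(x,x')].
\]
The left-hand side does not depend on $\Gamma$, so taking the infimum over $\Gamma\in\Pi(\mu,\nu)$ on the right gives $\beta W_1(\mu,\nu)$ directly from the definition of $W_1$. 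No attainment of an optimal coupling is needed.

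The only delicate step is justifying the interchange and the integrability on an infinite $\mathcal X$. I would handle it by noting that $\|G_\tau(\theta,x)-G_\tau(\theta,x')\|\le \|G_\tau(\theta,x)\|+\|G_\tau(\theta,x')\|$ is $\Gamma$-integrable whenever the marginal expectations exist, so linearity of the expectation under $\Gamma$ applies. Everything else is routine.
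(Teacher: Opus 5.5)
Your coupling argument is correct, and the integrability bookkeeping you flag is all that is needed (it is vacuous on a finite $\mathcal X$, as in Remark~\ref{rem:finite-space-dd}).

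The paper gives no proof of its own. It points to \cite[Lemma 2.1]{drusvyatskiy2023stochastic}, where the estimate is obtained on the dual side, in four steps. Fix a unit vector $v$. Observe, via \eqref{eq:oracle-data-lip} and Cauchy--Schwarz, that $x\mapsto\langle v,G_\tau(\theta,x)\rangle$ is $\beta$-Lipschitz. Bound $\langle v,\E_{\mu}[G_\tau(\theta,x)]-\E_{\nu}[G_\tau(\theta,x)]\rangle\le\beta W_1(\mu,\nu)$ using the Kantorovich--Rubinstein (test-function) characterization of $W_1$. Finally, take the supremum over $v$.

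This paper defines $W_1$ by couplings. Under that definition, the only ingredient the cited route needs is the easy, weak-duality inequality $\E_\mu h-\E_\nu h\le W_1(\mu,\nu)$ for $1$-Lipschitz scalar $h$. Its proof is exactly your computation in scalar form. So your version is the same idea run directly on the vector-valued map. It is self-contained relative to the paper's definition, and it skips the scalarization step and the supremum over directions. The cited route is a one-liner when $W_1$ is defined through Lipschitz test functions.

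One small sharpening of your integrability remark: you only need integrability under one of the two measures. If $G_\tau(\theta,\cdot)$ is integrable under $\mu$ and some coupling has finite transport cost, then integrability under $\nu$ follows from $\|G_\tau(\theta,x')\|\le\|G_\tau(\theta,x)\|+\beta d_{\mathcal X}(x,x')$.
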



The next result develops the smoothness of the PDL objective.

\begin{restatable}{lemma}{pdlsmooth}
  \label{lem:pdl-effective-smoothness}
Assume \eqref{eq:distribution-wasserstein-lip}-\eqref{eq:oracle-theta-lip} hold. Then, $\nabla_\theta D(\pi_\theta,\Prob_\tau)$ is Lipschitz continuous in $\theta$, i.e., for every $\theta, \theta' \in \Theta$,
\(
    \|
    \nabla_\theta D(\pi_\theta,\Prob_\tau)
    -
    \nabla_\theta D(\pi_{\theta'},\Prob_\tau)
    \|
    \le
    (L_G+\beta\gamma)
    \|\theta-\theta'\|.
\)
\end{restatable}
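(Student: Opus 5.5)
The plan is the standard add-and-subtract argument for decision-dependent objectives, combining Theorem~\ref{thm:pdl-gradient} with Lemma~\ref{lem:pdl-gradient-deviation}. First, I will use the unbiasedness of the oracle noted after Theorem~\ref{thm:pdl-gradient} to write
\[
\nabla_\theta D(\pi_\theta,\Prob_\tau)=\E_{x\sim\pi_\theta}[G_\tau(\theta,x)]
\]
for every $\theta\in\Theta$. Fixing $\theta,\theta'\in\Theta$, I then insert the intermediate quantity $\E_{x\sim\pi_{\theta'}}[G_\tau(\theta,x)]$, which keeps the first argument of the oracle at $\theta$ but draws $x$ from $\pi_{\theta'}$. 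By the triangle inequality,
\begin{align*}
\big\|\nabla_\theta D(\pi_\theta,\Prob_\tau)-\nabla_\theta D(\pi_{\theta'},\Prob_\tau)\big\|
&\le \big\|\E_{\pi_\theta}[G_\tau(\theta,x)]-\E_{\pi_{\theta'}}[G_\tau(\theta,x)]\big\| \\
&\quad+\big\|\E_{\pi_{\theta'}}[G_\tau(\theta,x)-G_\tau(\theta',x)]\big\|.
\end{align*}

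The first term isolates the effect of the moving distribution with the oracle's parameter held fixed. Lemma~\ref{lem:pdl-gradient-deviation}, which relies on \eqref{eq:oracle-data-lip}, applied with $\mu=\pi_\theta$ and $\nu=\pi_{\theta'}$, bounds it by $\beta W_1(\pi_\theta,\pi_{\theta'})$. Assumption \eqref{eq:distribution-wasserstein-lip} then gives $\beta\gamma\|\theta-\theta'\|$.

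The second term isolates the effect of the oracle's parameter under a fixed sampling distribution. By Jensen's inequality (convexity of the norm), it is at most $\E_{\pi_{\theta'}}\|G_\tau(\theta,x)-G_\tau(\theta',x)\|$. Applying \eqref{eq:oracle-theta-lip} pointwise in $x$ bounds this by $L_G\|\theta-\theta'\|$. Adding the two bounds yields the constant $L_G+\beta\gamma$.

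No step is a serious obstacle. The only subtlety is making sure the unbiasedness identity is valid at both $\theta$ and $\theta'$. Theorem~\ref{thm:pdl-gradient} requires $\pi_\theta$ to be fully supported, so I will note that this is implicitly assumed on $\Theta$, as in Remark~\ref{rem:finite-space-dd}. The expectations are finite sums because $\mathcal X$ is finite, so no integrability issues arise.
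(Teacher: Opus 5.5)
Your proposal is correct and matches the paper's proof: both use unbiasedness to write the gradient as $\E_{\pi_\theta}[G_\tau(\theta,x)]$, insert $\E_{\pi_{\theta'}}[G_\tau(\theta,x)]$, and bound the two resulting terms by Lemma~\ref{lem:pdl-gradient-deviation} with \eqref{eq:distribution-wasserstein-lip}, and by \eqref{eq:oracle-theta-lip}, respectively. Your additional remarks about full support and the Jensen step are sound clarifications that the paper leaves implicit.
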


We now study Algorithm \ref{algo:pdl} for solving \eqref{eq:pdl}.
Consider an equivalent formulation of \eqref{eq:pdl},
\begin{equation}\label{eq:composite-pdl}
    \min_{\theta\in\mathbb R^d}
    \left\{\Phi_\tau(\theta) = D(\pi_\theta,\Prob_\tau)+I_\Theta(\theta) \right\},
\end{equation}
where $I_\Theta$ is the indicator function of $\Theta$, i.e., $I_\Theta(\theta)=0$ for $\theta \in \Theta$ and $\infty$ otherwise.
Let $\mathcal F_t$ denote the $\sigma$-algebra generated by the algorithm before iteration $t$. Conditional on $\mathcal F_t$, draw $K$ samples $x_{t,1},\ldots,x_{t,K} \stackrel{\mathrm{i.i.d.}}{\sim} \pi_{\theta_t}$ and define
\[
    \hat g_t = \frac1K \sum_{k=1}^K G_\tau(\theta_t,x_{t,k}).
\]
Given $\theta_t$ and stepsize $\eta_t >0$, the DD-PSG update is
\begin{equation}\label{eq:pdl-prox-sgd}
    \theta_{t+1} = \Pi_\Theta(\theta_t-\eta_t\hat g_t).
\end{equation}
Assume that the unbiased stochastic gradient $G_\tau(\theta,x)$ has uniformly bounded variance
\begin{equation}\label{ineq:variance}
    \E_{x\sim\pi_\theta} \left[ \|G_\tau(\theta,x) - \nabla_\theta D(\pi_\theta,\Prob_\tau)\|^2 \right] \le \sigma^2.
\end{equation}
Hence, the multiple-sample estimator $\hat g_t$ satisfies
\[
    \E_t[\hat g_t] = \nabla_\theta D(\pi_{\theta_t},\Prob_\tau), \quad  \E_t \left[\|\hat g_t - \nabla_\theta D(\pi_{\theta_t},\Prob_\tau)\|^2\right] \le \frac{\sigma^2}{K}.
\]

Now, the decision-dependent stochastic optimization problem
\eqref{eq:composite-pdl} falls within the standard stochastic optimization
framework: its objective has a Lipschitz-continuous gradient
(see Lemma~\ref{lem:pdl-effective-smoothness}), while the stochastic
gradient estimator is unbiased and has uniformly bounded variance.
Therefore, its convergence analysis follows from standard stochastic
optimization arguments; see for example, \cite{nemirovski2009robust}. We therefore omit the standard proof and state the resulting convergence guarantee directly.


\begin{theorem}
\label{thm:pdl-convex-convergence}
Assume $D(\pi_\theta,\Prob_\tau)$ is convex in $\theta$, \eqref{eq:distribution-wasserstein-lip}-\eqref{eq:oracle-theta-lip}, and \eqref{ineq:variance} hold. Let
$\eta_t=\eta\in(0,1/L)$ with $L:=(L_G + \beta\gamma)$ and define
\[
    \bar \theta_T = \frac1T \sum_{t=1}^{T} \theta_t.
\]
Then, applying the projected stochastic gradient method \eqref{eq:pdl-prox-sgd} for solving \eqref{eq:composite-pdl}, we have
\[
    \E \left[\Phi_\tau(\bar\theta_T) - \Phi_\tau(\theta^\star)\right]
    \le \frac{\|\theta_0-\theta^\star\|^2}{2\eta T}
    +
    \frac{\eta\sigma^2}{2K(1-\eta L)},
\]
where $\theta^\star\in\argmin_\theta \Phi_\tau(\theta)$.
Moreover, given tolerance $\varepsilon>0$, choosing
\begin{equation}\label{eq:eta}
    \eta = \min\left\{\frac{1}{2L}, \frac{K \varepsilon}{2 \sigma^2}   \right\},
\end{equation}
the number of iterations required to obtain an $\varepsilon$-solution in expectation, i.e., $\E \left[\Phi_\tau(\bar\theta_T) - \Phi_\tau(\theta^\star)\right] \le \varepsilon$, satisfies
\[
    T = {\cal O} \left(
        \frac{L\|\theta_0-\theta^\star\|^2}{\varepsilon} + \frac{\sigma^2\|\theta_0-\theta^\star\|^2}{K\varepsilon^2}
    \right).
\]
\end{theorem}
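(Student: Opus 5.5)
The plan is to follow the standard projected stochastic gradient analysis for smooth convex objectives. Lemma~\ref{lem:pdl-effective-smoothness} gives that $F(\theta):=D(\pi_\theta,\Prob_\tau)$ has an $L$-Lipschitz gradient with $L=L_G+\beta\gamma$. The estimator $\hat g_t$ is conditionally unbiased with conditional variance at most $\sigma^2/K$. We write $g_t:=\nabla F(\theta_t)$ and $e_t:=\hat g_t-g_t$, so that $\E_t[e_t]=0$ and $\E_t\|e_t\|^2\le\sigma^2/K$. Since $\Theta$ is closed and convex, $\theta^\star\in\Theta$, and $\Phi_\tau=F$ on $\Theta$, it suffices to bound $\E[F(\bar\theta_T)-F(\theta^\star)]$.

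\textbf{One-step inequality.} The projection gives the variational inequality
\[
\langle \theta_t-\eta\hat g_t-\theta_{t+1},\,\theta^\star-\theta_{t+1}\rangle\le 0 .
\]
Combining it with the three-point identity yields
\[
\eta\langle \hat g_t,\theta_{t+1}-\theta^\star\rangle\le \tfrac12\|\theta_t-\theta^\star\|^2-\tfrac12\|\theta_{t+1}-\theta^\star\|^2-\tfrac12\|\theta_{t+1}-\theta_t\|^2 .
\]
Smoothness (the descent lemma) and convexity together give
\[
F(\theta_{t+1})\le F(\theta^\star)+\langle g_t,\theta_{t+1}-\theta^\star\rangle+\tfrac L2\|\theta_{t+1}-\theta_t\|^2 .
\]
We substitute $g_t=\hat g_t-e_t$ and split the noise term as
\[
-\langle e_t,\theta_{t+1}-\theta^\star\rangle=-\langle e_t,\theta_t-\theta^\star\rangle-\langle e_t,\theta_{t+1}-\theta_t\rangle .
\]
The first part has zero conditional expectation because $\theta_t$ is $\mathcal F_t$-measurable. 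For the second part we use Young's inequality:
\[
|\langle e_t,\theta_{t+1}-\theta_t\rangle|\le \frac{\eta}{2(1-\eta L)}\|e_t\|^2+\frac{1-\eta L}{2\eta}\|\theta_{t+1}-\theta_t\|^2 .
\]
This choice makes the $\|\theta_{t+1}-\theta_t\|^2$ terms cancel exactly against $-\tfrac{1}{2\eta}+\tfrac L2$. Taking expectations then gives
\[
\E[F(\theta_{t+1})-F(\theta^\star)]\le \frac{\E\|\theta_t-\theta^\star\|^2-\E\|\theta_{t+1}-\theta^\star\|^2}{2\eta}+\frac{\eta\sigma^2}{2K(1-\eta L)} .
\]

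\textbf{Telescoping and averaging.} We sum the one-step inequality over $t=0,\dots,T-1$, so the left side runs over the iterates $\theta_1,\dots,\theta_T$, which matches the definition of $\bar\theta_T$. Dividing by $T$ and applying Jensen's inequality (convexity of $F$) gives the stated bound. The main point requiring care is the step above: the noise is decision-dependent, and the projected point $\theta_{t+1}$ depends on $e_t$. The fix is to center the noise term at $\theta_t$ rather than $\theta_{t+1}$, so it vanishes in conditional expectation, and to absorb the remaining correlation through the Young's inequality split with constant $(1-\eta L)$.

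\textbf{Complexity.} With $\eta\le 1/(2L)$ we have $1-\eta L\ge 1/2$, so the variance term is at most $\eta\sigma^2/K\le\varepsilon/2$ when $\eta\le K\varepsilon/(2\sigma^2)$. The first term is at most $\varepsilon/2$ once $T\ge\|\theta_0-\theta^\star\|^2/(\eta\varepsilon)$. Substituting $1/\eta=\max\{2L,\,2\sigma^2/(K\varepsilon)\}$ gives
\[
T=\mathcal O\!\left(\frac{L\|\theta_0-\theta^\star\|^2}{\varepsilon}+\frac{\sigma^2\|\theta_0-\theta^\star\|^2}{K\varepsilon^2}\right).
\]
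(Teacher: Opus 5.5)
Your proposal is correct and is the standard smooth-convex projected stochastic gradient argument that the paper invokes but omits, deferring to standard analyses such as \cite{nemirovski2009robust}. Your Young's-inequality split with constant $(1-\eta L)$ reproduces exactly the variance term $\eta\sigma^2/(2K(1-\eta L))$, your telescoping over $\theta_1,\dots,\theta_T$ matches the stated averaging, and your step-size and complexity calculation match the theorem.
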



Combining Theorems~\ref{thm:sampling} and \ref{thm:pdl-convex-convergence}, together with a concave TV-dominance bound, yields the following iteration-complexity guarantee for sampling quality.

\begin{restatable}{corollary}{splcomplexity}
\label{cor:pdl-sampling-complexity}
Suppose the assumptions of
Theorems~\ref{thm:sampling} and
\ref{thm:pdl-convex-convergence} hold, 
and suppose that the TV-dominance function $h$ is concave.
Fix $\tau$ and define
\(
    D_\tau^\star := D(\pi_{\theta^\star},\Prob_\tau),
R_0 := \|\theta_0-\theta^\star\|.
\)
Given $\delta\ge 0$ and $p>0$, choose $s>D_\tau^\star$ such that
\begin{equation} \label{ineq:s-p}
    h(s)\le \Prob_\tau(\mathcal X_\delta)-p,
\end{equation}
where $\mathcal X_\delta = \{x \in \mathcal X \mid \tilde f(x) - \tilde f(x^\ast) \leq \delta\}$.
Then, running DD-PSG with stepsize $\eta$ as in \eqref{eq:eta} with $\varepsilon=s-D_\tau^\star$ for
\[
    T = {\cal O}\left( \frac{LR_0^2}{s-D_\tau^\star} + \frac{\sigma^2R_0^2}{K(s-D_\tau^\star)^2} \right)
\]
iterations suffice to ensure that a sample
$X\sim\pi_{\bar\theta_T}$ satisfies
\[
    \Prob\left(\tilde f(X)-\tilde f(x^\ast)\le\delta\right) \ge p.
\]
\end{restatable}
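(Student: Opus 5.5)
The argument chains Theorem~\ref{thm:pdl-convex-convergence} (expected lifted gap), then Jensen's inequality through the concave $h$, then Theorem~\ref{thm:sampling} at $\bar\theta_T$. Set $\varepsilon := s - D_\tau^\star > 0$ and run DD-PSG with the stepsize \eqref{eq:eta}. Since $\bar\theta_T \in \Theta$ by convexity of $\Theta$, we have $\Phi_\tau(\bar\theta_T) = D(\pi_{\bar\theta_T},\Prob_\tau)$. Theorem~\ref{thm:pdl-convex-convergence} then gives
\[
\E\bigl[D(\pi_{\bar\theta_T},\Prob_\tau)\bigr] \le D_\tau^\star + \varepsilon = s
\]
after $T = {\cal O}\bigl(LR_0^2/\varepsilon + \sigma^2R_0^2/(K\varepsilon^2)\bigr)$ iterations, which is the stated count.

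The delicate point is that $\bar\theta_T$ is random. Only the expectation of the discrepancy is controlled, while Theorem~\ref{thm:sampling} needs a pointwise bound $D \le \varepsilon'$. I would apply Theorem~\ref{thm:sampling} conditionally, for each realization $\bar\theta_T$, with $\varepsilon' := D(\pi_{\bar\theta_T},\Prob_\tau)$ (a valid choice since the theorem only needs $D \le \varepsilon'$). This gives, almost surely,
\[
\pi_{\bar\theta_T}(\mathcal X_\delta) \ge \Prob_\tau(\mathcal X_\delta) - h\bigl(D(\pi_{\bar\theta_T},\Prob_\tau)\bigr).
\]
Because $X \sim \pi_{\bar\theta_T}$, the conditional probability that $X$ is $\delta$-optimal is exactly $\pi_{\bar\theta_T}(\mathcal X_\delta)$. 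Taking expectations over the algorithm's randomness, and applying Jensen's inequality to the concave $h$ together with monotonicity of $h$, yields
\[
\Prob\bigl(\tilde f(X)-\tilde f(x^\ast)\le\delta\bigr) \ge \Prob_\tau(\mathcal X_\delta) - \E\bigl[h(D)\bigr] \ge \Prob_\tau(\mathcal X_\delta) - h(\E[D]) \ge \Prob_\tau(\mathcal X_\delta) - h(s) \ge p,
\]
where the final step is \eqref{ineq:s-p}.

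The main obstacle is justifying this Jensen step. I need $h$ to be defined and concave on $[0,\infty)$, which holds for the examples in Proposition~\ref{prop:tvdominance}. I also need the composition $h\circ D(\pi_{\bar\theta_T},\Prob_\tau)$ to be measurable, which follows from continuity of $\theta\mapsto D(\pi_\theta,\Prob_\tau)$ (Lemma~\ref{lem:pdl-effective-smoothness} gives differentiability). The remaining point is to make explicit that the probability in the conclusion is taken jointly over the algorithm's randomness and the draw $X$.
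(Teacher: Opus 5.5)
Your proposal is correct and follows essentially the same argument as the paper. Theorem~\ref{thm:pdl-convex-convergence} gives $\E[D(\pi_{\bar\theta_T},\Prob_\tau)]\le s$; Theorem~\ref{thm:sampling} is applied pointwise at each realization of $\bar\theta_T$; then you take expectations, use Jensen's inequality and monotonicity of the concave $h$, and finish with \eqref{ineq:s-p}. Your remarks on $\bar\theta_T\in\Theta$ and measurability are minor details that the paper leaves implicit.
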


\begin{remark}
\label{rem:pdl-complexity-interpretation}
Corollary \ref{cor:pdl-sampling-complexity} provides an oracle complexity bound rather than a polynomial-time guarantee for solving \eqref{eq:ip}. The constants $L$ and $\sigma^2$, as well as the cost of sampling and evaluating stochastic gradients, may depend on the problem instance and on the temperature $\tau$. Their finiteness alone does not imply polynomial dependence on the input size. In particular, even when the lifted objective is convex, the variance of $G_\tau(\theta,x)$ may grow exponentially with the problem size. Moreover, convergence of the lifted optimization problem alone does not guarantee high-quality samples: this additionally requires that $\Prob_\tau$ place sufficient probability mass on near-optimal solutions and that the approximation error $D_\tau^\star$ be sufficiently small. Thus, the corollary gives a sampling guarantee under explicit assumptions, without implying that the worst-case computational hardness of the original problem \eqref{eq:ip} has been removed.
\end{remark}

\subsection{Extension to Mixed Integer Programs}
\begin{algorithm}[!tb]
\caption{Block DD-PSG for Mixed-Integer PDL}
\label{algo:pdlmip}
\begin{algorithmic}[1]
\Require Mixed PDL scheme \((\pi_\theta,D,\mathbb{P}_\tau^y)\); initial
  \(\theta_0\in\Theta\); initial \(y_0\in\mathcal Y\); batch size \(K\);
  stepsizes \(\{\eta_t\}_{t\ge0}\);
  projection \(\Pi_\Theta\)
\State \(z_{\text{best}}\gets\infty\);\;
  \(x_{\text{best}}\gets\varnothing\);\;
  \(y_{\text{best}}\gets\varnothing\);\;
  \(\text{halt}\gets\text{false}\);\; \(t\gets0\)
\While{not halt}
  \State \(\tau_t\gets\textsc{SetTemperature}(t)\)
    \Comment{temperature scheduling}
  \State \(\{x_k\}_{k=1}^K\gets\textsc{Sample}(\pi_{\theta_t})\)
    \Comment{batch sampling}
  \State \(y_{t+1}\gets
    \textsc{UpdateContinuous}
    \bigl(y_t,\{x_k\}_{k=1}^K,\mathcal Y\bigr)\)
    \Comment{constrained \(y\)-block update}
  \State \((z_{\text{best}},x_{\text{best}},y_{\text{best}})
    \gets\textsc{UpdateIncumbent}
    (\{x_k\},y_{t+1},z_{\text{best}},x_{\text{best}},y_{\text{best}})\)
  \State \(\hat g_t\gets
    \frac{1}{K}\sum_{k=1}^K
    \partial_\pi D(\pi_{\theta_t},\Prob_{\tau_t}^{y_{t+1}})(x_k)
    \nabla_\theta\log\pi_{\theta_t}(x_k)\)
    \Comment{stochastic gradient estimate}
  \State \(\theta_{t+1}\gets
    \Pi_\Theta\bigl(\theta_t-\eta_t\hat g_t\bigr)\)
    \Comment{projected stochastic gradient step}
  \State \(\text{halt}\gets
    \textsc{CheckHalt}(t,z_{\text{best}},\tau_t)\)
  \State \(t\gets t+1\)
\EndWhile
\State \Return \((x_{\text{best}},y_{\text{best}})\)
\end{algorithmic}
\end{algorithm}
This subsection introduces a heuristic framework that extends DD-PSG to MIPs involving both integer decisions \(x\in\mathcal X\) and continuous variables \(y\in\mathcal Y\), of the general form
\begin{equation}
  \label{eq:surrogate}
  \min_{x\in\mathcal X,\;y\in\mathcal Y} f(x,y)
\end{equation}
For a fixed \(y\in\mathcal Y\), we retain the PDL construction and obtain the lifted problem
\[
  \theta\text{-objective:}
  \qquad
  \min_{\theta\in\Theta}
  D(\pi_\theta,\Prob_\tau^y),
\]
with the interpretation that \(\pi_\theta\) is gradually attracted toward the reference distribution \(\Prob_\tau^y\). In contrast, for a fixed \(\theta\), the optimization over \(y\in\mathcal Y\) is not intended to make \(\Prob_\tau^y\) closer to the current distribution \(\pi_\theta\). Instead, it minimizes the following surrogate of~\eqref{eq:surrogate}:
\[
  y\text{-objective:}
  \qquad
  \min_{y\in\mathcal Y}
  \E_{x\sim\pi_\theta}\left[f(x,y)\right].
\]
Intuitively, as \(\pi_\theta\) becomes increasingly concentrated around promising integer solutions for the current \(y\), this expectation provides an increasingly accurate approximation of~\eqref{eq:surrogate}. 

Since both objectives are continuous, they can be handled with suitable gradient-based methods. Accordingly, Algorithm~\ref{algo:pdlmip} extends Algorithm~\ref{algo:pdl} by adding gradient updates for \(y\) in Lines 5-7. The routine \textsc{UpdateContinuous} may implement any first-order method for optimizing the \(y\)-objective under the current sample batch \(\{x_k\}_{k=1}^K\).



%% file: sections/6_implement.tex
\section{Implementation Details}
\label{sec:imp}
Algorithms~\ref{algo:pdl} and~\ref{algo:pdlmip} provide high-level templates for the proposed methods, while their practical implementation requires several additional design choices to ensure stable and computationally effective updates. This section presents the main techniques used in our implementation.

\subsection{Surrogate Design, Normalization, and Initialization}
\label{sec:energy}
One drawback of the exact surrogate
\(
  \tilde f(x)
  :=
  \max_{u\geq 0}
  \left\{
    f(x)+\iprod{u,g(x)}
  \right\}
\)
is that its effect on the stochastic gradient estimator~\eqref{eq:saagrad} can be either non-informative or numerically unstable when a sampled solution \(x_k\) is infeasible. For instance, under the reverse KL and reverse \(\chi^2\) divergences, \(\tilde f(x_k)=+\infty\) assigns zero nominal probability to \(x_k\), thereby eliminating its gradient contribution entirely. Under the forward KL divergence, it instead assigns an infinite coefficient to the associated gradient contribution. 


In our implementation, we address this issue through the following finite surrogate:
\[
  \tilde f(x)
  :=
  f(x)
  +
  \rho \sum_{j\in[m]} \left(g_j(x)\right)_+,
  \qquad
  \rho
  :=
(\bar f-\underline f)/{\varepsilon_{\min}}.
\]
Here, \(\bar f\) and \(\underline f\) are valid upper and lower bounds on \(f\) over all feasible
solutions, and \(\varepsilon_{\min}\) is a lower bound on
the positive violation
\(
  \sum_{j\in[m]} \left(g_j(x)\right)_+
\)
of any infeasible solution. This construction preserves the objective
values of feasible solutions, while assigning every infeasible solution
a surrogate value at least as large as that of any feasible solution.
Indeed, if \(x\) is infeasible, then
\(
  \tilde f(x)
  \geq
  \underline f+\rho\varepsilon_{\min}
  =
  \bar f,
\)
whereas every feasible solution satisfies
\(\tilde f(x)=f(x)\leq\bar f\).

When $|\mathcal X|$ is finite, the bounds \(\underline f\) and \(\bar f\) can often be determined problem by problem. For instance, when \(f\) is linear and the variables satisfy \(l_i\leq x_i\leq u_i\), valid objective bounds can be obtained by selecting \(l_i\) or \(u_i\) according to the sign of the corresponding objective coefficient. If, in addition, the constraints have integer coefficients, then \(\varepsilon_{\min}=1\) is a valid lower bound on any positive violation.

We also normalize the scale of the surrogate, a standard practice in optimization solvers that helps promote consistent algorithmic performance across problem instances with different numerical magnitudes \citep{gould2004preprocessing,koch2022progress}. Specifically, for each linear objective \(\iprod{c,x}\) and linear constraint \(\iprod{a,x}\leq b\), we normalize the objective coefficients by \(\|c\|_1\) and the constraint coefficients and right-hand side by \(\|a\|_1+|b|\), respectively.
Finally, depending on the problem structure and parameterization, we initialize \(\theta_0\) so that \(\pi_{\theta_0}\) places greater probability on feasible solutions.

\subsection{Self-Normalized Score Function Gradient Update}
Two practical challenges arise when evaluating the sample gradient estimator~\eqref{eq:saagrad}: (i) the normalizing constant \(Z_\tau\) of the reference distribution \(\mathbb P_\tau\) is generally unavailable and may be expensive or noisy to estimate from samples; and (ii) the probability masses \(\mathbb P_\tau(x)\) and \(\pi_\theta(x)\) can be exponentially small, leading to underflow and numerical instability when the estimator is evaluated directly in probability space.

To address (i), we adopt the self-normalization principle from importance sampling, whereby unnormalized importance weights are divided by their empirical sum \citep{geweke1989bayesian,hesterberg1995weighted}. Because the coefficients induced by \(\partial_\pi D\) may be signed, as in the case of the forward KL divergence, we adapt this principle to obtain a signed version. Specifically, after computing the coefficients \(\omega_k\) in Line~6 of Algorithm~\ref{algo:pdl}, we normalize them by their total absolute mass as
\(
  \omega_k
  \gets
  \omega_k / {\sum_{j=1}^K |\omega_j|}.
\)

To address (ii), we use the score-function gradient form
\citep{glynn1990likelihood,williams1992simple,mohamed2020monte}, which expresses probability derivatives through
\(\nabla_\theta \log \pi_\theta\) and allows probability ratios to be evaluated in the log domain. As a result, \eqref{eq:saagrad} avoids direct computation of exponentially small probability masses and improves numerical stability.



\subsection[Temperature Scheduling]{Temperature \(\tau\) Scheduling}

Two common temperature scheduling strategies are relevant for PDL. The first is a geometric schedule,
\(
  \tau_n=\lambda^n\tau_0,
\)
for some \(\lambda\in(0,1)\) and initial temperature \(\tau_0\), as commonly used in annealing-type methods~\citep{kirkpatrick1983optimization,aarts1989simulated}. This schedule ensures predictable monotone concentration of the reference \(\mathbb P_\tau\), but it does not adapt to the quality or scale of the sampled solutions. The second is a quantile-based schedule, commonly used in CEM~\citep{rubinstein1999cross,de2005tutorial}, where the temperature is determined from quantiles of the sampled surrogate energies \(\{\tilde f(x_k)\}_{k=1}^K\). This adapts the reference distribution to the current sample batch, but its concentration may stall when the sampled surrogate values stop improving.

We therefore use a combined quantile-geometric schedule. For each iteration \(n\geq 0\), a candidate temperature \(\tau_n^{\mathrm{quant}}\) is computed from the current batch, while a geometric decay initialized at \(\tau_0:=\tau_0^{\mathrm{quant}}\) enforces continued monotone concentration: 
\[
  \tau_n
  =
  \mathrm{clip}\left( \tau_n^{\mathrm{quant}}, \tau_{\min}, \lambda\tau_{n-1}\right)
\quad n \geq 1.
\]
Given normalized sampled energies
\(\{\tilde f(x_k)\}_{k=1}^K\) with
\(\min_{x\in\mathcal X}\tilde f(x)\geq 0\), let
\(
  \tilde f_{\min}
  =
  \min_k \tilde f(x_k)\) and \(
  \tilde f_q
  =
  \operatorname{Quantile}_q
  (\{\tilde f(x_k)\}_{k=1}^K)\).
We choose \(\tau_n^{\mathrm{quant}}\) so that the reference weight assigned to \(\tilde f_{\min}\) is \(r>1\) times that assigned to \(\tilde f_q\). For the stretched-exponential reference weight of order \(p\) in Definition~\ref{def:stretched-weight}, this yields
\[
  \tau_n^{\mathrm{quant}}
  =
  \left(
    \frac{
      \tilde f_q^p-\tilde f_{\min}^p
    }{
      \log r
    }
  \right)^{1/p}.
\]
Thus, \(r\) directly controls the concentration of the reference around the best observed samples. Once \(\tau_n\) reaches \(\tau_{\min}\), the temperature remains fixed, placing subsequent iterations within the fixed-temperature regime analyzed in Section~\ref{sec:ddsgd-convergence}.

\subsection{Stopping Criterion}
In addition to stopping at a prescribed time limit or maximum number of iterations, our implementation monitors whether the distribution \(\pi_\theta\), as well as the continuous variables \(y\) when present, has stalled, and triggers early termination when little further change is observed. As the reference distribution \(\mathbb P_\tau\) becomes highly concentrated once \(\tau\) reaches \(\tau_{\min}\), and \(\pi_\theta\) is progressively attracted toward this reference, the entropy \(H(\pi_\theta)\) provides another useful indicator of convergence. In particular, a small entropy indicates that \(\pi_\theta\) has become highly concentrated, suggesting that further sampling is unlikely to substantially change the incumbent solution. In the current implementation, however, \(H(\pi_\theta)\) is used only as a diagnostic measure rather than as a stopping criterion.

\subsection{GPU Efficiency Tricks}
Since PDL algorithms are implemented in PyTorch for GPU acceleration, we use several techniques to improve runtime and memory efficiency. For runtime efficiency, the main principles are: (i) using direct score-gradient updates for parameterizations when available, avoiding PyTorch autograd through sampled log-probabilities; (ii) precomputing and reusing intermediate values that are needed repeatedly; and (iii) minimizing CPU synchronization.
For memory efficiency, the main technique is chunking. Rather than materializing all samples, scenario quantities, or coefficient tensors at once, large sample batches and scenario batches are evaluated in controlled chunks. This reduces peak memory while preserving GPU throughput. 


%% file: sections/7_expr.tex
\section{Numerical Experiments}
\label{sec:expr}
We evaluate PDL through three computational case studies, each designed to highlight a different aspect of the framework. The first uses bounded-integer multidimensional knapsack instances to compare parameterization-discrepancy choices. The second uses the three-dimensional (3D) assignment problem with an additional covering constraint to demonstrate equality-aware parameterization and the gains from terminal local search. The third uses a stochastic MIP with continuous variables to demonstrate the block updates in Algorithm~\ref{algo:pdlmip}. All three studies compare with Gurobi to evaluate solution quality and scalability under fixed time limits.

All experiments were conducted on a Linux workstation running Ubuntu 24.04.4 with kernel 6.8.0. The PDL experiments used one NVIDIA GeForce RTX 4070 Ti GPU with 12 GB memory; CPU baseline runs used 14 logical cores from an Intel Core i7-14700 processor with a 24 GB memory limit, matching the standard memory ratio used in the literature \citep{lu2023cupdlp,wei2025gfors}. We implemented PDL in Python 3.14.6 with PyTorch 2.11.0 for GPU acceleration, and used Gurobi 13.0.2 as the baseline MIP solver. 

For each instance configuration (corresponding to one row in each table), we generated three independent random instances and report the average results. For each instance, the algorithm-demonstration experiments (Table~\ref{tab:cs1_expt1}) were run with a wall-clock time limit of \(600\) seconds, whereas the full comparison experiments against the baseline used a wall-clock time limit of \(1{,}800\) seconds. For all runtime comparisons, we report only the kernel solve time, excluding instance construction and post-processing time so that the reported values more directly reflect the computational cost of the optimization methods themselves.

Because the synthetic instances lack contextual information, we construct problem-specific features from the instance data. The neural-logit model uses two hidden layers of width \(64\) with ReLU activations.


\subsection{Case Study 1: Multidimensional Knapsack Problem}
We consider the following bounded-integer multidimensional knapsack problem,
\[
\begin{aligned}
    \max_{x \in \mathbb{Z}_+^n} \quad & \iprod{c,x} \\
    \mathrm{s.t.} \quad & Ax \leq b, \\
                       & 0 \leq x_i \leq u_i, \quad i \in [n].
\end{aligned}
\]
Here \(x_i\) is the integer number of units selected for item \(i\), \(c_i\) is its profit, and each row of \(Ax \leq b\) represents one resource capacity constraint. We design the associated surrogate through the exact penalty method introduced in Section~\ref{sec:energy}.

\input{tables/cs1_expt1.tex}
Instances are generated as sparse random packing problems. We set the number of constraints as \(m=n/4\). For each variable $x_i$, we uniformly select eight of $m$ constraints to assign nonzero coefficients, with $A_{ij}$ uniformly drawn from \(\{1,\ldots,10\}\). Upper bounds $u_i$ are uniformly sampled from \(\{3,\ldots,10\}\), and capacities are set by \(b_j=\max\{1, \lfloor 0.35(Au)_j \xi_j \rfloor\}\) with \(\xi_j \sim \mathrm{Unif}([0.9,1.1])\). Profits are weakly correlated with resource usage: if \(\bar a_i\) is the mean nonzero weight in column \(i\) of $A$, then \(c_i=\max\{1,\lfloor 5\bar a_i+\eta_i\rfloor\}\) with \(\eta_i\) uniformly selected from \(\{0,\ldots,10\}\). 
We initialize \(\theta_0\) to assign greater probability to low-density solutions, which are more likely to satisfy the packing constraints.


\paragraph{Comparison of PDL Design Variants.} 
Table~\ref{tab:cs1_expt1} compares nine PDL variants formed by three parameterizations and three discrepancy choices. We evaluate them on three representative instance sizes, \(n=2^8, 2^{14}, 2^{20}\), to assess their behavior across small, medium, and large regimes. For each variant, the table reports the incumbent objective value, normalized solve time, number of outer iterations per second, peak GPU memory usage, and terminal entropy \(H(\pi_\theta)\), which measures the concentration of the final sampling distribution. In particular, the neural-logit parameterization uses standardized per-item features consisting of profit, upper bound, raw and capacity-normalized column weight statistics, sparsity/density measures, profit-to-weight ratios, and upper-bound-scaled profit/resource terms.

The results suggest that different designs are preferable across problem scales.
For small and medium instances, the simpler product- and quadratic-logit parameterizations are generally more effective, with reverse \(\chi^2\) often attaining the best objective. At the largest scale, however, the more expressive neural-logit parameterization becomes competitive and, with forward KL, achieves the best objective overall. Runtime differences broadly reflect the iteration rates: variants with higher steps per second generally terminate earlier when convergence is reached.

\input{tables/cs1_expt3.tex}
Instance size also has a clear computational impact. As \(n\) increases, the iteration rate decreases sharply and GPU memory usage grows substantially. Under the fixed time limit, larger instances also tend to terminate with higher entropy, indicating that the sampling distributions have had less opportunity to concentrate. This suggests that additional computational time may further improve solution quality by allowing \(\pi_\theta\) to concentrate more fully, as observed on the small and medium instances.

\paragraph{Performance Comparison with Gurobi.} 
Table~\ref{tab:cs1_expt3} compares the two strongest PDL configurations with Gurobi across seven problem scales under a wall-clock time limit of \(1{,}800\) seconds per instance. Gurobi attains the best objective on every instance for which it returns a solution, but product-logit with reverse \(\chi^2\) often reaches reasonably strong incumbents much earlier. For example, at \(n=2^{14}\), Gurobi obtains an objective of \(1{,}293{,}530\), while reverse \(\chi^2\) reaches \(1{,}085{,}925\) in only \(21.55\) seconds.

The relative performance of the two PDL variants also varies with scale. Reverse \(\chi^2\) is stronger on the small and medium instances, whereas forward KL becomes more competitive as the problem grows and is clearly better on the two largest scales. At \(n=2^{18}\), forward KL improves the objective from \(11{,}712{,}621\) to \(14{,}735{,}788\), and at \(n=2^{20}\) it reaches \(58{,}840{,}580\) compared with \(36{,}828{,}032\) for reverse \(\chi^2\), 
while Gurobi reaches the memory limit after \(1{,}129\) seconds without finding a feasible solution.
Finally, compared with the corresponding configurations in Table~\ref{tab:cs1_expt1}, both PDL variants achieve higher objective values under the longer runtime budget, indicating that additional computational time continues to improve the sampled incumbents.

\subsection{Case Study 2: Covering-Constrained 3D Assignment}
This case study focuses on the following 3D assignment problem with a covering constraint
\begin{equation}
  \label{eq:3d}
  \begin{aligned}
  \min_{x \in \{0,1\}^{n^3}} &~ \iprod{c, x} \\
  \mathrm{s.t.} &~ \sum_{(j,k) \in [n]^2} x_{ijk} = 1, \quad \forall i \in [n]\\
                &~ \sum_{(i,k) \in [n]^2} x_{ijk} = 1, \quad \forall j \in [n]\\
                &~ \sum_{(i,j) \in [n]^2} x_{ijk} = 1, \quad \forall k \in [n]\\
                &~ \iprod{a, x} \geq b,
  \end{aligned}
\end{equation}
where \(c_{ijk}\) is the assignment cost and \(a_{ijk}\) is a resource score whose total must exceed the covering threshold \(b\). 
For each triple \((i,j,k)\), we draw \(U_1,U_2,U_3 \sim \mathrm{Unif}\{1,\ldots,20\}\) and set \(c_{ijk}=\max\{U_1,U_2,U_3\}\). We then draw \(\eta_{ijk} \sim \mathrm{Unif}\{-4,\ldots,4\}\) and set \(a_{ijk}=\mathrm{clip}(c_{ijk}+\eta_{ijk},1,20)\), making the covering score positively correlated with cost. 
The threshold \(b\) is set to the empirical \(0.75\)-quantile of total resource usage over \(500\) randomly generated permutation assignments, making the covering constraint less likely to be redundant.

\paragraph{Equality Handling through Parameterization.}
To handle the assignment equalities, we parameterize distributions directly over feasible assignments. Let \(S_n\) be the set of permutations of $[n]$, and represent an assignment by two permutations \((\alpha,\beta)\in S_n^2\), where the selected triples are \((i,\alpha_i,\beta_i)\) for \(i\in[n]\). 
Letting \(x_{i\alpha_i\beta_i}=1\) and all other entries be zero, \(\mathcal X:=S_n^2\) provides an equivalent representation of the feasible 3D assignment space, allowing~\eqref{eq:3d} to be reformulated as
\[
  \begin{aligned}
    \min_{(\alpha, \beta) \in S_n^2} &~ \sum_{i \in [n]} c_{i \alpha_i \beta_i}x_{i \alpha_i \beta_i} \\
    \mathrm{s.t.} &~ \sum_{i \in [n]} a_{i \alpha_i \beta_i}x_{i \alpha_i \beta_i} \geq b.
  \end{aligned}
\]
To parameterize \(\mathcal P(S_n^2)\), we build on the following Plackett-Luce distribution~\citep{plackett1975analysis,luce2005individual} over \(S_n\), which is parameterized by positive item scores \(w_i>0\) and defined as
\[
\pi_w(\alpha)
:=
\frac{w_{\alpha_1}}{\sum_{j=1}^n w_j}
\cdot
\frac{w_{\alpha_2}}{\sum_{j\notin\{\alpha_1\}} w_j}
\cdot
\frac{w_{\alpha_3}}{\sum_{j\notin\{\alpha_1, \alpha_2\}} w_j}
\cdots
\frac{w_{\alpha_n}}{w_{\alpha_n}}
=
\prod_{i=1}^n
\frac{w_{\alpha_i}}
{\sum_{j\notin\{\alpha_1,\ldots,\alpha_{i-1}\}} w_j}.
\]
Then, we construct two parameterizations of \(\mathcal P(S_n^2)\) that enforce the assignment equalities.
\begin{itemize}
  \item \textbf{Independent Plackett-Luce (IPL) Parameterization.} We define $\theta=(\theta^J, \theta^K) \in \mathbb R^n \times \mathbb R^n$ and design the parameterization over $\mathcal X=S_n^2$ as two independent Plackett-Luce distributions:
\[
  \pi^{\mathrm{IPL}}_\theta(\alpha,\beta)
=
\prod_{i=1}^n
\frac{\exp(\theta^J_{\alpha_i})}
{\sum_{j\notin \{\alpha_1,\ldots,\alpha_{i-1}\}}\exp(\theta^J_j)}
\prod_{i=1}^n
\frac{\exp(\theta^K_{\beta_i})}
{\sum_{k\notin \{\beta_1,\ldots,\beta_{i-1}\}}\exp(\theta^K_k)}.
\]

\item \textbf{Sequential Plackett-Luce (SPL) Parameterization.}
While the IPL parameterization is a valid distribution over \(S_n^2\), it treats the two permutations independently and therefore cannot model correlations between the second and third indices. The SPL parameterization addresses this by using a row-wise conditional factorization:
\[
  \pi^{\mathrm{SPL}}_\theta(\alpha,\beta)
=
\prod_{i=1}^n
\Prob_{\theta}(\alpha_i \mid \alpha_{<i})
\Prob_{\theta}(\beta_i \mid \alpha_{\leq i}, \beta_{<i}),
\]
where the two conditionals are Plackett-Luce choices over the remaining unused indices:
\[
\begin{aligned}
\Prob_\theta(\alpha_i=j \mid \alpha_{<i})
&=
\frac{\exp(s^J_i(j))}
{
\sum_{j'\notin\{\alpha_1,\ldots,\alpha_{i-1}\}}
\exp(s^J_i(j'))
}, \\
\Prob_\theta(\beta_i=k \mid \alpha_{\leq i},\beta_{<i})
&=
\frac{\exp(s^K_i(\alpha_i,k))}
{
\sum_{k'\notin\{\beta_1,\ldots,\beta_{i-1}\}}
\exp(s^K_i(\alpha_i,k'))
}.
\end{aligned}
\]
The score \(s^J_i(j)\) captures the compatibility between row \(i\) and the second index \(j\), while \(s^K_i(j,k)\) captures how the third index \(k\) interacts with both \(i\) and the selected second index \(j\). Specifically, we use the following low-rank factorized scores to represent these interactions:
\[
s^J_i(j)
=
\iprod{u_i,v_j}
+ b^J_j, \quad
s^K_i(j,k)=
\iprod{p_i,q_k}
+ \iprod{r_j,t_k}
+ \iprod{w_i\odot z_j,h_k}
+ b^K_k,
\]
where \(\theta=(u,v,p,q,r,t,w,z,h,b^J,b^K)\) collects all embedding factors and biases.
\end{itemize}


Although both parameterizations define valid distributions over \(\mathcal X:=S_n^2\), IPL uses a lower-dimensional parameterization, whereas SPL offers greater expressiveness at the cost of a higher parameter dimension.


\input{tables/cs2_expt1.tex}
\paragraph{Comparison of Assignment Parameterizations.} 
Table~\ref{tab:cs2_expt1} compares Gurobi with two PDL algorithms under the IPL and SPL parameterizations and reverse KL divergence. SPL consistently yields better objective values than IPL across all problem sizes, with particularly clear improvements at small and medium scales. For example, at \(n=256\) and \(512\), SPL reduces the objective from \(3{,}808\) to \(3{,}087\) and from \(7{,}716\) to \(6{,}241\), respectively. This suggests that the additional dependence captured by SPL can translate into better solutions. Its higher parameter dimension, however, becomes more computationally demanding at larger scales, where SPL reaches the time limit while IPL terminates earlier. The fact that SPL remains active at the cutoff also suggests the possibility of further improvement under a larger time budget.
Gurobi remains substantially stronger on the smaller instances, obtaining both lower objectives and shorter solution times at \(n=64\) and \(128\). As the problem size increases, however, Gurobi no longer returns a feasible solution from \(n=256\) onward, whereas both PDL parameterizations continue to produce feasible assignments through \(n=1{,}024\).

In Appendix~\ref{sec:ls}, we report an additional experiment demonstrating the value of augmenting PDL with a computationally cheap terminal local search.


\subsection{Case Study 3: Stochastic Location-Allocation MIP}
This case study demonstrates Algorithm~\ref{algo:pdlmip} on a stochastic MIP with continuous variables. We consider a location-allocation problem on a bipartite graph \(G=((V_s,V_d),E)\), where \(|V_s|=|V_d|=n\). The integer variable \(x_i\in\{0,\ldots,u_i\}\) denotes the installation capacity at supply node \(i \in V_s\), and the continuous \(y_e\ge 0\) denotes the shared service allocation on arc \(e\in E\). 
Given a finite scenario set \(\Xi\), where each scenario \(\xi\in\Xi\) specifies demand \(d_j(\xi)\) and capacity \(\gamma_{i}(\xi)\), we solve
\[
\begin{aligned}
\min_{x,y,s,r \geq 0}\quad
& \iprod{c_1, x}
+ \iprod{c_2, y}
+ \frac{1}{|\Xi|}\sum_{\xi\in\Xi}
\left(
  \iprod{\rho, s(\xi)}
+ \iprod{\lambda, r(\xi)}
\right) \\
\mathrm{s.t.}\quad
&  \gamma_{i}(\xi)x_i + r_{i}(\xi) \ge \sum_{e:\,e=(i,j)} y_e,
\qquad \xi\in\Xi,\; i\in V_s \\
& \sum_{e:\,e=(i,j)} y_e + s_{j}(\xi) \ge d_{j}(\xi) ,
\qquad \xi\in\Xi,\; j\in V_d \\
& x_i\in\{0,\ldots, u_i\},\qquad i\in V_s,
\end{aligned}
\]
where \(c_1\) and \(c_2\) denote the installation and transportation costs, respectively; \(s(\xi)\) and \(r(\xi)\) are scenario-dependent slack recourse variables for demand shortfall and capacity overage; and \(\rho\) and \(\lambda\) denote the corresponding penalty vectors.

Instances are generated by placing supply and demand nodes uniformly in the unit square and connecting each demand node to its \(8\) nearest supply nodes. We use \(S=500\) scenarios. The upper bounds \(u_i\) are sampled uniformly from \(\{1,\ldots,5\}\). Nominal demands are sampled from \([5,25]\), with scenario-specific multipliers from \([0.75,1.25]\). Available capacity is modeled as \(\gamma_i(\xi)=\bar{\gamma}_i W_i(\xi)\), where \(W_i(\xi)\sim U([0.75,1.10])\) and the base capacities \(\bar{\gamma}_i\) are scaled so that the expected total capacity at $x_i = u_i$ is \(1.25\) times the mean total demand. Installation costs are proportional to capacity with multiplicative noise, and transportation costs are proportional to Euclidean arc distance.

\input{tables/cs3_expt1.tex}
\paragraph{Performance Comparison.}
Table~\ref{tab:cs3_expt1} compares Gurobi with two PDL variants using reverse KL divergence: a product-logit parameterization and a neural-logit parameterization. The product-logit model serves as a non-contextual baseline, while the neural-logit model uses \(13\) per-supply-node features: installation cost, capacity per unit, upper bound, cost per capacity, capacity overage penalty, outgoing degree, summary statistics of outgoing service costs, neighboring demand and penalty averages, and supply-node coordinates. 

Gurobi remains strongest on the smaller instances, attaining the best objective values through \(n=4{,}096\). As the problem size increases, however, it no longer returns a feasible solution, while both PDL variants continue to produce feasible solutions through \(n=131{,}072\). This highlights the scalability of PDL on the larger stochastic MIP instances.

Between the two PDL variants, Neural-RKL performs better on the smallest instances and again at the largest scale, while Product-RKL is generally stronger across the medium sizes. In particular, Product-RKL gives lower objectives from \(n=8{,}192\) through \(65{,}536\), whereas Neural-RKL attains the best PDL objective at \(n=131{,}072\). Moreover, the objective values scale approximately proportionally with \(n\), suggesting that PDL maintains broadly consistent solution quality as the problem size grows.

%% file: tables/cs1_expt1.tex
\begin{table}[!tbp]
\centering
\scriptsize
\begin{tabular}{cllrrrrr}
\toprule
$n$ & Parameterization & Divergence & Obj. & Time (s) & Steps/s
& GPU (GB) & $H(\pi_\theta)$ \\
\midrule
$2^8$
& Product-logit & Forward KL
& 18,413 & 12.41 & 306.2 & 0.010 & 0.002 \\
& & Reverse KL
& 18,707 & \textbf{1.85} & \textbf{434.2} & 0.010 & 0.005 \\
& & Reverse $\chi^2$
& \textbf{18,923} & 2.79 & 288.5 & 0.010 & 0.006 \\
\cmidrule(lr){2-8}
& Quadratic-logit & Forward KL
& 18,230 & 165.89 & 276.1 & 0.010 & 0.003 \\
& & Reverse KL
& 19,072 & \textbf{4.78} & \textbf{378.6} & 0.010 & 0.007 \\
& & Reverse $\chi^2$
& \textbf{19,474} & 8.97 & 267.6 & 0.010 & 0.007 \\
\cmidrule(lr){2-8}
& Neural-logit & Forward KL
& 15,710 & \textbf{202.51} & 205.9 & 0.019 & 0.118 \\
& & Reverse KL
& \textbf{17,028} & 600.00 & \textbf{242.8} & 0.019 & 0.195 \\
& & Reverse $\chi^2$
& 16,994 & 600.00 & 217.3 & 0.019 & 0.148 \\
\midrule
$2^{14}$
& Product-logit & Forward KL
& 887,137 & 600.01 & 168.7 & 0.184 & 0.050 \\
& & Reverse KL
& 852,074 & \textbf{12.65} & \textbf{272.5} & 0.153 & 0.000 \\
& & Reverse $\chi^2$
& \textbf{1,087,181} & 20.99 & 168.7 & 0.184 & 0.000 \\
\cmidrule(lr){2-8}
& Quadratic-logit & Forward KL
& 885,898 & 600.00 & 179.7 & 0.186 & 0.007 \\
& & Reverse KL
& 849,571 & \textbf{418.31} & \textbf{255.5} & 0.155 & 0.000 \\
& & Reverse $\chi^2$
& \textbf{895,106} & 600.00 & 177.0 & 0.186 & 0.000 \\
\cmidrule(lr){2-8}
& Neural-logit & Forward KL
& 840,328 & \textbf{5.66} & 146.2 & 0.199 & 0.031 \\
& & Reverse KL
& 877,410 & 600.00 & \textbf{180.8} & 0.160 & 0.352 \\
& & Reverse $\chi^2$
& \textbf{877,893} & 600.00 & 152.0 & 0.199 & 0.353 \\
\midrule
$2^{20}$
& Product-logit & Forward KL
& 29,956,319 & 600.07 & 2.7 & 8.632 & 0.016 \\
& & Reverse KL
& 26,674,300 & \textbf{578.11} & \textbf{4.6} & 8.382 & 0.000 \\
& & Reverse $\chi^2$
& \textbf{35,437,281} & 600.27 & 2.7 & 8.632 & 0.068 \\
\cmidrule(lr){2-8}
& Quadratic-logit & Forward KL
& \textbf{27,852,823} & 600.21 & 2.7 & 8.780 & 0.041 \\
& & Reverse KL
& 26,674,307 & \textbf{600.05} & \textbf{4.7} & 8.495 & 0.002 \\
& & Reverse $\chi^2$
& 26,674,307 & 600.26 & 2.7 & 8.780 & 0.064 \\
\cmidrule(lr){2-8}
& Neural-logit & Forward KL
& \textbf{39,537,792} & \textbf{579.41} & 2.3 & 9.067 & 0.125 \\
& & Reverse KL
& 32,110,209 & 600.05 & \textbf{3.4} & 8.274 & 0.675 \\
& & Reverse $\chi^2$
& 32,252,747 & 600.19 & 2.3 & 9.067 & 0.659 \\
\bottomrule
\end{tabular}
\caption{Performance across parameterizations and divergence choices for Case Study 1 under stochastic gradient updates. Bold values indicate the largest objective, shortest runtime, and highest iteration rate within each section. Steps/s denotes outer iterations per second, GPU (GB) the peak GPU memory usage, and $H(\pi_\theta)$ the entropy of the final distribution.}
\label{tab:cs1_expt1}
\end{table}

%% file: tables/cs1_expt3.tex
\begin{table}[!bp]
\centering
\scriptsize
\begin{tabular}{rrrrrrr}
\toprule
$n$ & \multicolumn{3}{c}{Objective}
& \multicolumn{3}{c}{Time (s)} \\
\cmidrule(lr){2-4}\cmidrule(lr){5-7}
& Gurobi & Product-R-$\chi^2$ & Product-FKL
& Gurobi & Product-R-$\chi^2$ & Product-FKL \\
\midrule
$2^8$
& \textbf{20,634}
& 19,119
& 17,335
& 1,427.90
& 602.32
& \textbf{9.49} \\
$2^{10}$
& \textbf{81,839}
& 72,575
& 66,022
& 1,800.01
& \textbf{4.22}
& 36.46 \\
$2^{12}$
& \textbf{323,033}
& 280,014
& 244,155
& 1,800.02
& \textbf{6.57}
& 1,800.00 \\
$2^{14}$
& \textbf{1,293,530}
& 1,085,925
& 911,759
& 1,800.17
& \textbf{21.55}
& 1,800.01 \\
$2^{16}$
& \textbf{5,161,748}
& 3,808,533
& 3,690,172
& 1,805.40
& \textbf{1,800.01}
& 1,800.02 \\
$2^{18}$
& \textbf{16,330,455}
& 11,712,621
& 14,735,788
& 1,805.60
& \textbf{1,800.04}
& 1,800.06 \\
$2^{20}$
& --
& 36,828,032
& \textbf{58,840,580}
& \textbf{1,129.74}
& 1,800.14
& 1,800.16 \\
\bottomrule
\end{tabular}
\caption{Comparison of Gurobi and product-logit with reverse $\chi^2$ and forward KL divergences for Case Study~1. Bold values indicate the highest objective and shortest runtime in each row.}

\label{tab:cs1_expt3}
\end{table}

%% file: tables/cs2_expt1.tex
\begin{table}[!htbp]
\centering
\scriptsize
\begin{tabular}{rrrrrrr}
\toprule
$n$
& \multicolumn{3}{c}{Objective}
& \multicolumn{3}{c}{Time (s)} \\
\cmidrule(lr){2-4}\cmidrule(lr){5-7}
& Gurobi & IPL & SPL
& Gurobi & IPL & SPL \\
\midrule
64
& \textbf{738}
& 901
& 759
& \textbf{1.88}
& 300.07
& 64.42 \\
128
& \textbf{1,466}
& 1,865
& 1,529
& \textbf{29.36}
& 483.43
& 160.97 \\
256
& --
& 3,808
& \textbf{3,087}
& \textbf{55.29}
& 833.86
& 452.97 \\
512
& --
& 7,716
& \textbf{6,241}
& --
& \textbf{843.72}
& 1,380.07 \\
768
& --
& 11,637
& \textbf{10,902}
& --
& \textbf{1,482.19}
& 1,800.17 \\
1,024
& --
& 15,586
& \textbf{15,504}
& --
& \textbf{1,676.68}
& 1,800.42 \\
\bottomrule
\end{tabular}
\caption{Comparison of Gurobi, IPL with reverse KL, and SPL with reverse KL for Case Study~2. Bold values indicate the lowest objective and shortest runtime in each row.}
\label{tab:cs2_expt1}
\end{table}

%% file: tables/cs3_expt1.tex
\begin{table}[!tbp]
\centering
\scriptsize
\begin{tabular}{rrrrrrr}
\toprule
$n$
& \multicolumn{3}{c}{Objective}
& \multicolumn{3}{c}{Time (s)} \\
\cmidrule(lr){2-4}\cmidrule(lr){5-7}
& Gurobi
& Product-RKL
& Neural-RKL
& Gurobi
& Product-RKL
& Neural-RKL \\
\midrule
1,024
& \textbf{532}
& 567
& 553
& 84.42
& \textbf{8.11}
& 1,222.05 \\
2,048
& \textbf{1,030}
& 1,115
& 1,078
& \textbf{157.68}
& 1,800.00
& 1,800.00 \\
4,096
& \textbf{2,041}
& 2,305
& 2,244
& \textbf{329.55}
& 1,800.00
& 1,800.00 \\
8,192
& --
& \textbf{4,763}
& 5,181
& 207.18
& 1,800.00
& 1,800.00 \\
16,384
& --
& \textbf{10,721}
& 11,421
& 136.64
& 1,800.00
& 1,800.00 \\
32,768
& --
& \textbf{25,333}
& 25,521
& 47.39
& \textbf{123.36}
& 1,206.40 \\
65,536
& --
& \textbf{53,266}
& 60,532
& --
& \textbf{126.64}
& 1,800.02 \\
131,072
& --
& 107,645
& \textbf{97,787}
& --
& \textbf{178.72}
& 1,234.02 \\
\bottomrule
\end{tabular}
\caption{Comparison of Gurobi and PDL under product-logit and neural-logit parameterizations with reverse KL for Case Study~3. Bold values indicate the lowest objective and shortest runtime in each row.}

\label{tab:cs3_expt1}
\end{table}

%% file: sections/8_conclusion.tex
\section{Conclusion}
\label{sec:conc}

This paper introduced PDL, a distributional optimization framework for discrete and mixed-integer optimization that searches over parameterized probability distributions rather than individual solutions. The framework combines discrepancy-based objectives, flexible parameterizations, and reference measures. Our numerical experiments illustrate these design choices across several problem classes and suggest that PDL can provide a flexible and scalable alternative for large-scale problems where conventional exact optimization becomes computationally challenging. At the same time, the results highlight that the effectiveness of individual PDL designs remains problem-dependent, leaving several directions for further exploration.

First, although PDL generates upper bounds through sampled incumbents and almost surely discovers an optimum in finite time when assigned positive probability, it lacks a dedicated lower-bounding mechanism. It could be paired asynchronously with a branch-and-bound solver configured for bound improvement, but whether other dedicated lower-bounding methods could complement PDL more effectively remains open. 
Second, the block DD-PSG method in Algorithm~\ref{algo:pdlmip} alternates between $\theta$ and $y$ gradient updates. 
Identifying sufficient conditions for its convergence is also an interesting future direction.


%% file: sections/9_proof.tex
\section{Mathematical Proofs}
\label{sec:proofs}
\renewcommand{\theHequation}{appendix.\arabic{equation}}


\samplingtheorem*
\begin{proof}
By the variational characterization of total variation,
\[
\mathbb P_\tau(\mathcal X')-\pi_\theta(\mathcal X')
\leq |\pi_\theta(\mathcal X')-\mathbb P_\tau(\mathcal X')|
\leq \mathrm{TV}(\pi_\theta,\mathbb P_\tau)
\leq h(D(\pi_\theta,\mathbb P_\tau))
\leq h(\varepsilon),
\]
where the last inequality uses the fact that $h$ is non-decreasing. Rearranging gives the result.
\end{proof}

\boltzmannconcentration*
\begin{proof}
The equality of $\mathbb P_\tau(\mathcal X_\delta)$ follows from importance sampling under $\pi_\theta$. For the inequality,
\[
\mathbb P_\tau(\mathcal X_\delta)
=\frac{\sum_{x\in\mathcal X_\delta}e^{-\tilde f(x)/\tau}}{\sum_{x\in\mathcal X_\delta}e^{-\tilde f(x)/\tau}+\sum_{x\notin\mathcal X_\delta}e^{-\tilde f(x)/\tau}}
=\frac{1}{1+\frac{\sum_{x\notin\mathcal X_\delta}e^{-\tilde f(x)/\tau}}{\sum_{x\in\mathcal X_\delta}e^{-\tilde f(x)/\tau}}}
\geq\frac{1}{1+\frac{|\mathcal X|e^{-(\tilde f(x^\ast)+\delta)/\tau}}{e^{-\tilde f(x^\ast)/\tau}}},
\]
where the last inequality uses that $x\notin\mathcal X_\delta$ implies $\tilde f(x)>\tilde f(x^\ast)+\delta$ and that $x^\ast\in\mathcal X_\delta$. Simplifying yields the result.
\end{proof}

\tvdominance*
\begin{proof}
For forward KL, Pinsker's and Bretagnolle-Huber inequalities give the result.
For forward $\chi^2$, the Cauchy-Schwarz inequality yields
\[
2\,\mathrm{TV}(\mathbb P,\mathbb Q)
=\sum_x |\mathbb P(x)-\mathbb Q(x)|
\le
\left(\sum_x \mathbb Q(x)\right)^{1/2}
\left(\sum_x \frac{(\mathbb P(x)-\mathbb Q(x))^2}{\mathbb Q(x)}\right)^{1/2}
=\sqrt{\chi^2(\mathbb P\|\mathbb Q)},
\]
and hence
\(
\mathrm{TV}(\mathbb P,\mathbb Q)\le \sqrt{\chi^2(\mathbb P\|\mathbb Q)}/2.
\)
For CE,
\[
\begin{aligned}
1-\mathrm{TV}(\mathbb P,\mathbb Q)
&=\sum_x \min\{\mathbb P(x),\mathbb Q(x)\}
=\mathbb E_{\mathbb P}\!\left[\min\left\{1,\frac{\mathbb Q(x)}{\mathbb P(x)}\right\}\right]
\ge
\exp\!\left(
\mathbb E_{\mathbb P}
\left[
\log\min\left\{1,\frac{\mathbb Q(x)}{\mathbb P(x)}\right\}
\right]
\right)\\
&\ge
\exp\!\left(\mathbb E_{\mathbb P}[\log \mathbb Q(x)]\right)
=e^{-\mathrm{CE}(\mathbb P\|\mathbb Q)},
\end{aligned}
\]
where the inequalities follow from Jensen's inequality and
\(
\min\{1,\mathbb Q(x)/\mathbb P(x)\}\ge \mathbb Q(x)
\)
for \(\mathbb P(x)>0\).
Thus all three bounds follow.
\end{proof}





\mprpdlproposition*
\begin{proof}
A direct computation shows that the lifted problem is
\[
\begin{aligned}
\min_{\theta\in\Theta}D(\pi_\theta,\mathbb P)
&=\frac{1}{\tau}\min_{\theta\in[0,1]^n}\mathbb E_{\pi_\theta}[f(x)]+\log Z_\tau
=\frac{1}{\tau}\min_{\theta\in[0,1]^n}\mathbb E_{\pi_\theta}\left[\sum_{I\subseteq[n]}c_I\prod_{i\in I}x_i\right]+\log Z_\tau\\
&=\frac{1}{\tau}\min_{\theta\in[0,1]^n}\sum_{I\subseteq[n]}c_I\prod_{i\in I}\mathbb E_{\pi_\theta}[x_i]+\log Z_\tau
=\frac{1}{\tau}\min_{\theta\in[0,1]^n}\sum_{I\subseteq[n]}c_I\prod_{i\in I}\theta_i+\log Z_\tau,
\end{aligned}
\]
where the third equality holds because the product-Bernoulli parameterization $\pi_\theta$ factorizes across coordinates, and the fourth follows from $\mathbb E_{\pi_\theta}[x_i]=\theta_i$. Hence, up to the positive scaling $1/\tau$ and the additive constant $\log Z_\tau$, the lifted problem is equivalent to~\eqref{eq:mprlift}.
\end{proof}

\mprsamplingcorollary*
\begin{proof}
Applying Theorem~\ref{thm:sampling}, Propositions~\ref{prop:bolz} and \ref{prop:tvdominance} yields the concentration term together with the mismatch bound \(1-e^{-\varepsilon}\). Moreover, since
\(
\mathrm{CE}(\pi_\theta\|\mathbb P)-H(\pi_\theta)
=\mathrm{KL}(\pi_\theta\|\mathbb P),
\)
the KL-based TV-dominance bound can also be applied to the residual \(\varepsilon-H(\pi_\theta)\). Taking the tighter of these bounds completes the proof.
\end{proof}

\cemsamplingcorollary*
\begin{proof}
The mismatch term follows directly from the TV-dominance function \(h\) associated with the KL divergence. The result then follows by rewriting the concentration term as
\[
\begin{aligned}
\mathbb P_{\tau,\theta'}(\mathcal X_\delta)
&=\mathbb E_{\pi_{\theta'}}\left[\frac{\mathbb P_{\tau,\theta'}(x)}{\pi_{\theta'}(x)}\mathbb I(x\in\mathcal X_\delta)\right]\\
&=\mathbb E_{\pi_{\theta'}}\left[\frac{\mathbb I(\tilde f(x)\leq\tau)}{Z_{\tau,\theta'}}\mathbb I(x\in\mathcal X_\delta)\right],
\end{aligned}
\]
where the first equality follows from importance sampling.
\end{proof}

\gforspdlproposition*
\begin{proof}
For the product-Bernoulli parameterization,
\[
\operatorname{tr}(\operatorname{Cov}(\pi_\theta))=\iprod{\theta,1-\theta}.
\]
Using the Boltzmann reference with $\tau$ fixed at $1$ and expanding the surrogate gives
\begin{align*}
D(\pi_\theta,\mathbb P_\tau)
&=\mathbb E_{\pi_\theta}[\tilde f(x)]+\rho'\iprod{\theta,1-\theta}+\log Z_\tau\\
&=\mathbb E_{\pi_\theta}\left[\iprod{x,Qx}+\iprod{c,x}+\max_{u\geq0}\iprod{u,Kx+r}\right]+\rho'\iprod{\theta,1-\theta}+\log Z_\tau\\
&\geq\iprod{\mathbb E_{\pi_\theta}[xx^\top],Q}+\iprod{c,\theta}+\max_{u\geq0}\iprod{u,K\theta+r}+\rho'\iprod{\theta,1-\theta}+\log Z_\tau,
\end{align*}
where the inequality follows from
\[
\mathbb E_{\pi_\theta}\left[\max_{u\geq0}\iprod{u,Kx+r}\right]
\geq\max_{u\geq0}\iprod{u,K\theta+r}.
\]
Moreover, independence of the Bernoulli components yields
\[
\mathbb E_{\pi_\theta}[xx^\top]
=\theta\theta^\top+\operatorname{diag}\bigl(\theta\odot(1-\theta)\bigr).
\]
Therefore,
\begin{align*}
D(\pi_\theta,\mathbb P_\tau)
&\geq\iprod{\theta,Q\theta}+\iprod{c,\theta}+\max_{u\geq0}\iprod{u,K\theta+r}+\sum_{i\in[n]}(\rho'+Q_{ii})\theta_i(1-\theta_i)+\log Z_\tau\\
&\geq\iprod{\theta,Q\theta}+\iprod{c,\theta}+\max_{u\geq0}\iprod{u,K\theta+r}+(\rho'+Q_{\min})\iprod{\theta,1-\theta}+\log Z_\tau,
\end{align*}
where \(Q_{\min}:=\min_{i\in[n]}Q_{ii}\). Setting \(\rho:=\rho'+Q_{\min}\) recovers the objective in~\eqref{eq:gfors-saddle}, up to the additive constant \(\log Z_\tau\). Hence, formulation~\eqref{eq:gfors-saddle} is a lower approximation of the stated PDL scheme.
\end{proof}


\stretchedconcentration*
\begin{proof}
The equality follows from importance sampling under $\pi_\theta$. For the inequality, a derivation analogous to Proposition~\ref{prop:bolz} gives
\[
\begin{aligned}
\mathbb P_\tau(\mathcal X_\delta)
&=\frac{1}{\displaystyle 1+\frac{\sum_{x\notin\mathcal X_\delta}w_{\tau,p}(\tilde f(x))}{\sum_{x\in\mathcal X_\delta}w_{\tau,p}(\tilde f(x))}}
\geq\frac{1}{\displaystyle 1+\frac{|\mathcal X|w_{\tau,p}(\tilde f(x^*)+\delta)}{w_{\tau,p}(\tilde f(x^*))}}\\
&=\frac{1}{\displaystyle 1+|\mathcal X|\exp\left(-\frac{(\tilde f(x^*)+\delta)^p-(\tilde f(x^*))^p}{\tau^p}\right)}
\geq\frac{1}{1+|\mathcal X|e^{-(\delta/\tau)^p}},
\end{aligned}
\]
where the first inequality uses that $w_{\tau,p}$ is non-increasing and $\tilde f(x)>\tilde f(x^*)+\delta$ for $x\notin\mathcal X_\delta$, and the second uses $(\alpha+\delta)^p-\alpha^p\geq\delta^p$ for $p\geq1$ and $\alpha,\delta\geq0$.
\end{proof}

\reversechifixedpoint*
\begin{proof}
For fixed \(i\), collecting all terms of the objective in~\eqref{eq:chicem} according to the value of \(x_i\) gives
\[
\begin{aligned}
\sum_{x\in\mathcal X}\frac{\mathbb P_\tau(x)^2}{\prod_{i\in[n]}\theta_{i,x_i}}
&=\sum_{x\in\mathcal X}\frac{\mathbb P_\tau(x)^2}{\prod_{k\neq i}\theta_{k,x_k}}\theta_{i,x_i}^{-1}
=\sum_{x\in\mathcal X}\frac{\mathbb P_\tau(x)^2}{\prod_{k\neq i}\theta_{k,x_k}}\sum_{j\in[l_i,u_i]}\mathbb I(x_i=j)\theta_{ij}^{-1}\\
&=\sum_{j\in[l_i,u_i]}\sum_{x:x_i=j}\frac{\mathbb P_\tau(x)^2}{\prod_{k\neq i}\theta_{k,x_k}}\theta_{ij}^{-1}
=\sum_{j\in[l_i,u_i]}\frac{C_{ij}}{\theta_{ij}}.
\end{aligned}
\]
Since \(\mathbb P_\tau\) has full support, \(C_{ij}>0\) for every \(i,j\), and hence any optimizer must satisfy \(\theta_{ij}>0\). Given
\(
\Delta_i=\{\theta_i\geq0\mid\sum_{j\in[l_i,u_i]}\theta_{ij}=1\},
\)
the Lagrangian associated with \(i\) is
\[
\mathcal L_i(\theta_i,\lambda_i)
=\sum_{j\in[l_i,u_i]}\frac{C_{ij}}{\theta_{ij}}
+\lambda_i\left(\sum_{j\in[l_i,u_i]}\theta_{ij}-1\right).
\]
The stationarity conditions give
\[
-C_{ij}/\theta_{ij}^2+\lambda_i=0,
\qquad\text{and therefore}\qquad
\theta_{ij}=\sqrt{C_{ij}}/\sqrt{\lambda_i}.
\]
Using \(\sum_{j\in[l_i,u_i]}\theta_{ij}=1\) yields
\[
\sqrt{\lambda_i}=\sum_{j'\in[l_i,u_i]}\sqrt{C_{ij'}},
\]
and hence recovers the claimed identity.
\end{proof}

\convexitytheorem*

\begin{proof}
Define $B(r):=\phi(r)-r\phi'(r)$ for $r>0$, then $B'(r)=-r\phi''(r)\le 0$ where the inequality follows from the convexity of $\phi$.
For any $0<r<1$, the fundamental theorem of calculus gives
\[
B(r)
=
B(1)-\int_r^1 B'(t)\,dt
=
B(1)+\int_r^1 t\phi''(t)\,dt.
\]
It follows from \eqref{eq:conv} with $r=1$ that $0\le \int_0^1 t\phi''(t)\,dt \le \phi''(1)<\infty$.
Consequently, for every $0<r<1$, $B(1)\le B(r) \le B(1)+\phi''(1)<\infty$.
Hence, $B_0:=\lim_{r\downarrow 0}B(r)$ exists and is finite.
Moreover, for every $r>0$, it follows from the fact that $B'(r)\le 0$ and \eqref{eq:conv} that
\begin{equation}\label{ineq:Br}
    0\le B_0-B(r) =\int_0^r t\phi''(t)\,dt \le r^2\phi''(r).
\end{equation}
Define $G(s):=e^{-s}[\phi(e^s)-B_0]$.
Writing $r=e^s$, direct differentiation and \eqref{ineq:Br} yield
\[
G'(s) =e^{-s}[B_0-B(r)] \ge 0, \quad G''(s) =e^{-s}[r^2\phi''(r)-(B_0-B(r))] \ge 0.
\]
Therefore, $G$ is convex and non-decreasing.
By the definition of $G$, we can rewrite the divergence as
\[
D_\phi(\Prob_\tau\|\pi_\theta)
=
\sum_{x\in\mathcal X}
\pi_\theta(x)
\phi\left(
\frac{\Prob_\tau(x)}{\pi_\theta(x)}
\right)
=
\sum_{x\in\mathcal X}
\Prob_\tau(x)
G\left(
\log\Prob_\tau(x)-\log\pi_\theta(x)
\right)
+B_0.
\]
Since $\pi_\theta(x)$ is log-concave in $\theta$,
the function $\theta\mapsto \log\Prob_\tau(x)-\log\pi_\theta(x)$ is convex for every $x\in\mathcal X$.
As $G$ is convex and non-decreasing, the composition rule
implies that each term in the above sum is convex in $\theta$.
Consequently, $D_\phi(\Prob_\tau\|\pi_\theta)$ is convex in $\theta$.
Finally, if $r\phi''(r)$ is non-decreasing, then
\[
\int_0^r t\phi''(t)\,dt
\le
\int_0^r r\phi''(r)\,dt
=
r^2\phi''(r),
\]
which verifies \eqref{eq:conv} and completes the proof.
\end{proof}

\pdlgradienttheorem*
\begin{proof}
By the chain rule,
\[
\nabla_\theta D(\pi_\theta,\mathbb P_\tau)
=
\sum_{x\in\mathcal X}
\partial_\pi D(\pi_\theta,\mathbb P_\tau)(x)\,
\nabla_\theta \pi_\theta(x).
\]
Since \(\pi_\theta\) is fully supported,
\[
\nabla_\theta \pi_\theta(x)
=
\pi_\theta(x)\nabla_\theta\log\pi_\theta(x).
\]
Therefore,
\[
\nabla_\theta D(\pi_\theta,\mathbb P_\tau)
=
\sum_{x\in\mathcal X}
\pi_\theta(x)\,
\partial_\pi D(\pi_\theta,\mathbb P_\tau)(x)\,
\nabla_\theta\log\pi_\theta(x)
=
\mathbb E_{x\sim\pi_\theta}
\left[
\partial_\pi D(\pi_\theta,\mathbb P_\tau)(x)
\nabla_\theta\log\pi_\theta(x)
\right],
\]
which proves the result.
\end{proof}

\pdlsmooth*
\begin{proof}
Using the unbiasedness $\E_{x\sim\pi_\theta}[G_\tau(\theta,x)] = \nabla_\theta D(\pi_\theta,\Prob_\tau)$ and the triangle inequality, we obtain
\begin{align*}
    \| \nabla_\theta D(\pi_\theta,\Prob_\tau) - \nabla_\theta D(\pi_{\theta'},\Prob_\tau) \|
    & = \left\|\E_{\pi_\theta} [G_\tau(\theta,x)] - \E_{\pi_{\theta'}} [G_\tau(\theta',x)] \right\| \\
    & \le \left\| \E_{\pi_\theta} [G_\tau(\theta,x)] - \E_{\pi_{\theta'}} [G_\tau(\theta,x)]\right\| + \left\| \E_{\pi_{\theta'}} [G_\tau(\theta,x)-G_\tau(\theta',x)]\right\| \\
    & \le \beta W_1(\pi_\theta,\pi_{\theta'}) + L_G\|\theta-\theta'\|,
\end{align*}
where the last inequality is due to Lemma~\ref{lem:pdl-gradient-deviation} and \eqref{eq:oracle-theta-lip}.
Finally, the conclusion of the lemma immediately follows from
\eqref{eq:distribution-wasserstein-lip}.
\end{proof}

\splcomplexity*
\begin{proof}
It follows from Theorem~\ref{thm:pdl-convex-convergence} that the stated iteration complexity ensures
\[
    \E\left[D(\pi_{\bar\theta_T},\Prob_\tau)\right] \le D_\tau^\star+\varepsilon = s.
\]
For each realization of $\bar\theta_T$,
Theorem~\ref{thm:sampling} gives
\[
    \pi_{\bar\theta_T}(\mathcal X_\delta) \ge \Prob_\tau(\mathcal X_\delta) -  h\left(D(\pi_{\bar\theta_T},\Prob_\tau)\right).
\]
Taking expectations and using the concavity and monotonicity of $h$,
we obtain
\begin{align*}
    \Prob(X\in\mathcal X_\delta)
    &= \E\left[\pi_{\bar\theta_T}(\mathcal X_\delta)\right]
    \ge \Prob_\tau(\mathcal X_\delta) - \E\left[ h\left(D(\pi_{\bar\theta_T},\Prob_\tau)\right) \right] \\
    &\ge \Prob_\tau(\mathcal X_\delta) - h\left( \E\left[D(\pi_{\bar\theta_T},\Prob_\tau)\right] \right)
    \ge \Prob_\tau(\mathcal X_\delta)-h(s)
    \stackrel{\eqref{ineq:s-p}}\ge p,
\end{align*}
where the last inequality is due to the assumption \eqref{ineq:s-p} on $s$.
\end{proof}

%% file: sections/10_fr.tex
\section{Riemannian Gradient Update}
\label{sec:frupdate}
Different parameterizations induce distinct geometries on \(\Theta\), such as the product-simplex geometry and \(\mathbb R^n\times\mathbb R_{++}^n\) of the discrete Gaussian parameterization. Standard stochastic gradient updates need not respect the corresponding geometry of the probability space \(\mathcal P(\mathcal X)\). Motivated by the natural gradient method~\citep{amari1998natural,martens2020new,kakade2001natural}, this section develops a Riemannian gradient update that aligns parameter optimization with a prescribed geometry on \(\mathcal P(\mathcal X)\).

Let \(\Theta\) be a Riemannian manifold with tangent space \(T_\theta\Theta\) and metric \(g_\theta\). For a differentiable function \(f\), its Riemannian gradient \(\grad_g f(\theta)\in T_\theta\Theta\) is defined by
\begin{equation}
\label{eq:rgrad}
g_\theta(\grad_g f(\theta),v)=\mathrm{d}f(\theta)[v],
\qquad \forall v\in T_\theta\Theta.
\end{equation}
A retraction \(R_\theta:T_\theta\Theta\to\Theta\) maps a tangent direction back to the manifold and yields the update
\(
\theta^+=R_\theta(-\eta\grad_g f(\theta)),
\)
where \(\eta\geq 0\) is the step size. We refer to \citet{absil2008optimization} and \citet{boumal2023introduction} for background on Riemannian optimization.

\subsection{Riemannian Gradient Formula for PDL}
The following result is the Riemannian counterpart of Theorem~\ref{thm:pdl-gradient} and follows directly from the manifold chain rule and the definition of the Riemannian gradient.
It allows Algorithms~\ref{algo:pdl} and~\ref{algo:pdlmip} to use Riemannian updates by replacing the Euclidean gradient and projection with a Riemannian gradient and retraction.

\begin{proposition}[Riemannian gradient for PDL]
\label{prop:pdl-riemannian-gradient}
Consider a PDL scheme \((\pi_\theta,D,\mathbb P_\tau)\), where \(\Theta\) is a Riemannian manifold with metric \(g\), and define \(F(\theta):=D(\pi_\theta,\mathbb P_\tau)\). For every fully supported \(\pi_\theta\in\mathcal P(\mathcal X)\),
\begin{equation}
\label{eq:pdlgradriem}
\operatorname{grad}_g F(\theta)
=
\mathbb E_{x\sim\pi_\theta}
\left[
\partial_\pi D(\pi_\theta,\mathbb P_\tau)(x)
\operatorname{grad}_g\log\pi_\theta(x)
\right],
\end{equation}
where \(\partial_\pi D(\pi_\theta,\mathbb P_\tau)(x)\) is the derivative of \(D\) with respect to its first argument, evaluated at \(x\in\mathcal X\).
\end{proposition}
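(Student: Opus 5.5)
The plan is to derive the formula from the defining identity of the Riemannian gradient~\eqref{eq:rgrad}, the manifold chain rule, and the Euclidean computation in the proof of Theorem~\ref{thm:pdl-gradient}, using linearity of the metric to move the expectation outside.

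Fix $\theta\in\Theta$ with $\pi_\theta$ fully supported and a tangent vector $v\in T_\theta\Theta$. Since $\mathcal X$ is finite, view $F$ as the composition $\theta\mapsto(\pi_\theta(x))_{x\in\mathcal X}\mapsto D(\cdot,\mathbb P_\tau)$. By the chain rule for differentials on manifolds,
\[
\mathrm{d}F(\theta)[v]=\sum_{x\in\mathcal X}\partial_\pi D(\pi_\theta,\mathbb P_\tau)(x)\,\mathrm{d}\pi_\theta(x)[v].
\]
Full support gives $\pi_\theta(x)>0$, so $\log\pi_\theta(x)$ is differentiable near $\theta$ and $\mathrm{d}\pi_\theta(x)[v]=\pi_\theta(x)\,\mathrm{d}\log\pi_\theta(x)[v]$. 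Applying~\eqref{eq:rgrad} to the scalar function $\log\pi_\theta(x)$ gives
\[
\mathrm{d}\log\pi_\theta(x)[v]=g_\theta\bigl(\operatorname{grad}_g\log\pi_\theta(x),v\bigr).
\]

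Substituting, and using that $g_\theta(\cdot,v)$ is linear on the finite-dimensional space $T_\theta\Theta$ while the sum over $\mathcal X$ is finite, we get
\[
\mathrm{d}F(\theta)[v]=g_\theta\Bigl(\sum_{x}\pi_\theta(x)\,\partial_\pi D(\pi_\theta,\mathbb P_\tau)(x)\,\operatorname{grad}_g\log\pi_\theta(x),\,v\Bigr).
\]
The first argument is exactly the right-hand side of~\eqref{eq:pdlgradriem}, and it lies in $T_\theta\Theta$ because it is a finite linear combination of tangent vectors. The vector satisfying~\eqref{eq:rgrad} is unique, since $g_\theta$ is nondegenerate. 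Hence this vector equals $\operatorname{grad}_g F(\theta)$.

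There is no real obstacle here. The only points to state explicitly are these: differentiability of $D$ in its first argument, together with smoothness of $\theta\mapsto\pi_\theta(x)$, justifies the chain rule, and full support justifies taking the logarithm. Finiteness of $\mathcal X$ removes any concern about interchanging the expectation and the metric. Alternatively, one could write $\operatorname{grad}_g=G(\theta)^{-1}\nabla$ in local coordinates and apply Theorem~\ref{thm:pdl-gradient} directly, but the coordinate-free argument avoids choosing a chart.
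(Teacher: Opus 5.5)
Your proof is correct and takes the same route as the paper, which only remarks that the formula follows from the manifold chain rule and the defining identity \eqref{eq:rgrad} of the Riemannian gradient; you spell this out, using linearity of $g_\theta$ to combine the terms and nondegeneracy of $g_\theta$ to identify the resulting vector as $\operatorname{grad}_g F(\theta)$. You also make explicit two points the paper leaves implicit: full support is needed to take the logarithm, and finiteness of $\mathcal X$ lets you move the sum inside the metric.
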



\subsection{Fisher-Rao Gradient Update}
\label{sec:fr}
The Fisher-Rao metric provides a standard geometry on \(\mathcal P(\mathcal X)\)~\citep{amari1998natural,amari2016information}:
\begin{equation}
\label{eq:frmetric}
g^{\mathrm{FR}}_\pi(u,v)
=\sum_{x\in\mathcal X}\frac{u(x)v(x)}{\pi(x)},
\qquad u,v\in T_\pi\mathcal P(\mathcal X).
\end{equation}
Pulling this metric back through \(\pi:\Theta\to\mathcal P(\mathcal X)\) gives
\[
g_\theta(u,v)
:=g^{\mathrm{FR}}_{\pi_\theta}
\bigl(\mathrm d\pi_\theta[u],\mathrm d\pi_\theta[v]\bigr)
=\sum_{x\in\mathcal X}
\frac{\mathrm d\pi_\theta(x)[u]\,\mathrm d\pi_\theta(x)[v]}{\pi_\theta(x)}.
\]
Let \(J_\pi(\theta)\) denote the Jacobian of \(\pi\). The pullback metric has matrix
\[
G_\theta
=J_\pi(\theta)^\top\diag(1/\pi_\theta)J_\pi(\theta)
=\mathbb E_{\pi_\theta}
\left[
\nabla_\theta\log\pi_\theta(x)
\nabla_\theta\log\pi_\theta(x)^\top
\right],
\]
so that \(g_\theta(u,v)=\langle u,G_\theta v\rangle\). Consequently,
\[
\grad_g f(\theta)=G_\theta^\dagger\nabla_\theta f(\theta),
\]
where \(G_\theta^\dagger\) is the pseudoinverse. Applying this relation to~\eqref{eq:pdlgradriem} gives the Fisher-Rao gradient estimate
\begin{equation}
\label{eq:frgrad}
\hat g_t
=\frac{1}{K}\sum_{k=1}^K
\partial_\pi D(\pi_{\theta_t},\mathbb P_\tau)(x_k)
G_{\theta_t}^\dagger
\nabla_\theta\log\pi_{\theta_t}(x_k),
\qquad x_k\sim\pi_{\theta_t}.
\end{equation}
Equivalently, given a Euclidean gradient estimate \(g_t\), one obtains \(\hat g_t\) by solving \(G_{\theta_t}\hat g_t=g_t\), followed by the update
\(
\theta_{t+1}=R_{\theta_t}(-\eta_t\hat g_t).
\)
The following proposition shows that, for factorized distributions, this linear system decomposes across variables.

\begin{proposition}
\label{prop:fisher-block}
Suppose \(\pi_\theta(x)=\prod_{i\in[n]}\pi_{\theta_i}(x_i)\). Then \(G_\theta\) is block diagonal, with the \(i\)-th block
\[
G_i
=\mathbb E_{\pi_{\theta_i}}
\left[
\nabla_{\theta_i}\log\pi_{\theta_i}(x_i)
\nabla_{\theta_i}\log\pi_{\theta_i}(x_i)^\top
\right].
\]
\end{proposition}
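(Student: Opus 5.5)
The statement to prove is Proposition~\ref{prop:fisher-block}: for a factorized $\pi_\theta$, the Fisher matrix $G_\theta$ is block diagonal with blocks $G_i$. The plan is to compute $G_\theta$ directly from the representation $G_\theta=\mathbb E_{\pi_\theta}[\nabla_\theta\log\pi_\theta(x)\nabla_\theta\log\pi_\theta(x)^\top]$ derived just before the statement. Throughout, $\theta=(\theta_1,\ldots,\theta_n)$ is partitioned into blocks, and the $i$-th factor $\pi_{\theta_i}$ depends only on $\theta_i$. As in Theorem~\ref{thm:pdl-gradient}, we assume $\pi_\theta$ is fully supported, so that all logarithms and scores are well defined.

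\textbf{Step 1: block structure of the score.} Taking logarithms of the factorization gives $\log\pi_\theta(x)=\sum_{i\in[n]}\log\pi_{\theta_i}(x_i)$. Each summand depends only on the block $\theta_i$. Therefore the score $\nabla_\theta\log\pi_\theta(x)$ is the stacked vector whose $i$-th block is $s_i(x_i):=\nabla_{\theta_i}\log\pi_{\theta_i}(x_i)$. It follows that the $(i,k)$ block of $G_\theta$ equals $\mathbb E_{\pi_\theta}[s_i(x_i)s_k(x_k)^\top]$.

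\textbf{Step 2: diagonal blocks.} When $i=k$, the integrand depends only on $x_i$. Under $\pi_\theta$ the marginal of $x_i$ is $\pi_{\theta_i}$, so this block reduces to $\mathbb E_{\pi_{\theta_i}}[s_i s_i^\top]=G_i$, as claimed.

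\textbf{Step 3: off-diagonal blocks.} When $i\neq k$, the coordinates $x_i$ and $x_k$ are independent under the product distribution. Hence the $(i,k)$ block factorizes as $\mathbb E_{\pi_{\theta_i}}[s_i]\,\mathbb E_{\pi_{\theta_k}}[s_k]^\top$. This product vanishes by the zero-mean property of the score:
\[
\mathbb E_{\pi_{\theta_i}}[s_i]=\sum_{x_i}\nabla_{\theta_i}\pi_{\theta_i}(x_i)=\nabla_{\theta_i}\sum_{x_i}\pi_{\theta_i}(x_i)=\nabla_{\theta_i}1=0.
\]
The interchange of differentiation and summation is justified because the sum is finite, since $\mathcal X$ is finite.

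The only point that needs care is this score identity. It requires that each factor sums to one for every value of $\theta_i$, so that the gradient of the constant $1$ is zero. This holds by construction for the parameterizations in Section~\ref{sec:param}, each of which is explicitly normalized. Once the identity is in place, the off-diagonal blocks are zero and $G_\theta=\operatorname{diag}(G_1,\ldots,G_n)$, which completes the plan.
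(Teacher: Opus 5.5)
Your proof is correct and is essentially the paper's argument. The score splits into the blocks $s_i(x_i)$, and each off-diagonal block factors by independence into $\mathbb E_{\pi_{\theta_i}}[s_i]\,\mathbb E_{\pi_{\theta_k}}[s_k]^\top=0$ via the zero-mean score identity; your Step~2 just makes explicit the diagonal-block reduction that the paper leaves implicit.

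One small caveat on your closing remark. In the discrete Gaussian mixture the weights $\gamma_i$ are normalized only on the simplex, so in ambient coordinates $\mathbb E_{\pi_{\theta_i}}[\nabla_{\gamma_i}\log\pi_{\theta_i}(x_i)]=\mathbf 1$, and the zero-mean identity (hence block diagonality) holds only along tangent, sum-zero directions. The paper's proof also leaves this assumption implicit.
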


\begin{proof}
Let \(s_i(x_i):=\nabla_{\theta_i}\log\pi_{\theta_i}(x_i)\). For \(i\neq j\), independence gives
\[
\mathbb E_{\pi_\theta}[s_i(x_i)s_j(x_j)^\top]
=\mathbb E_{\pi_{\theta_i}}[s_i(x_i)]
\mathbb E_{\pi_{\theta_j}}[s_j(x_j)]^\top=0,
\]
because
\(
\mathbb E_{\pi_{\theta_i}}[s_i(x_i)]
=\sum_{x_i}\nabla_{\theta_i}\pi_{\theta_i}(x_i)=0.
\)
Thus, every off-diagonal block is zero.
\end{proof}

The blocks have simple forms for the parameterizations in Section~\ref{sec:param}. For product-Bernoulli,
\(
G_i=1/(\theta_i(1-\theta_i))
\)
and
\(
G_\theta^\dagger=\diag(\theta\odot(1-\theta)).
\)
For product-logit,
\[
G_i=\diag(\pi_{\theta_i})-\pi_{\theta_i}\pi_{\theta_i}^\top,
\]
so computing the update requires solving \(n\) independent systems, with \(u_i-l_i+1\) variables in the \(i\)-th system. For quadratic-logit,
\[
G_i=
\begin{bmatrix}
\mathrm{Var}_{\pi_{\theta_i}}(x_i)
&-\mathrm{Cov}_{\pi_{\theta_i}}(x_i,x_i^2)\\
-\mathrm{Cov}_{\pi_{\theta_i}}(x_i,x_i^2)
&\mathrm{Var}_{\pi_{\theta_i}}(x_i^2)
\end{bmatrix},
\]
which can be evaluated from the first four moments of the one-dimensional marginal. More generally, the same construction applies to other geometries by deriving a local metric on \(\mathcal P(\mathcal X)\), pulling it back to \(\Theta\), and computing the associated gradient through~\eqref{eq:rgrad}.




%% file: sections/11_ls.tex
\section{PDL with Terminal Local Search for Case Study~2}
\label{sec:ls}

PDL can also incorporate customized local search as a post-processing step. We illustrate this through Case Study 2. We keep the top \(100\) incumbent assignments during the PDL run and, after termination, apply a parallel swap-based local search to improve these incumbents. 

\input{tables/cs2_expt2.tex}

Table~\ref{tab:cs2_expt2} evaluates this local search applied to the candidates returned by IPL and SPL. Local search consistently improves both variants, with the improvement generally increasing with problem size. Notably, SPL requires relatively little refinement on the smaller instances, with objective reductions of only \(0.61\%\), \(1.18\%\), \(3.10\%\), and \(5.24\%\) through \(n=512\), compared with \(15\%\)-\(23\%\) for IPL. This suggests that the more expressive SPL parameterization already produces solutions of relatively high quality.
The additional local search cost also remains computationally cheap relative to the PDL solve time. These results suggest that terminal local search can effectively complement either parameterization, while allowing the simpler IPL to recover much of the solution-quality advantage of SPL.

%% file: tables/cs2_expt2.tex
\begin{table}[!htbp]
\centering
\scriptsize
\begin{tabular}{r rr rr rr}
\toprule
$n$
& \multicolumn{4}{c}{Objective}
& \multicolumn{2}{c}{Local-search time (s)} \\
\cmidrule(lr){2-5}\cmidrule(lr){6-7}
& \multicolumn{2}{c}{IPL-LS}
& \multicolumn{2}{c}{SPL-LS}
& IPL-LS
& SPL-LS \\
\cmidrule(lr){2-3}\cmidrule(lr){4-5}
& Value & Impr. (\%)
& Value & Impr. (\%)
& & \\
\midrule
64
& 765
& -15.09
& \textbf{755}
& -0.61
& 0.04
& \textbf{0.03} \\
128
& 1,517
& -18.67
& \textbf{1,511}
& -1.18
& 0.10
& \textbf{0.03} \\
256
& \textbf{2,989}
& -21.52
& 2,991
& -3.10
& 1.14
& \textbf{0.24} \\
512
& \textbf{5,905}
& -23.47
& 5,914
& -5.24
& 11.59
& \textbf{2.92} \\
768
& 8,799
& -24.38
& \textbf{8,794}
& -19.34
& 40.78
& \textbf{30.50} \\
1,024
& \textbf{11,684}
& -25.04
& 11,687
& -24.62
& 100.44
& \textbf{97.35} \\
\bottomrule
\end{tabular}

\caption{Performance of IPL and SPL variants for Case Study~2 with terminal local search. Impr. denotes percentage objective change; negative values indicate reductions. Bold values mark the lowest objective and shortest local search time in each row.}

\label{tab:cs2_expt2}
\end{table}